\documentclass[12pt,letterpaper,titlepage,reqno]{amsart}
\usepackage{amsmath, amssymb, amsthm, amsfonts,amscd,amsaddr,enumerate}

\usepackage[
    paper=a4paper,
    portrait=true,
    textwidth=425pt,
    textheight=650pt,
         tmargin=3cm,
    marginratio=1:1
            ]{geometry}

\theoremstyle{plain}
\newtheorem{theorem}{Theorem}[section]
\newtheorem{proposition}[theorem]{Proposition}

\newtheorem{prop}[theorem]{Proposition}
\newtheorem{lemma}[theorem]{Lemma}

\theoremstyle{definition}

\newtheorem{rmk}[theorem]{Remark}
\numberwithin{equation}{section}
\newtheorem*{theoremA*}{Theorem A}
\newtheorem*{theoremB*}{Theorem B}
\newtheorem*{theorem1*}{Theorem A'}
\newtheorem*{theoremC*}{Theorem C}
\newtheorem*{theoremD*}{Theorem D}
\newtheorem*{theoremE*}{Theorem E}
\newtheorem*{theoremF*}{Theorem F}
\newtheorem*{theoremE2*}{Theorem E2}
\newtheorem*{theoremE3*}{Theorem E3}
\newcommand{\bs}{\backslash}

\newcommand{\C}{\mathbb{C}}

\newcommand{\Hc}{\mathcal{H}}

\newcommand{\Z}{\mathbb{Z}}

\newcommand{\Sc}{\mathcal{S}}
\newcommand{\Oc}{\mathcal{O}}

\newcommand{\Mf}{\mathfrak{M}}

\newcommand{\R}{\mathbb{R}}
\newcommand{\N}{\mathbb{N}}

\newcommand{\Sl}{\operatorname{SL}}

\newcommand{\Ind}{\operatorname{Ind}}
\newcommand{\id}{\operatorname{id}}
\newcommand{\SO}{\operatorname{SO}}
\newcommand{\PW}{\operatorname{PW}}

\newcommand{\Hom}{\operatorname{Hom}}
\newcommand{\End}{\operatorname{End}}

\newcommand{\OO}{\operatorname{O}}
\newcommand{\GL}{\operatorname{GL}}
\newcommand{\SL}{\operatorname{SL}}

\newcommand{\SU}{\operatorname{SU}}

\newcommand{\im}{\operatorname{Im}}

\newcommand{\Lie}{\operatorname{Lie}}
\newcommand{\Ad}{\operatorname{Ad}}

\newcommand{\ad}{\operatorname{ad}}
\newcommand{\diag}{\operatorname{diag}}

\newcommand{\Spec}{\operatorname{spec}}

\newcommand{\Sym}{\operatorname{Sym}}

\newcommand{\re}{\operatorname{Re}}

\def\cP{\mathcal{P}}
\def\hat{\widehat}
\def\af{\mathfrak{a}}

\def\gf{\mathfrak{g}}

\def\ef{\mathfrak{e}}

\def\kf{\mathfrak{k}}

\def\mf{\mathfrak{m}}
\def\nf{\mathfrak{n}}

\def\sf{\mathfrak{s}}

\def\so{\mathfrak{so}}

\def\su{\mathfrak{su}}
\def\tf{\mathfrak{t}}

\def\zf{\mathfrak{z}}
\def\la{\langle}
\def\ra{\rangle}
\def\1{{\bf1}}

\def\U{\mathcal{U}}
\def\Ac{\mathcal{A}}

\def\Cc{\mathcal{C}}
\def\D{\mathcal {D}}
\def\Ic{\mathcal {I}}

\def\Oc{\mathcal{O}}
\def\P{\mathbb{P}}

\def\oline{\overline}
\def\F{\mathcal{F}}

\def\tilde{\widetilde}

\def\Sym{\mathrm{Sym}}
\def\tilde{\widetilde}
\def\Diff{\mathbb{D}}

\hyphenation{hy-per-geo-me-tric}
\def\oline{\overline}
\def\la{\langle}
\def\ra{\rangle}

\title[Paley-Wiener theorem]
{A Paley-Wiener theorem for Harish-Chandra modules}
\begin{document}

\begin{abstract}
We formulate and prove a Paley-Wiener theorem for Harish-Chandra modules for a real reductive group. As a corollary we obtain a new and elementary proof of the Helgason conjecture.
\end{abstract}

\author[Gimperlein]{Heiko Gimperlein}
\address{Maxwell Institute for Mathematical Sciences and Department of Mathematics, Heriot--Watt University \\ Edinburgh, EH14 4AS, United Kingdom \\
 {\tt h.gimperlein@hw.ac.uk}}

\author[Kr\"otz]{Bernhard Kr\"otz}
\address{Institute f\"ur Mathematik\\
Universit\"at Paderborn\\Warburger Str. 100,
D-33098 Paderborn \\ {\tt bkroetz@gmx.de}}

\author[Kuit]{Job J.~Kuit}
\address{Institut f\"ur Mathematik\\
Universit\"at Paderborn\\Warburger Str. 100,
D-33098 Paderborn \\ {\tt jobkuit@math.upb.de}}

\author[Schlichtkrull]{Henrik Schlichtkrull}
\address{Department of Mathematics\\
University of Copenhagen\\Universitetsparken 5 \\
DK-2100 Copenhagen \O\\
{\tt schlicht@math.ku.dk}}

\maketitle
\section{Introduction}
Let $G$ be a real reductive algebraic group and $K\subset G$ a maximal compact
subgroup. Let $V$ be a Harish-Chandra module for $(\gf, K)$ where $\gf=\Lie(G)$.
\par Every Harish-Chandra module admits a completion (globalization) to a representation of $G$. Such a completion is in general not unique. First and foremost is the smooth completion $V^\infty$ of moderate growth,
due to Casselman-Wallach, which is unique up to isomorphism, see \cite{C}, \cite[Sect. 11]{W2}
and \cite{BK}. Another completion is the $G$-module $V^{\omega}$ of analytic vectors in $V^{\infty}$ with its natural compact-open topology.

Define the minimal
completion  of $V$ by the convolution product
$$ V_{\rm min} :=C_c^\infty(G)*V\subset V^\infty$$
and endow it with a topology as follows: take a finite dimensional subspace $V_f\subset V$
which generates $V$, and consider the surjective map
$$C_c^\infty(G)\otimes V_f \twoheadrightarrow V_{\rm min}\,.$$
The quotient topology on $V_{\rm min}$ does not depend on the choice of the finite dimensional
generating subspace $V_f$ and thus induces a natural quotient
Hausdorff locally convex topology on $V_{\rm min}$.
It is inherent in the construction that $V_{\rm min}$ embeds equivariantly and
continuously into every completion of $V$, hence the terminology.

Next we review Schmid's interpretation \cite{S} of the Helgason conjecture. The conjecture was stated in \cite{H1} and first proven in \cite{K6}.
Let $\chi$ be a character of the algebra $\Diff(G/K)$ of $G$-invariant differential operators on $G/K$. The Helgason conjecture states that the Poisson transform for $G/K$ is an isomorphism between the space of hyperfunction sections of a line bundle over the minimal boundary of $G/K$ and the space $C^{\infty}(G/K)_{\chi}$ of joint eigenfunctions of $\Diff(G/K)$ with eigencharacter $\chi$.
Then Schmid's interpretation and extension of the Helgason conjecture is
\begin{equation} \label{min is omega}
V_{\rm min}
= V^\omega
\end{equation}
as topological vector spaces, for all Harish-Chandra modules $V$. The equality (\ref{min is omega}) was stated in \cite[Theorem on p. 317]{S} and \cite[Theorem 2.12]{KashiwaraSchmid}.

\par The objective of this work is to understand the equality \eqref{min is omega} quantitatively. For that
let $G=KAN$ be an Iwasawa decomposition of $G$ and $G=KAK$ the associated Cartan
decomposition. Let $\|\cdot\|$ be a Cartan-Killing norm on $\gf$, and define balls
$A_R\subset A$ for any $R>0$ by $A_R =\exp(\af_R)$ and
$\af_R:=\{ X\in\af\mid \|X\|\leq R\}$.  This gives us a family of balls
$B_R:= K A_R K\subset G$, and we write $C_R^\infty(G)\subset C_c^\infty(G)$ for the subspace of functions with support in $B_R$. We define
$$V^{\rm min}_R:= C_R^\infty(G)*V$$
and endow it with the quotient topology.
Note that
\begin{equation} \label{filter one}
V_{\rm min}
= \varinjlim_{R\to \infty} V^{\rm min}_R
\end{equation}
as inductive limit of Fr\'echet spaces.

\par Next we consider the analytic filtration
of $V^\omega$. We recall that a vector $v\in V^\infty$ is analytic if and only if it is
$K$-analytic, i.e.~the restricted orbit map
$$f_v : K \to V^\infty, \ \ k\mapsto k\cdot v$$
is analytic (see Lemma \ref{lemma K-omega}).  Now for any
$r>0$ we define a $K$-bi-invariant domain of  $K_\C$ by $K_\C(r):= K \exp(i\kf_r)$, where
$$
\kf_r
=\{X\in \kf\mid \|X\|<r\}.
$$
We define $V_{r}^\omega\subset V^\omega$ to be the subspace of those $v$ for which $f_v$ extends holomorphically to $K_\C(r)$ and endow it with the topology of uniform convergence on compacta in $K_{\C}(r)$. We then obtain the analytic filtration  as inductive limit of Fr{\'e}chet spaces
\begin{equation}\label{filter two}
V^\omega
=\varinjlim_{r\to 0} V_r^\omega\, .
\end{equation}
By  (\ref{min is omega}) and the Grothendieck factorization theorem \cite[Ch.~4, Sect.~5, Th.~1]{Gr} (see also \cite[Corollary 24.35]{MV}) the two filtrations \eqref{filter one}
and \eqref{filter two} are continuously sandwiched into each other, i.e.:
\medbreak
\begin{enumerate}[]
\item{\em Geometric inclusion:} For all $R>0$ there exists $r=r(R)>0$ with $V^{\rm min}_R\subset V_{r}^\omega$.
\vspace{6pt}
\item{\em Analytic inclusion:} For all $r>0$ there exists $R=R(r)>0$ with $V_r^\omega\subset V^{\rm min}_{R}$.
\end{enumerate}
\vspace{6pt}
Observe that the equality (\ref{min is omega}) is a consequence of the analytic inclusion.
\medbreak
By a {\em Paley-Wiener type theorem for a Harish-Chandra module $V$} we understand the existence of the geometric and analytic inclusions together with bounds on the numbers $r(R)$ and $R(r)$. In this article we prove such a theorem.

To explain the terminology, we consider the following algebraic type of  Fourier transform
$$
\F=\bigoplus_{V\in\mathcal{HC}}\F_{V}: C_{c}^{\infty}(G)\to \bigoplus_{V\in \mathcal{HC}}\Hom_{(\gf,K)}(V,V^{\omega}),
$$
that is given by
$$
\F_{V}\phi(v)=\phi* v\qquad(V\in\mathcal{HC}, v\in V ).
$$
Here $\mathcal{HC} $ is the category of Harish-Chandra modules. A complete Paley-Wiener theorem would be a description of the image under $\F$ of the filtration of $C_{c}^{\infty}(G)$. A step towards that is the localized version, i.e.~ for a fixed $V\in\mathcal{HC}$ a description for the image under $\F_{V}$ of the filtration of $C_{c}^{\infty}(G)$ in terms of the filtration on $\Hom_{(\gf,K)}(V,V^{\omega})$ induced from $V^{\omega}$. Optimal estimates of $r(R)$ and $R(r)$ determining the geometric and analytic inclusions are an interesting open problem, even for groups of rank $1$.

\subsection{Geometric inclusion} What we termed  geometric inclusion
has a straightforward relation to a problem concerning the complex geometry of the
$G$-invariant crown domain $\Xi\subset Z_\C=G_\C/K_\C$ of the attached Riemannian symmetric space
$Z=G/K$. The crown domain $\Xi$ was first defined in \cite{AG} as in \eqref{crown1}-\eqref{crown2}  below and characterized
as the largest $G$-domain $Z\subset \Xi \subset Z_\C$ on which $G$ acts properly. If $z_0=K_\C$ is the standard base point, then the crown domain
can alternatively be defined as the connected component of the intersection
$$\bigcap_{g\in G} g N_\C A_\C\cdot z_0 = \bigcap_{k\in K} k N_\C A_\C\cdot z_0$$
which contains $z_0$. The latter can also be rephrased by $\Xi\subset Z_\C$ being the maximal $G$-invariant domain
containing $Z$ such that for every $K$-spherical principal series representation $V=V_\lambda$ with $\lambda\in\af_{\C}^{*}$ and non-zero $K$-spherical vector
$v_K=v_{K,\lambda}$ the orbit map
$$f_\lambda: G/K\to V_\lambda^\infty, \ \ gK\mapsto \pi_\lambda(g) v_{K,\lambda}$$
extends as a holomorphic map to $\Xi\to V_\lambda^\infty$. (See \cite{K-S1} and \cite{KS} for the fact that every
$f_\lambda$ extends holomorphically to $\Xi$, and \cite[Sect. 4]{KO} for the fact that $\Xi$ is maximal with respect to this
property.)

Given $R>0$ we define an $\Ad(K)$-invariant open subset in $\kf$ by
$$ \kf(R):= \{ X \in \kf\mid \exp(iX) B_R\cdot z_0\subset \Xi\}_0\ ,$$
with the subscript indicating the connected component which contains $0\in \kf$.

\begin{prop}
The following assertions hold.
\begin{enumerate}[(i)]
\item\label{Prop geometric inclusion 1} For any $r>0$ with $\kf_r\subset \kf(R)$ we have a continuous embedding $V^{\rm min}_R\subset
V_r^\omega$.
\item\label{Prop geometric inclusion 2} There exist constants $c, C>0$ so that
\begin{equation}\label{eq bound on k(R)}
\kf_r \subset \kf(R)\quad\text{if}\quad  r < C e^{-cR}.
\end{equation}
\end{enumerate}
\end{prop}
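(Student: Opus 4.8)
The plan is to prove the two parts by rather different means: part (i) by importing the holomorphic extension of matrix coefficients to the crown domain $\Xi$, and part (ii) by an elementary estimate on the operator norm of $\Ad$ over the ball $A_R$.

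For (i), fix $r>0$ with $\kf_r\subset\kf(R)$. It suffices to show that $\phi*v\in V_r^\omega$ for every $\phi\in C_R^\infty(G)$ and every $v\in V$, and that the induced linear map $V_R^{\rm min}\to V_r^\omega$ is continuous. Unwinding the orbit map gives $\pi(k)(\phi*v)=\int_{B_R}\phi(h)\,\pi(kh)v\,dh$ for $k\in K$, where we use that $B_R$ is $K$-bi-invariant. Since $K_\C(r)=K\exp(i\kf_r)$ is left $K$-invariant and $\exp(i\kf_r)B_R\cdot z_0\subset\Xi$ by hypothesis, every product $kh$ with $k\in K_\C(r)$, $h\in B_R$ lies in the lift $\widetilde\Xi=\{g\in G_\C\mid g\cdot z_0\in\Xi\}$ of the crown to $G_\C$. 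Next I would invoke the holomorphic extension theorem for the crown (\cite{K-S1}, \cite{KS}; for a general Harish-Chandra module one may first embed $V^\infty$ into a minimal principal series via Casselman's subrepresentation theorem, transport the statement, and sum over $K$-types): for every $\eta\in V^{-\infty}$ the matrix coefficient $g\mapsto\langle\pi(g)v,\eta\rangle$ extends holomorphically to $\widetilde\Xi$ and is bounded on each compact subset, uniformly over bounded sets of $\eta$. Substituting $g=kh$ and integrating against $\phi$ yields a holomorphic extension of $k\mapsto\langle\pi(k)(\phi*v),\eta\rangle$ to $K_\C(r)$ for every $\eta$; weak holomorphy then upgrades to holomorphy of $k\mapsto\pi(k)(\phi*v)$ as a $V^\infty$-valued map on $K_\C(r)$ (standard for maps into a Fr\'echet space), so $\phi*v\in V_r^\omega$. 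Continuity of $V_R^{\rm min}\to V_r^\omega$ follows from the closed graph theorem, since both are Fr\'echet spaces continuously included in $V^\infty$ and the map is the identity on underlying vectors, so its graph is closed.

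For (ii), write $B_R=KA_RK$ and use the $G$-invariance of $\Xi$ together with the fact that $z_0$ is $K_\C$-fixed. Given $X\in\kf$ and $b=k_1\exp(H)k_2\in B_R$ with $\|H\|\le R$, one computes
\[
 \exp(iX)\,b\cdot z_0=\exp(iX)k_1\exp(H)\cdot z_0=k_1\exp(H)\exp(iW)\cdot z_0,\qquad W:=\Ad(\exp(-H))\Ad(k_1^{-1})X,
\]
so by $G$-invariance this point lies in $\Xi$ if and only if $\exp(iW)\cdot z_0\in\Xi$. Here $\Ad(k_1^{-1})$ is orthogonal and $\Ad(\exp(-H))=e^{-\ad H}$ has operator norm at most $C_1e^{C_0R}$, with $C_0=\max_\alpha\|\alpha\|$ coming from the eigenvalues $0,\pm\alpha(H)$ of $\ad H$ and $C_1$ a constant comparing a root-adapted basis with the Cartan-Killing norm; hence $\|W\|\le C_1e^{C_0R}\|X\|$. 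As $\Xi$ is open and contains $z_0$, fix $\delta>0$ with $\exp(iW)\cdot z_0\in\Xi$ whenever $\|W\|<\delta$. Taking $c>C_0$ and $C\le\delta/C_1$, the bound $r<Ce^{-cR}$ forces $\|W\|<\delta$ for every $X$ with $\|X\|<r$, so $\exp(iX)B_R\cdot z_0\subset\Xi$; since $\kf_r$ is a connected neighbourhood of $0$ contained in $\{X\in\kf\mid\exp(iX)B_R\cdot z_0\subset\Xi\}$, this yields $\kf_r\subset\kf(R)$.

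The substantive point lies in (i): beyond invoking the crown-extension theorem, the real work is the bookkeeping needed to pass from the scalar holomorphic extensions of matrix coefficients to a genuinely $V^\infty$-valued holomorphic orbit map — verifying that the bounds produced by the extension theorem are exactly of the type the weak-to-strong holomorphy and continuity arguments require, and checking the reduction to principal series. Part (ii), once the $\Ad$-conjugation identity reduces everything to a small imaginary perturbation of the base point, is routine.
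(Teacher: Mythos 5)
Your proposal is correct, and part (ii) in particular takes a genuinely different route from the paper. For (i) the underlying mechanism is the same as the paper's: the hypothesis $\kf_r\subset\kf(R)$ gives $K_\C(r)B_R\subset\tilde\Xi$, and the whole content is that orbit maps of $K$-finite vectors extend holomorphically to the crown. The paper, however, does not import this from the literature: it reduces to a spherical principal series via a \emph{quotient} $V_\lambda\otimes F\twoheadrightarrow V$ (matrix coefficients of $F$ being entire on $G_\C$) and then verifies the extension directly in the compact picture, writing $w(g^{-1}k)=w(\mathbf{k}(g^{-1}k))\,\mathbf{a}(g^{-1}k)^{-i\lambda-\rho}$ and using that $\mathbf{a},\mathbf{k}$ are holomorphic on $\tilde\Xi^{-1}\subset K_\C A_\C N_\C$ while $w\in\C[K_\C/M_\C]$ is polynomial; holomorphy of the $V^\infty$-valued extension and continuity then come for free from this formula. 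Your route --- Casselman subrepresentation, crown-extension theorem for matrix coefficients, weak-to-strong holomorphy, closed graph --- works, but be aware that the cited results of Kr\"otz--Stanton and Kr\"otz--Schlichtkrull are stated for the $K$-spherical vector, so the ``black box'' you invoke for arbitrary $K$-finite $v$ and arbitrary $\eta\in V^{-\infty}$ is exactly the compact-picture computation the paper carries out; your proof is therefore not shorter, only differently packaged. For (ii) the paper argues quite differently: it embeds $G$ into $\GL(n,\R)$ and uses the explicit square-root domain $\Sym(n,\R)^++i\Sym(n,\R)\subset\Xi$ together with an eigenvalue estimate. Your argument --- conjugating $\exp(iX)$ across $k_1\exp(H)$ to get $W=\Ad(\exp(-H)k_1^{-1})X$ with $\|W\|\le e^{C_0R}\|X\|$ (note $W\in\gf$, not $\kf$, which is harmless since you only use openness of $\Xi$ at $z_0$ in all of $\exp(i\gf)\cdot z_0$) and then quoting $G$-invariance and openness of $\Xi$ --- is more elementary and intrinsic, avoids the matrix realization entirely, and yields the same bound $r<Ce^{-cR}$; its only cost is that the constant $\delta$, hence $C$, is non-explicit, whereas the paper's computation produces concrete constants in the $\GL(n,\R)$ picture.
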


Assertion (\ref{Prop geometric inclusion 1}) is Proposition \ref{prop geom}; assertion (\ref{Prop geometric inclusion 2}) is Proposition \ref{prop suboptimal r}.

It is an interesting problem to determine $\kf(R)$ explicitly, and we do so for two examples in Appendix \ref{Appendix A}. The results
in the appendix suggest that the bound (\ref{eq bound on k(R)}) is sharp modulo the constants $c, C>0$.

\subsection{Analytic inclusion}
We now address the  more interesting and much more difficult part, namely the analytic inclusion, i.e.~to find for given $r>0$ an
$R=R(r)>0$ such that  $V_r^\omega\subset V_{R(r)}^{\rm min}$.
The main theorem of this paper is (see Theorem \ref{main theorem} with Remark \ref{rmk intro proof}):

\begin{theorem} \label{intro theorem} Let $G$ be a real reductive group and $V$ be a Harish-Chandra module. Then there exist
constants $c>0$ and $R_0>0$, only depending on $G$, with the following property:
Given $r>0$, then for all $R>R_0$ satisfying
$$ \frac{(\log R)^2}{R^2} < c r $$
one has a continuous embedding
$$V_{r}^\omega\subset V_R^{\rm min}\, .$$
\end{theorem}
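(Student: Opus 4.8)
The plan is to prove the analytic inclusion by constructing, for a given analytic vector $v \in V_r^\omega$, an explicit compactly supported function $\phi \in C_R^\infty(G)$ with $\phi * v_0 = v$ for some fixed generator $v_0$, where the support bound $R$ is controlled in terms of $r$. The starting point is that $v$ being in $V_r^\omega$ means the $K$-orbit map $f_v$ extends holomorphically to $K_\C(r) = K\exp(i\kf_r)$. The key idea is to reverse the convolution: one knows abstractly that $v = \psi * v_0$ for \emph{some} $\psi \in C_c^\infty(G)$ (by the definition of $V_{\rm min}$ and the fact that $V_r^\omega \subset V_{\rm min}$), but one needs quantitative control of the support. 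I would obtain $\phi$ by a contour-shift / Cauchy-type argument: expand $v$ in its $K$-isotypic components $v = \sum_{\delta \in \hat K} v_\delta$, use that the holomorphic extension of $f_v$ to $K_\C(r)$ forces the $K$-types $v_\delta$ to decay like $e^{-r\,|\delta|}$ (where $|\delta|$ is the highest weight norm, via the standard estimate for matrix coefficients of finite-dimensional representations of $K_\C$ on the domain $\exp(i\kf_r)$), and then realize each $v_\delta$ as a convolution against a bi-$K$-finite kernel whose support in $A$ can be taken of radius $\lesssim$ (something)$\times |\delta|$.

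The main analytic engine, and the step I expect to be the hardest, is the following. For each $K$-type $\delta$ one needs a function $\phi_\delta \in C_c^\infty(G)$, supported in $B_{R_\delta}$, with $\phi_\delta * v_0$ producing (a fixed amount of) the $\delta$-component, together with \emph{operator-norm-type} bounds: $\|\phi_\delta\|$ in the relevant seminorm on $C_{R_\delta}^\infty(G)$ should grow at most polynomially, or at worst subexponentially, in $|\delta|$, while $R_\delta$ should grow no faster than $c'\log|\delta|$ — it is precisely this logarithmic growth of the support radius in the $K$-type that, combined with the exponential decay $e^{-r|\delta|}$ of the Fourier coefficients $v_\delta$, makes the series $\sum_\delta \phi_\delta$ converge in $C_R^\infty(G)$ for $R$ satisfying $(\log R)^2/R^2 < cr$. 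Heuristically: to build up a $K$-type of size $N = |\delta|$ inside $V_{\rm min}$ one needs convolution data reaching out to radius $\sim \log N$ in $G$ (this is the "elliptic" or "heat-type" smoothing estimate — the cost of producing high $K$-types from compactly supported data is logarithmic, not linear), so the convergence condition is that $e^{-rN}$ beats the norm growth at radius $R$, which after the change of variables $N \sim e^{R/c'}$ gives the stated inequality. Establishing this logarithmic support bound with good norm estimates is the technical heart; I would attempt it by working on the group, using the Cartan decomposition $G = KAK$ and the radial part of the Casimir/Laplacian, and by quantifying the analyticity of matrix coefficients of the (finite-dimensional, but large) $K$-representations appearing, perhaps via an explicit parametrix for a suitable elliptic operator like $1 - \Delta_K$ or a holomorphic heat semigroup $e^{-t\Delta_K}$ on $K$ with $t \sim r$.

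Once the family $\{\phi_\delta\}$ with these bounds is in hand, the remaining steps are more routine. First, verify that $\phi := \sum_{\delta} v_\delta\text{-coefficient} \cdot \phi_\delta$ converges in the Fréchet space $C_R^\infty(G)$: this is a comparison of the subexponential/polynomial norm growth of $\phi_\delta$ in $C_R^\infty(G)$ (for $R > R_0$ large enough that all the relevant $R_\delta \le R$, using $R_\delta \lesssim \log|\delta|$) against the exponential decay of the coefficients, and the condition $(\log R)^2/R^2 < cr$ is exactly what is needed to absorb the seminorm exponents. Second, check $\phi * v_0 = v$ by passing the convolution through the (now convergent) sum and using $\phi_\delta * v_0 = (\text{its }\delta\text{-part})$, which pins down $v$ by its $K$-isotypic decomposition. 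Third, upgrade this from a pointwise statement to the \emph{continuous} embedding $V_r^\omega \hookrightarrow V_R^{\rm min}$ by checking that the map $v \mapsto \phi$ is continuous from the Fréchet space $V_r^\omega$ (uniform convergence on compacta of $K_\C(r)$, which controls all the $v_\delta$ simultaneously) to $C_R^\infty(G)$, and then composing with the continuous surjection $C_R^\infty(G) \otimes V_f \twoheadrightarrow V_R^{\rm min}$; alternatively one invokes the closed graph / Grothendieck factorization theorem as in the introduction to deduce continuity from the set-theoretic inclusion once both sides are displayed as inductive limits of Fréchet spaces. The constants $c, R_0$ depend only on $G$ because everything above — the decay rate of $K$-types, the logarithmic support bound, the seminorm estimates — is expressed through the root data, the norm on $\kf$, and the elliptic estimates on $K$, none of which see $V$.
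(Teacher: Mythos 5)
Your first step (holomorphic extension to $K_\C(r)$ forces the $K$-isotypic components to decay like $e^{-r|\delta|}$) is correct and is exactly the paper's Lemma \ref{lemma K-analytic}, and the reduction to a cyclic spherical vector is also the right move. But the analytic engine you propose has a structural flaw that would sink the argument: you want to produce the $\delta$-component by convolving against a kernel $\phi_\delta$ supported in $B_{R_\delta}$ with $R_\delta\sim\log|\delta|$, and then sum over $\delta$. Since $R_\delta\to\infty$ with $|\delta|$, the supports of the $\phi_\delta$ are not uniformly bounded, so $\sum_\delta\phi_\delta$ is at best a rapidly decaying smooth function on $G$ and is \emph{not} compactly supported; the conclusion $v\in V_R^{\rm min}=C_R^\infty(G)*V$ for a finite $R$ does not follow, and truncating the sum at $|\delta|\sim e^{R/c'}$ leaves an untreated analytic tail. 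The quantitatively correct trade-off, which is what the paper actually establishes, is the opposite bookkeeping: one fixes a \emph{single} support radius $R$ for all $K$-types and accepts that the ``cost'' of producing the $K$-type $\tau$ from data supported in $B_R$ grows like $e^{\epsilon(R)|\tau|}$ with $\epsilon(R)\sim(\log R)^2/R^2$; this exponential cost is absorbed by the decay $e^{-r|\tau|}$ exactly when $(\log R)^2/R^2<cr$. Your heuristic $N\sim e^{R/c'}$ would instead give a condition of the shape $R\gtrsim\log(1/r)$, which is not the theorem's inequality, and producing a $K$-type of size $N$ with \emph{bounded} norms in fact requires $R\gtrsim\sqrt{N}\log N$, not $\log N$.

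The second gap is that the proposal never confronts the real obstruction: how to certify that a candidate convolution kernel is supported in $B_R$. The paper does this on the Fourier side via Helgason's Paley--Wiener theorem \eqref{H-PW}: one must build a holomorphic $C^\infty(K/M)$-valued function on $\af_\C^*$ of exponential type $R$ which interpolates $v$ at $\lambda_0$ \emph{and} satisfies the Weyl-group intertwining relations \eqref{I-relation}. Enforcing $W$-invariance is the main structural difficulty (handled via Kostant's $Q_\tau$-polynomials in rank one and via $W$-averaging against intertwining operators in general), and the quantitative heart is the $\Gamma$-function estimate of Proposition \ref{prop basic estimate}, which shows that multiplying a degree-$n$ polynomial by the damping factor $f_{n,R}$ costs only $e^{rn}$ on $\af^*$ under the hypothesis $(\log R)^2/R^2<cr$. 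Elliptic or heat-semigroup estimates on $K$ alone cannot substitute for this, since the support constraint lives entirely in the $A$-direction of the Cartan decomposition, not in the compact factor. As it stands, the proposal would need to be rebuilt around a fixed support radius and some Paley--Wiener-type characterization before it could yield the theorem.
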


As a corollary of this theorem we obtain Schmid's identity (\ref{min is omega}), and
we can view  Theorem \ref{intro theorem} as a new quantitative version of it. In Appendix \ref{Appendix B} we give a short derivation of the Helgason conjecture from (\ref{min is omega}). Finally, in Appendix \ref{Appendix C} we give
an application of our quantitative version to the  factorization of analytic eigenfunctions in terms of the Harish-Chandra
spherical function.

Let us now  explain the idea of the proof. Standard techniques reduce matters quickly to the case
when $V=V_\lambda$ is a principal series for which the $K$-spherical vector is cyclic (see Lemma \ref{lemma quotient}).
Our approach is based on the Paley-Wiener theorem of Helgason for the Fourier transform on $G/K$.
Let us briefly recall the statement.
Let $\PW(\af_\C^*, C^\infty(K/M))_R$
be the $C^\infty(K/M)$-valued Paley-Wiener space of holomorphic functions on the complexification $\af_\C^*$ of the Euclidean space $\af$ with growth rate $R$, see  \eqref{eq Paley-Wiener estimate} for
the formal definition. We realize $V_\lambda$ in the compact picture, where $V_\lambda^\infty = C^\infty(K/M)$
as $K$-modules, and  denote by $v_{K,\lambda}=\1_{K/M}$ the constant indicator function of $K/M$. It is then easy to see
that the spherical Fourier transform

$$ \F: C_{c}^{\infty}(G)\to \Oc(\af_\C^*,C^\infty(K/M)), \ \ f\mapsto (\lambda\mapsto \pi_\lambda(f) v_{K,\lambda})$$
satisfies
$$ \F (C_{R}^{\infty}(G))\subset \PW(\af_\C^*, C^\infty(K/M))_R\, .$$

Let $W$ be the Weyl group of $\Sigma(\gf,\af)$. For $w\in W$ we denote by
$$
J_{w,\lambda}: V_\lambda^\infty \simeq C^\infty(K/M)\to V_{w\lambda}^\infty\simeq C^\infty(K/M)
$$
the normalized (i.e.~fixing $\1_{K/M}$)  intertwining operator and recall that $\lambda \mapsto J_{w,\lambda}$ is meromorphic.
With that we obtain an action of $W$ on the space of $C^\infty(K/M)$-valued meromorphic functions,
$$ W\times \Mf(\af_\C^*, C^\infty(K/M))\to \Mf(\af_\C^*, C^\infty(K/M)), \ \ (w, f)\mapsto w \circ f\ ,$$
$$ (w\circ f)(\lambda) = J_{w, w^{-1}\lambda} f(w^{-1}\lambda)\qquad (\lambda \in \af_\C^*)\, .$$
In this framework Helgason's Paley-Wiener theorem \cite{H2} asserts that
$$ \F (C^{\infty}_R(G))= \PW_W(\af_\C^*, C^\infty(K/M))_R\ ,$$
where the subscript $W$ refers to invariant functions
for the action defined above.
However, from the geometric inclusion it follows that $\F(C_R^\infty(G))(\lambda) \subset
C_R^\infty(G)*V_\lambda \subset V_\lambda^\omega$. Thus we observe that the intertwining
relations force analyticity, i.e.~we have

$$\PW_W(\af_\C^*, C^\infty(K/M))_R = \PW_W(\af_\C^*, C^\omega(K/M))_R\ ,$$
and this observation was the motivation for our approach to the analytic inclusion.

Fix $\lambda_0\in \af_\C^*$ such that $V_{\lambda_0}$ is cyclic for the $K$-spherical vector.
We explicitly construct for any given analytic vector $v \in V_{\lambda_0}^\omega(r)$ a holomorphic function
$$f_v: \af_\C^* \to C^\infty(K/M)$$
such that its average
$$ \Ac(f_v):=\sum_{w\in W} w\circ f_v$$
lies in the Paley-Wiener space for a certain $R>0$ and such that $\Ac(f_v)(\lambda_0)=v$.
The Paley-Wiener theorem then yields that $v\in C_R^{\infty}(G)*V_{\lambda_0}$, proving the theorem.

We point out that our proof is in essence an $\Sl(2,\R)$-proof. More precisely, in Section \ref{Section construction} we provide a variety of estimates for products of $\Gamma$-functions, which lie at the core of the construction for $G=\Sl(2,\R)$. Given the framework provided by Kostant in \cite{Kos}, the general case of a reductive group $G$ is then a consequence of the one-variable estimates in Section \ref{Section construction}.

\section{Preliminaries}
Let $G$ be the real points of a connected algebraic reductive group defined over $\R$ and let $\gf$ be its Lie algebra. Subgroups of $G$ are denoted by capitals. The corresponding subalgebras are denoted by the corresponding fraktur letter. The unitary dual of a subgroup $S$ of $G$ we denote by $\hat{S}$.

 We denote by $\gf_\C=\gf\otimes_\R \C$ the complexification of $\gf$ and by $G_{\C}$ the group of complex points. We fix a Cartan involution $\theta$ and write $K$ for the maximal compact subgroup that is fixed by $\theta$.  We also write $\theta$ for the derived automorphism of $\gf$. We write $K_{\C}$ for the complexification of $K$, i.e. $K_{\C}$ is the subgroup of $G_{\C}$ consisting of the fixed points for the analytic extension of $\theta$.

The Cartan involution induces the infinitesimal Cartan decomposition
$\gf =\kf \oplus\sf$. Let $\af\subset\sf$ be a maximal abelian
subspace. Diagonalize $\gf$ under $\ad \af$ to obtain
the familiar root space decomposition
$$\gf=\af\oplus\mf\oplus \bigoplus_{\alpha\in\Sigma} \gf^\alpha\ ,$$
with $\mf=\zf_\kf(\af)$ as usual. Let $A$ be the connected subgroup of $G$ with Lie algebra $\af$ and let $M=Z_{K}(\af)$.
We fix an Iwasawa decomposition $G=KAN$ of $G$. We define the projections $\mathbf{k}:G\to K$ and $\mathbf{a}:G\to A$ by
$$
g\in \mathbf{k}(g)\mathbf{a}(g)N\qquad(g\in G).
$$
The set of restricted roots of $\af$ in $\gf$ we denote by $\Sigma$ and the positive system determined by the Iwasawa decomposition by $\Sigma^{+}$. We write $W$ for the Weyl group of $\Sigma$.

Let $\kappa$ be the Killing form on $\gf$ and let $\tilde\kappa$ be a non-degenerate $\Ad(G)$-invariant symmetric bilinear form on $\gf$ such that its restriction to $[\gf,\gf]$ coincides with the restriction of $\kappa$ and $-\tilde\kappa(\,\cdot\,,\theta\,\cdot\,)$ is positive definite. We write $\|\cdot\|$ for the corresponding norm on $\gf$.

\section{The complex crown of a Riemannian symmetric space}
The Riemannian symmetric space
$Z=G/K$ can be realized as a totally real subvariety of the Stein symmetric space
$Z_\C= G_\C/K_\C$:
$$ Z=G/K \hookrightarrow Z_\C, \ \ gK\mapsto gK_\C\, .$$
In the following we view $Z\subset Z_\C$ and write $z_0=K\in Z$ for the standard base point.

We define the subgroups $A_\C=\exp(\af_\C)$ and
$N_\C=\exp(\nf_\C)$ of $G_\C$.
We note that
$N_\C A_\C K_\C$ is a Zariski-open subset of
$G_\C$.
The maximal $G\times K_\C$-invariant domain in
$G_\C$ containing $e$ and contained in $N_\C A_\C K_\C$ is given by
\begin{equation} \label{crown1}
\tilde \Xi
= G\exp(i\Omega)K_\C\ ,
\end{equation}
where $\Omega=\{ Y\in \af\mid (\forall \alpha\in\Sigma)
\alpha(Y)<\pi/2\}$. Taking right cosets by $K_\C$, we obtain
the $G$-domain
\begin{equation}\label{crown2} \Xi:=\tilde \Xi/K_\C \subset Z_\C=G_\C/K_\C\ ,\end{equation}
commonly referred to as the {\it crown domain}. See
\cite{Gi} for the origin of the notion, \cite[Cor.~3.3]{KS} for the inclusion
$\tilde \Xi\subset  N_\C A_\C K_\C$  and \cite[Th.~4.3]{KO} for the maximality.

We recall that $\Xi$ is a contractible space. To be more precise, let $\hat\Omega=\Ad(K)\Omega$ and note that $\hat\Omega$ is an open convex subset of $\sf$. As a consequence of the Kostant convexity theorem it satisfies $\hat\Omega\cap\af=\Omega$ and $p_{\af}\hat\Omega=\Omega$, where $p_{\af}$ is the orthogonal projection $\sf\to\af$. The fiber map
$$
G\times_{K}\hat\Omega\to\Xi; \quad [g,X]\mapsto g\exp(iX)\cdot K_{\C}\ ,
$$
is a diffeomorphism by \cite[Prop.~4, 5 and 7]{AG}.  Since $G/K\simeq\sf$ and $\hat\Omega$ are both contractible, also $\Xi$ is contractible. In particular, $\Xi$ is simply connected.

\par We denote by ${\bf a}: G \to A$ the middle projection of the Iwasawa decomposition $G=KAN$ and note that
${\bf a}$ extends holomorphically to
$$
\tilde \Xi^{-1}
:=\{g^{-1}:g\in\tilde\Xi\}\ .
$$
Here the simply connectedness of $\Xi$ plays a role  to achieve ${\bf a}: \tilde \Xi^{-1}\to A_\C$ uniquely: A priori ${\bf a}$ is only defined as a map to  $A_\C/T_2$, where $T_2:= A_\C \cap K_\C$ is the $2$-torsion subgroup of group $A_\C$.
We denote the extension by the same symbol.

 Likewise one defines  $\mathbf{k}: G \to K$, which extends holomorphically to $\tilde \Xi^{-1}$ as well.

\par For $R>0$ we define a ball in $A$ by
$$A_R:=\{ \exp(Y)\mid Y\in \af, \ \|Y\|\leq R\}\, .$$
Related to that we define the ball $B_R\subset G$ by
$B_R=K A_R K$.
We consider the following subset of $\kf$:
\begin{eqnarray}\nonumber \kf(R)&:=\{ Y\in \kf\mid \exp(iY)K A_R \subset \tilde\Xi\}_0\\
\label{def k(R) 2} & =\{ Y\in \kf\mid \exp(iY)B_R\subset \tilde\Xi\}_0\, .
\end{eqnarray}

Note that $\kf(R)$ is open, because $KA_R\subset G$ is compact. Moreover, $\kf(R)$ is
$\Ad(K)$-invariant. Hence it is uniquely determined
by its intersection with a Cartan subalgebra $\tf$ of $\kf$, i.e.~$\kf(R)$  is determined by
$$
\tf(R):=\tf\cap \kf(R)\, .
$$
Actually, it is sufficient to consider the intersection with a closed chamber of $\tf$, say $\tf^+$:
$$\tf(R)^+:=\tf^+\cap \kf(R)\, .$$

For $r>0$ let
$\kf_r:=\{X\in \kf\mid \|X\|<r\}$ and define the domains in $K_{\C}$
$$
K_{\C}(r)
:=K\exp(i\kf_{r})\ .
$$
Note that $K_{\C}(r)$ is $K$-biinvariant, as $\kf_{r}$ is $\Ad(K)$-invariant. Note further that $K_{\C}(r)=\big(K_{\C}(r)\big)^{-1}$, since $\kf_{r}=-\kf_{r}$.
\par In general it is an interesting problem to determine $\kf(R)$ explicitly. We do this in Appendix \ref{Appendix A} for two cases, namely $\gf=\so(1,n)$ and $\gf=\su(1,1)$, the latter being treated in a way so that the
generalization to Hermitian symmetric spaces becomes apparent.
\par As a precise description of $\kf(R)$ may be difficult to obtain in general, one could instead determine the best possible
$r=r(R)>0$ with $\kf_{r}\subset \kf(R)$.
The following proposition gives a first bound which, given the results in Appendix \ref{Appendix A}, appears to be sharp
up to constants.

\begin{prop} \label{prop suboptimal r} There exist constants $C, c>0$ such that for all $r, R>0$  one has
$$
\kf_r
\subset \kf(R)\qquad (r < C e^{-cR}).
$$
\end{prop}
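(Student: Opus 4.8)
The plan is to use that $\tilde\Xi\subset G_\C$ is open, contains $e$, and is invariant under left translation by $G$ and right translation by $K_\C$ (in particular by $K$), and to replace a small left translation by $\exp(iY)$, $Y\in\kf$, by a small \emph{right} translation by an element $\exp(iX)$ with $X\in\gf$, which $\tilde\Xi$ absorbs; the price paid is a factor growing like $e^{cR}$ coming from $\Ad(A_R)$. First fix $\delta>0$ such that $\exp(iX)\in\tilde\Xi$ whenever $X\in\gf$ with $\|X\|<\delta$; this is possible because $X\mapsto\exp(iX)$ is continuous, $\exp(i\cdot 0)=e$, and $\tilde\Xi$ is open. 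For $Y\in\kf$, $k,k'\in K$ and $a=\exp(H)\in A_R$ (so $H\in\af$, $\|H\|\le R$), conjugating $\exp(iY)$ successively past $k$ and past $a$ gives
\[
\exp(iY)\,k\,a\,k' \;=\; (ka)\,\exp(iX)\,k', \qquad X:=\Ad(a^{-1})\Ad(k^{-1})Y\in\gf ,
\]
where $X\in\gf$ since $\Ad$ of an element of $G$ preserves $\gf$.

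The key quantitative input is the bound $\|\Ad(\exp(\pm H))\|_{\mathrm{op}}\le e^{cR}$ for $\|H\|\le R$, with $c$ depending on $G$ only. Since $H\in\af\subset\sf$, the operator $\ad H$ is self-adjoint for the inner product $-\tilde\kappa(\,\cdot\,,\theta\,\cdot\,)$ (because $(\ad X)^*=-\ad(\theta X)$ and $\theta H=-H$), so its eigenvalues are $0$ and the numbers $\alpha(H)$ with $\alpha\in\Sigma$; hence $\|\ad H\|_{\mathrm{op}}=\max_{\alpha\in\Sigma}|\alpha(H)|\le c\|H\|$ with $c:=\max_{\alpha\in\Sigma}\|\alpha\|$, and therefore $\|\Ad(\exp(\pm H))\|_{\mathrm{op}}=\|e^{\pm\ad H}\|_{\mathrm{op}}\le e^{cR}$. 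Combining this with the orthogonality of $\Ad(k^{-1})$ for $\|\cdot\|$ yields $\|X\|\le e^{cR}\|Y\|$. Thus, if $\|Y\|<Ce^{-cR}$ with $C:=\delta$, then $\|X\|<\delta$, so $\exp(iX)\in\tilde\Xi$ and, by left $G$-invariance and right $K$-invariance of $\tilde\Xi$ together with $ka\in G$,
\[
\exp(iY)\,k\,a\,k' \;=\; (ka)\,\exp(iX)\,k' \;\in\; G\,\tilde\Xi\,K \;\subset\; \tilde\Xi .
\]
Since $B_R=KA_RK$, this shows $\exp(iY)B_R\subset\tilde\Xi$ for every $Y\in\kf$ with $\|Y\|<Ce^{-cR}$.

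Finally, recall that $\kf(R)$ is the connected component of $0$ in $\{Y\in\kf:\exp(iY)B_R\subset\tilde\Xi\}$. For $r\le Ce^{-cR}$ the ball $\kf_r$ is connected, contains $0$, and is contained in that set by the previous step; hence $\kf_r\subset\kf(R)$, which gives the assertion for all $r<Ce^{-cR}$. I expect the only genuinely delicate point to be the operator-norm estimate in the second paragraph, and in particular the verification that the resulting constant $c$ (and hence $C$) depends on $G$ alone; everything else is a formal consequence of the openness and the $G\times K_\C$-invariance of $\tilde\Xi$ recorded earlier in the text, together with unwinding the connected-component clause in the definition of $\kf(R)$.
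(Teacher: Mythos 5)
Your argument is correct, and it is genuinely different from the one in the paper. The paper's proof embeds $G$ into $\GL(n,\R)$ with compatible Iwasawa decompositions, passes to the matrix model $Z_\C\simeq\Sym(n,\C)_{\det\neq0}$, and checks the condition $\exp(iY)B_R\cdot z_0\subset\Xi$ against the explicit \emph{square root domain} $\Xi^{\frac12}=\Sym(n,\R)^++i\Sym(n,\R)\subset\Xi$ of \cite[Sect.~8]{K-S1}, reducing everything to an eigenvalue estimate for symmetric matrices ($[\tanh r]^2e^{2R}<1$). You instead work intrinsically: you use only that $\tilde\Xi$ is an open $G\times K_\C$-invariant neighbourhood of $e$ (which the paper does supply: $\tilde\Xi$ is a domain, and $\Xi$ is open via the diffeomorphism $G\times_K\hat\Omega\to\Xi$), and you push the perturbation $\exp(iY)$ through $KA_RK$ by conjugation, paying the factor $\|\Ad(a^{-1})\|_{\mathrm{op}}\le e^{cR}$; the identity $\exp(iY)kak'=(ka)\exp(iX)k'$ with $X=\Ad(a^{-1})\Ad(k^{-1})Y$ and the self-adjointness of $\ad H$ for $H\in\af$ are both correct, as is the final passage to the connected component. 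What your route buys is self-containedness: no auxiliary embedding, no appeal to the square root domain, and a constant $c=\max_{\alpha\in\Sigma}\|\alpha\|$ with a transparent meaning. What the paper's route buys is a concrete model in which one sees that the exponential decay is (plausibly) sharp, which matters for the discussion in Appendix~\ref{Appendix A}; your soft absorption argument gives no information in that direction. Both yield the claimed form $r<Ce^{-cR}$, so as a proof of Proposition~\ref{prop suboptimal r} your version is complete.
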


\begin{proof} Let $G=\GL(n,\R)$. We consider the standard Iwasawa decomposition of $G$,
i.e.~$K=\OO(n,\R)$, $A=\diag(n, \R_{>0})$ and $N$ is the group of unipotent upper triangular matrices. It
suffices to consider this case, as any real reductive group can be embedded into $G=\GL(n,\R)$ with compatible
Iwasawa decompositions. Here we remark that the possible incompatibility of the Cartan-Killing norms is taken care of by the presence of the constants $C$ and $c$.

We recall that
$$ Z= G/ K \to \Sym(n,\R)^+, \ \ gK\mapsto  g g^t\ ,$$
identifies $Z$ with the positive definite symmetric matrices.  In this matrix picture
$Z_\C$ is identified with $\Sym(n,\C)_{\det\neq 0}$, the invertible symmetric matrices.  In case $n\geq 3$, the crown domain
$\Xi\subset \Sym(n,\C)_{\det\neq 0}$ is not explicitly known. However, $\Xi$ contains the so-called
square root domain
\begin{equation} \label{sq domain} \Xi^{\frac{1}{2}}= \Sym(n,\R)^+ + i \Sym(n,\R) \subset \Sym(n,\C)_{\det\neq 0}\ ,\end{equation}
see \cite[Sect.~8]{K-S1}.
Let $m=\left[\frac{n}{2}\right]$ and define for $x \in \R^m$
$$D(x) = \begin{pmatrix} D_1(x) & & \\ & \ddots& \\ & & D_m(x)\end{pmatrix} \ ,$$
with
$$D_j(x)= \begin{pmatrix} \cos x_j  & -\sin x_j \\ \sin x_j & \cos x_j\end{pmatrix}\,.$$
In case $n$ is even we have $D(x) \in \OO(n,\R)$, and in case of $n$ odd we view
$D(x) \in \OO(n,\R)$ by means of the embedding
$$D(x)\mapsto \begin{pmatrix} D(x) & \\ & 1\end{pmatrix}\, .$$
Our choice of maximal torus $T\subset K$ then is $T=\{ D(x) \mid x \in \R^m\}$.
\par Let now $R>0$ and $Y \in \Sym(n,\R)^+$ with $\Spec (Y) \subset [e^{-R}, e^{R}]$.
We then seek an $r>0$ such that for all $x\in \R^m$ with $\|x\|<r$  and $Y$ as above we have $D(ix) Y D(ix)^t \in \Xi^{\frac{1}{2}}$.
If we decompose $D(ix) = U(x) + i V(x)$ into real and imaginary parts, this amounts to
$$ U(x) Y U(x)  - V(x) Y V(x)^t \in \Sym(n,\R)^+\ ,$$
by \eqref{sq domain}.  With
$$S(x) = \begin{pmatrix} \begin{pmatrix} 0 & -\tanh x_1 \\  \tanh x_1&0 \end{pmatrix} & & \\
& \ddots & \\
& & \begin{pmatrix} 0 & -\tanh x_m \\  \tanh x_m&0 \end{pmatrix} \end{pmatrix} $$
we can rewrite this as
\begin{equation}\label{YS cond} Y - S(x) Y S(x)^t \in \Sym(n,\R)^+\,. \end{equation}
Now  note that
$$ \| S(x) Y S(x)^t\|_{\rm op} \leq [\tanh r]^2 \|Y\|_{\rm op} \leq [\tanh r]^2 e^{R}\, .$$
 On the other hand, the smallest eigenvalue of $Y$ is at least $e^{-R}$. Hence \eqref{YS cond} is satisfied,
 provided $[\tanh r]^2 e^{2R} < 1$. As $\tanh r\leq r $, \eqref{YS cond}  is implied by $r^2 < e^{-2R}$, and the assertion of the proposition follows.
 \end{proof}

\section{Generalities on the analytic filtration}\label{Section Reduction}

\subsection{Filtration by holomorphic extension}
Let $V$ be a Harish-Chandra module.
The analytic vectors $V^\omega$ of $V$ are defined as the analytic vectors in $V^\infty$, i.e.~$V^\omega:= (V^\infty)^\omega$.
In the following we provide various standard descriptions of $V^\omega$.

The first one is in terms of holomorphic extensions.
For $r>0$ we define
$$
V_{r}^{\omega}
:=\{v\in V^{\infty} \mid K\ni k\mapsto k\cdot v\in V^{\infty} \text{ extends holomorphically to }K_{\C}(r)\}\, .
$$

\begin{lemma}\label{lemma K-omega}
For any Harish-Chandra module $V$ every $K$-analytic vector is analytic.  Moreover,
$$
V^\omega
=\varinjlim_{r\to 0} V_r^\omega
$$
as topological vector spaces.

\end{lemma}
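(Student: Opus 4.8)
Only the implication ``$K$-analytic $\Rightarrow$ $G$-analytic'' requires an argument, the converse being immediate by restricting the orbit map from $G$ to $K$. The plan is to reduce \emph{both} notions of analyticity to analyticity with respect to one elliptic element of $U(\gf)$ and then to exploit that on a Harish-Chandra module the Casimir of $\gf$ is essentially harmless. Let $\Cc_\gf\in Z(\gf)$ be the Casimir element attached to $\tilde\kappa$ and let $\Cc_\kf$ be the Casimir of $\kf$; set
$$\Delta:=2\Cc_\kf-\Cc_\gf\in U(\gf).$$
With respect to orthonormal bases $\{X_a\}$ of $\kf$ and $\{Y_j\}$ of $\sf$ for $\|\cdot\|$ one has $\Delta=-\sum_aX_a^2-\sum_jY_j^2$, so that $\Delta$, as a left-invariant differential operator on $G$, is elliptic with non-negative principal symbol, and $\Cc_\kf$ is likewise elliptic on $K$. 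By the classical description of analytic vectors through an elliptic generator (Goodman, Nelson), a vector $v\in V^\infty$ is $G$-analytic if and only if $\sum_n\frac{t^n}{n!}\,p(\Delta^nv)<\infty$ for some $t>0$ and every continuous seminorm $p$ on $V^\infty$, and $v$ is $K$-analytic if and only if the same holds with $\Cc_\kf$ in place of $\Delta$.

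The key point is that, $V$ being a Harish-Chandra module, $Z(\gf)$ acts through a finite-dimensional quotient, so $\Cc_\gf$ satisfies a fixed polynomial identity on $V$ and hence, by density and continuity, on $V^\infty$. Thus $V^\infty$ is the topological direct sum of the finitely many generalized eigenspaces of $\Cc_\gf$, and there are constants $C,M\ge1$, $N\in\N$ and a continuous seminorm $q$ with $p(\Cc_\gf^kw)\le C(1+k)^NM^k\,q(w)$ for all $k\ge0$ and $w\in V^\infty$; in this sense $\Cc_\gf$ is ``tame''. (By contrast $\Cc_\kf$ is not tame, its eigenvalues on the $K$-types being unbounded; controlling the powers $\Cc_\kf^jv$ is exactly what $K$-analyticity of $v$ provides.) Since $\Cc_\gf$ is central it commutes with $\Cc_\kf$, so expanding $\Delta^n=(2\Cc_\kf-\Cc_\gf)^n$ by the binomial theorem and applying the tame bound to the $\Cc_\gf$-part yields
$$p(\Delta^nv)\ \le\ C\sum_{j=0}^n\binom nj\,2^j(1+n)^NM^{n-j}\,q(\Cc_\kf^jv).$$
Inserting this into $\sum_n\frac{t^n}{n!}\,p(\Delta^nv)$, reindexing by $i=n-j$, and absorbing the factor $(1+n)^N$ by passing to a slightly smaller $t$, the double series factors into the product of $\sum_i\frac{(tM)^i}{i!}(1+i)^N<\infty$ and $\sum_j\frac{(2t)^j}{j!}(1+j)^N\,q(\Cc_\kf^jv)$, the second factor being finite for $t$ small enough precisely because $v$ is $K$-analytic. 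Hence $v$ is $G$-analytic, so $V^\omega$ coincides with the set of $K$-analytic vectors. Since a real-analytic map $K\to V^\infty$ extends holomorphically to some neighbourhood of $K$ in $K_\C$, and since $(k,X)\mapsto k\exp(iX)$ is a diffeomorphism $K\times\kf\to K_\C$ so that the $K_\C(r)$ form a neighbourhood basis of $K$, this gives $V^\omega=\bigcup_{r>0}V_r^\omega$ as sets.

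For the identity of topologies, observe that $V^\omega$ with its natural topology is the inductive limit, over a decreasing neighbourhood basis $\{U\}$ of $G$ in $G_\C$, of the Fréchet spaces of vectors whose orbit map extends holomorphically to $U$ (with the compact-open topology), while $\varinjlim_{r\to0}V_r^\omega$ is the analogous inductive limit over the domains $K_\C(r)$; both are therefore LF-spaces. The identity map $V^\omega\to\varinjlim_{r\to0}V_r^\omega$ is continuous, because for each $U$ one may choose $r$ with $K_\C(r)\subset U$ and then restriction of the holomorphic extension from $U$ to $K_\C(r)$ is a continuous map into $V_r^\omega$. As an LF-space is webbed and ultrabornological, the open mapping theorem \cite{MV} shows that this continuous linear bijection is open, hence a topological isomorphism, which yields $V^\omega=\varinjlim_{r\to0}V_r^\omega$ as topological vector spaces.

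The one genuine obstacle is the passage from $K$- to $G$-analyticity: estimating $\sf$-derivatives by $\kf$-derivatives fails for an arbitrary smooth vector, and the argument really needs \emph{both} the ellipticity trick --- which replaces $G$-analyticity by analyticity for the single operator $\Delta$ built out of $\Cc_\kf$ and the central $\Cc_\gf$ --- and the Harish-Chandra hypothesis, which makes $\Cc_\gf$ tame so that its contribution to $\Delta^n$ is only an exponentially bounded, harmless perturbation. The converse inclusion and the comparison of topologies are then formal, the latter resting on the open mapping theorem for LF-spaces.
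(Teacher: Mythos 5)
Your proposal is correct and follows essentially the same route as the paper: both arguments rest on the identity relating the Laplace element of $\gf$ to twice that of $\kf$ plus the central Casimir, on the fact that the Casimir acts finitely (hence ``tamely'') on a Harish-Chandra module so that $K$-analyticity and $G$-analyticity coincide via the elliptic characterization of analytic vectors, and on the open mapping theorem for the topological identification. The paper compresses the binomial/tameness estimate into the single remark that $\Delta_G$ differs from $2\Delta_K$ by $\Cc$, which acts finitely on $V$; your write-up merely supplies the details of that step.
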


\begin{proof} From the definition it is easily checked that $\varinjlim_{r\to 0} V_r^\omega$ describes the space of analytic vectors for the representation on $V^\infty$ restricted to $K$ with the topology of uniform convergence on $K$.

We recall the notion of $\Delta$-analytic vectors from \cite[Sect. 5]{GKL} and that the space of $\Delta$-analytic vectors coincides with the space of analytic vectors for any F-representa\-tion of a Lie group, and in particular for $V^{\infty}$.
Let $\Cc$ be the Casimir element and let $\Delta_K$ and $\Delta_G$ be the standard Laplace elements
in $\U(\kf)$ and $\U(\gf)$, respectively. Then $\Delta_G = \Cc +2\Delta_K$. As
$\Delta_G$ differs from $2\Delta_K$ by $\Cc$, which acts finitely on $V$, it follows that any $\Delta_K$-analytic vector is $\Delta_G$-analytic, and vice versa. This proves the first assertion.

The identity map from the space of $G$-analytic vectors to the space of $K$-analytic vectors is continuous. The second assertion now follows from the open mapping theorem (see \cite[Theorem 24.30 and Remark 24.36]{MV}).
\end{proof}

\subsection{Filtration by $K$-type decay}
The next description of analytic vectors is by exponential decay of $K$-types.
A norm $p$ on $V$ is called $G$-continuous provided that the completion $V_{p}$ of the normed space $(V, p)$ gives rise
to a Banach-representation of $G$. We choose a $G$-continuous norm $p$ on $V$. Let $V^{\infty}$ be the up to isomorphism
unique smooth completion of $V$ with moderate growth, see \cite{C}, \cite[Ch.~11]{W2} or
\cite{BK}.
We write a vector $v\in V^{\infty}$ as a convergent sum
 $$
 v
 =\sum_{\tau\in\hat{K}}v_{\tau}\ ,
 $$
where $v_{\tau}$ is contained in the $K$-isotypical component $V[\tau]$ of $V$. For any $\tau\in \hat K$ we denote by $|\tau|$ the norm of the highest weight of $\tau$.

For $r>0$ let us define
$$
V^{\omega}(r)
:=\{v\in V^{\infty}\mid (\forall 0<r'<r)\  \sum_{\tau\in\hat{K}}e^{r'|\tau|}p(v_{\tau})<\infty\}
$$
and endow it with the Fr{\'e}chet topology induced by the seminorms
$$
v\mapsto \sum_{\tau\in\hat{K}}e^{r'|\tau|}p(v_{\tau})
\qquad(0<r'<r)\,.
$$

The space $V^{\omega}(r)$ is independent of the choice of the $G$-continuous norm $p$, as all these norms are polynomially comparable on the $K$-types, i.e.~given two $G$ continuous norms
$p$ and $q$ on $V$ there exists a constant $C>0$, so that $p|_{V[\tau]}\leq C ( 1 + |\tau|)^C q|_{V[\tau]}$ for all $\tau\in\hat{K}$, see
\cite[Th.~1.1]{BK}.

\begin{lemma}\label{lemma K-analytic}For every Harish-Chandra module $V$  we have
$$
V^{\omega}_r = V^{\omega}(r) \qquad (r>0)
$$
as topological vector spaces.
\end{lemma}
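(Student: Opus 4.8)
The plan is to show both inclusions $V^\omega_r\subset V^\omega(r)$ and $V^\omega(r)\subset V^\omega_r$ by relating, for a vector $v=\sum_{\tau\in\hat K}v_\tau$, the holomorphic extendability of the orbit map $k\mapsto k\cdot v$ to $K_\C(r)$ with the exponential decay of the $K$-isotypical norms $p(v_\tau)$. The key object is, for each $X\in\kf$, the operator $\pi(\exp(iX))$ formally acting on the $\tau$-component; on $V[\tau]$ the "infinitesimal" growth is governed by $\exp(i\,\pi_\tau(X))$, and the operator norm of this on a Banach completion $V_p$ behaves like $e^{\|X\|\cdot\|{\text{highest weight of }\tau}\|}$ up to a polynomial factor in $\dim\tau$ (hence in $|\tau|$). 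This is exactly the bridge between the two filtrations: holomorphic extension to radius $r$ forces summability of $e^{r'|\tau|}p(v_\tau)$ for all $r'<r$, and conversely.

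First I would fix a $G$-continuous norm $p$ and work in the Banach completion $V_p$, on which $G$ acts by a Banach representation; by Lemma \ref{lemma K-omega} we may replace $V^\infty$ throughout by any such $V_p$ since all $G$-continuous norms are polynomially comparable on $K$-types (the cited \cite[Th.~1.1]{BK}), and the resulting spaces $V^\omega_r$, $V^\omega(r)$ do not change as topological vector spaces. For the inclusion $V^\omega_r\subset V^\omega(r)$: suppose $k\mapsto k\cdot v$ extends holomorphically to $K_\C(r)$. Pick $X$ in a maximal torus $\tf\subset\kf$ with $\|X\|<r'<r$ and expand the holomorphic extension $F(\exp(iX)k_0)$ in a Fourier series over $\hat K$; projecting onto $V[\tau]$ and using that on $V[\tau]$ the map $t\mapsto F(\exp(itX))v_\tau$ is, after diagonalizing $\pi_\tau(X)$, a finite sum of exponentials $e^{it\mu}$ with $|\mu|\le\|X\|\cdot|\tau|$ (Cauchy–Schwarz / weight bound), one deduces $p\big(\pi(\exp(iX))v_\tau\big)$ dominates $e^{r''|\tau|}p(v_\tau)$ for a slightly smaller $r''$, by extracting the leading exponential from a uniform bound on the holomorphic extension over a compact subset of $K_\C(r)$. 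Summing over $\tau$ and using that the holomorphic extension is bounded on compacta gives $\sum_\tau e^{r''|\tau|}p(v_\tau)<\infty$, and since $r''<r$ was arbitrary, $v\in V^\omega(r)$; continuity of the inclusion follows from the same estimates by the open mapping theorem, as in Lemma \ref{lemma K-omega}.

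Conversely, for $V^\omega(r)\subset V^\omega_r$: given $v$ with $\sum_\tau e^{r'|\tau|}p(v_\tau)<\infty$ for all $r'<r$, define the candidate extension $F(g)=\sum_\tau \pi(g)v_\tau$ for $g\in K_\C(r)$. Write $g=k_1\exp(iX)k_2$ with $\|X\|<r_1<r$; then $\|\pi(g)v_\tau\|_p\le \|\pi(\exp(iX))|_{V[\tau]}\|\cdot p(v_\tau)\le C(1+|\tau|)^N e^{\|X\|\,|\tau|}p(v_\tau)$, using the Banach-representation bound on $\exp(iX)$ restricted to the finite-dimensional $\tau$-isotypic space (here one uses that $\|\pi_\tau(X)\|\le\|X\|\,|\tau|$ up to a fixed constant, by the choice of the Cartan–Killing norm). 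Choosing $r_1<r'<r$ makes $e^{\|X\|\,|\tau|}p(v_\tau)\le e^{-( r'-r_1)|\tau|}\big(e^{r'|\tau|}p(v_\tau)\big)$, so the series converges absolutely and locally uniformly on $K_\C(r)$ to a holomorphic function restricting to $k\mapsto k\cdot v$ on $K$; hence $v\in V^\omega_r$, and the estimate is uniform enough to give continuity of the inclusion. I expect the main obstacle to be the careful two-sided comparison of $\|\pi(\exp(iX))|_{V[\tau]}\|$ with $e^{\|X\|\,|\tau|}$: the upper bound is routine, but the lower bound needed for $V^\omega_r\subset V^\omega(r)$ requires extracting, from a bound on the full holomorphic orbit map, a bound on the single leading exponential in the $\tau$-block, which must be done with enough uniformity in $\tau$ to survive summation — this is where one pays the $r''<r'<r$ losses and where the polynomial-in-$|\tau|$ slack from \cite{BK} is used.
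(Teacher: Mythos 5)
Your second half (the inclusion $V^{\omega}(r)\subset V^{\omega}_r$) is essentially the paper's argument: choose $p$ Hermitian and $K$-unitary, bound $\exp(iX)$ on $V[\tau]$ by $e^{\|X\|\,|\tau|}$, and sum. The only loose end there is why the limit is holomorphic with values in $V^\infty$ rather than merely in $V_p$; the paper handles this by citing that $K$-smooth and $G$-smooth vectors of $V_p$ coincide.

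The other inclusion $V^{\omega}_r\subset V^{\omega}(r)$ — which you correctly identify as the main obstacle — is where your proposal has a genuine gap. The asserted lower bound ``$p\big(\pi(\exp(iX))v_\tau\big)$ dominates $e^{r''|\tau|}p(v_\tau)$'' for a \emph{single fixed} $X\in\tf$ is false: if $v_\tau$ lies in the zero weight space of $V[\tau]$ for the torus through $X$ (e.g.\ the middle vector of the adjoint representation of $\SU(2)$), then $\pi(\exp(itX))v_\tau=v_\tau$ for all $t$, and no exponential growth in $|\tau|$ occurs. To get a usable lower bound one must exploit the \emph{full} holomorphic extension over $K_{\C}(r')$, i.e.\ quantify that some $K$-rotate of $v_\tau$ has a sufficiently large highest-weight component, uniformly in $\tau$ — and this is precisely the step you defer rather than carry out. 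The paper resolves it by first embedding $V^\infty$ into a minimal principal series realized on $C^\infty(K)\otimes V_\sigma$, so that one may work with $\Hc=C^\infty(K)$ as a $K\times K$-module with the $L^2(K)$-norm $p$; then the $K\times K$-invariant norm $q_{r'}(v)^2=\int_{K_\C(r')}|v|^2\,d\mu$ is a scalar multiple $c_{r',\tau}$ of $p_\tau$ on each irreducible block $\Hc[\tau]\simeq V_\tau\otimes V_{\tau^\vee}$ by Schur's lemma, and a Schur--Weyl orthogonality computation on matrix coefficients gives the uniform lower bound $c_{r',\tau}\geq c_{r''}e^{r''|\tau|}$, from which $\sum_\tau e^{r''|\tau|}p_\tau(v_\tau)\leq c_{r''}^{-1}q_{r'}(v)<\infty$ follows. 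Without some averaging device of this kind (integration over $K_\C(r')$ plus irreducibility of the two-sided $K$-action), your sketch does not yield the exponential decay of the $K$-type norms, so the hard inclusion remains unproved as written.
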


\begin{proof}
Let $r>0$. We first prove the inclusion $ V^{\omega}(r)\subset V^{\omega}_r$. For this let $v \in V^\omega(r)$ and $0<r'<r$. Recall that $K_\C(r')= K \exp(i\tf_{r'}) K$ with
$\tf_{r'}=\{ X \in \tf\mid \|X\|<r'\}$.  Since the space $V^{\omega}(r)$ is independent of the choice of the $G$-continuous norm
$p$, we may assume that $p$ is Hermitian and $K$-unitary.
Any element $t\in\exp(i\tf_{r'}) $ acts semisimply on $V[\tau]$ with eigenvalues bounded by $e^{r'|\tau|}$.
As $p$ is $K$-unitary, it follows that
\begin{equation} \label{ineq1} \sup_{k\in K_\C(r')} p (k \cdot v _\tau)\leq e^{r' |\tau|} p(v_\tau)\qquad (\tau \in \hat K)\, .\end{equation}
We recall that $V_p$ is the Hilbert completion of $V$ with respect to $p$ and that $V_{p}$ is a Hilbert representation of $G$. Because $v \in V^\omega(r)$, inequality \eqref{ineq1}
and the fact that $\dim V[\tau]$ is polynomially bounded in $|\tau|$
imply that the orbit map
$$ f_v: K \to V_p, \ \ k\mapsto k\cdot v\ ,$$
extends holomorphically to $K_\C(r')$. Since this holds for all $r'<r$, the function $f_{v}$ in fact extends holomorphically to $K_{\C}(r)$.
The image of $f_v$ is not only in $V_p$, but in the $K$-smooth vectors of $V_p$. Since the Fr{\'e}chet spaces of  $K$-smooth and $G$-smooth vectors in $V_{p}$ are identical (see \cite[Corollary 3.10]{BK}), we obtain that $f_v$ is a holomorphic map with values in $V_p^\infty = V^\infty$. Thus
we have shown that $ V^\omega(r) \subset V_r^\omega$.
The embedding is continuous in view of (\ref{ineq1}).

\par For the converse inclusion $V^{\omega}_r \subset V^{\omega}(r)$, we note that
for an irreducible Harish-Chandra module $V$ the representation
$V^{\infty}$ can be embedded into the space of smooth vectors of a minimal principal series module $V_{\sigma,\lambda}$. The latter can be realized as the space of smooth functions $f:G\to V_{\sigma}$ satisfying
$$
f(gman)
=a^{-i\lambda -\rho}\sigma(m)^{-1}f(g)\qquad
(g\in G, man\in MAN)\, .
$$
Note that $V_{\sigma,\lambda}^\infty$ is naturally
a $G$-module, with $G$ acting on $V_{\sigma,\lambda}^\infty$ by
left displacements in the arguments, in symbols
$\pi_{\sigma,\lambda}(g)(f)=f(g^{-1} \cdot )$.
We write $\Hc$ for $C^{\infty}(K)$
equipped with the $G$-representation $\pi_{\lambda}$ given by
$$
\big(\pi_{\lambda}(g)f\big)(k)=\mathbf{a}(g^{-1}k)^{-i\lambda-\rho}f(\mathbf{k}\big(g^{-1}k)\big)
\qquad\big(f\in \Hc,g\in G, k\in K\big)\ .
$$
We may embed $V^{\infty}$ equivariantly into $\Hc\otimes V_{\sigma}$.
It therefore suffices to prove that $\Hc_{r}^{\omega}\subset\Hc^{\omega}(r)$.

We let $p$ be the $L^2$-norm on $\Hc$, which is $G$-continuous.
Note that $K$ acts also from the right on smooth functions on $K$, and therefore $\Hc$ carries a representation of $K\times K$. From now on we consider $\Hc$ as a $K\times K$ module.
For $0<r'<r$ we  define a $K\times K$-invariant Hermitian norm
on $\Hc_{r}^{\omega}$ by
$$
q_{r'}(v)
:= \left[\int_{K_\C(r')} |v(k)|^2 \ d\mu(k)\right]^{\frac{1}{2}}
\qquad (v\in \Hc_{r}^{\omega})\ .
$$
Here $d\mu$ is the measure on $K_{\C}$ which in the polar decomposition $K_{\C}=K\exp(i\tf^{+})K$ is given by
$$
d\mu(k_{1}\exp(it)k_{2})
=dk_{1}\,dt\,dk_{2}\ ,
$$
with $dk_{1,2}$ the Haar measure on $K$ and $dt$ the Lebesgue measure on $\tf^{+}$. For $\tau\in\hat{K}$ we define $\Hc[\tau]$ to be the $\tau\otimes \tau^{\vee}$-isotypical component of $\Hc$ and denote the restriction of $p$ and $q_{r'}$ to $\Hc[\tau]$ by $p_{\tau}$ and $q_{r',\tau}$, respectively.
Since $\Hc[\tau]$ is $K\times K$-irreducible, there exists a constant
$c_{r',\tau}>0$, so that $q_{r',\tau} = c_{r',\tau} \cdot p_\tau$.

We will estimate the constant from below by estimating $q_{r',\tau}(v)$ for a matrix coefficient
$$
v=m_{w_{1},w_{2}}:k\mapsto \langle w_{1},\tau(k)w_{2}\rangle\ ,
$$
 where $w_{1},w_{2}\in V_{\tau}$. Using the Schur-Weyl orthogonality relations we obtain
$$
q_{r',\tau}(v)^{2}
=\frac{\|w_{1}\|^{2}}{\dim \tau}\int_{K}\int_{\tf_{r' }^{+}}\|\tau\big(\exp(it)k\big)w_{2}\|^{2}\,dk\,dt\ .
$$
Next we pick an orthonormal basis of weight vectors $v_{1},\dots,v_{n}\in V_{\tau}$ and expand the integrand. We thus obtain that the right-hand side is equal to
$$
\frac{\|w_{1}\|^{2}}{\dim \tau}\sum_{j=1}^{n}\int_{K}\int_{\tf_{r'}^{+}}|\langle\tau(k)w_{2},\tau(\exp(it))v_{j}\rangle|^{2}\,dk\,dt\ .
$$
Now we apply Schur-Weyl once more. This yields
$$
\frac{\|w_{1}\|^{2}\|w_{2}\|^{2}}{\dim(\tau)^{2}}\sum_{j=1}^{n}\int_{\tf_{r'}^{+}}\|\tau(\exp(it))v_{j}\|^{2}\,dt\ .
$$
Again by Schur-Weyl we note
$$
\frac{\|w_{1}\|^{2}\|w_{2}\|^{2}}{\dim(\tau)}
=p_{\tau}(v)^{2}\ .
$$

Let $\mu_{\tau}$ be the highest weight of $\tau$, and assume that $v_{1}$ is a highest weight vector with weight $\mu_{\tau}$. Then for all $r''<r'$ there exists a constant $c$, independent of $\tau$, so that
$$
q_{r',\tau}(v)^{2}
\geq \frac{p_{\tau}(v)^{2}}{\dim(\tau)}\int_{\tf_{r'}^{+}}e^{2\mu_{\tau}(it)}\,dt
\geq c^{2} e^{2\|\mu_{\tau}\|r''}p_{\tau}(v)^{2}\ .
$$
As $\|\mu_{\tau}\|=|\tau|$, we conclude that for every $r''<r'$ there exists a constant $c_{r''}>0$, so that
$$
c_{r',\tau}
\geq c_{r''} e^{r'' |\tau|}
\qquad(\tau\in\hat{K})\ .
$$

If $v=\sum_{\tau\in\hat{K}}v_{\tau} \in \Hc_r^\omega$, then for all $0<r''<r'<r$
$$
\sum_{\tau\in\hat{K}}e^{|\tau| r''}p_{\tau}(v_{\tau})
\leq\frac{1}{c_{r''}}\sum_{\tau\in\hat{K}}q_{r',\tau}(v_{\tau})
=\frac{1}{c_{r''}}q_{r'}(v)<\infty\ .
$$
It follows that $v \in \Hc_r^\omega$ implies $v \in \Hc^\omega(r)$.
Moreover, the embedding is continuous.
\end{proof}

\subsection{Reduction to spherical principal series}\label{Subsection reduction}
It is our intention to show for a given Harish-Chandra module $V$ and $r>0$
that there is a continuous embedding
$$
V^\omega(r)
\subset V_R^{\rm min}
$$
for some $R=R(r)>0$.
For $\lambda\in\af_{\C}^{*}$ we write $ V_\lambda$ for the spherical principal series representation $\Ind_P^G (\C_{i\lambda})$.
We will first reduce the problem to the case in which $V=V_\lambda$ for some $\lambda\in\af_{\C}^{*}$.
\par Every irreducible Harish-Chandra module $V$ is a quotient
\begin{equation}\label{eq V quotient of V_lambda times F}
V_\lambda\otimes F
\twoheadrightarrow V
\end{equation}
for some spherical principal series $V_\lambda$ and finite dimensional
representation $F$ of $G$, see \cite[Sect.~2]{W}. We first recall how this arises.  By the Casselman embedding theorem every
irreducible Harish-Chandra module $V$ is a quotient of some minimal principal series module $V_{\sigma, \lambda} = \Ind_P^G (V_{\sigma}\otimes\C_{i\lambda})$ with $(\sigma,V_{\sigma})\in\hat M$ and $\lambda\in\af_{\C}^{*}$, i.e.~
$$
V_{\sigma, \lambda} \twoheadrightarrow V\, .
$$

Now, by op.~cit.~the $M$-representation $(\sigma, V_\sigma)$ can be realized  as the quotient
$F/ \nf F$ of a finite dimensional module $F$ of $G$, i.e.~$V_\sigma= F/ \nf F$.  By the Mackey isomorphism
we have
$$ V_\lambda \otimes F = \Ind_P^G (\C_{i\lambda}) \otimes F \simeq \Ind_P^G (\C_{i\lambda}\otimes F|_{P})\ .$$
Hence the $P$-morphism $\C_{i\lambda}\otimes F|_{P} \to \C_{i\lambda} \otimes F/  \nf F  \simeq \C_{i\lambda}\otimes V_\sigma$
gives rise to the chain of quotients
$$ V_\lambda \otimes F \twoheadrightarrow V_{\sigma, \lambda} \twoheadrightarrow V\, .$$
This proves (\ref{eq V quotient of V_lambda times F}).

\begin{lemma}\label{lemma reduction 1}
Let $\lambda\in\af_{\C}^{*}$ and $F$ a finite dimensional representation of $G$. The following assertions hold.
\begin{enumerate}[(i)]
\item\label{lemma reduction 1 - item 1} Let $r, R>0$. If
$V_{\lambda}^\omega(r)$ embeds continuously into $C_R^\infty (G)*V_\lambda $, then also
$ (V_\lambda\otimes F)^\omega(r) $ embeds continuously into $ C_R^\infty(G)* (V_\lambda\otimes F)$.

\item\label{lemma reduction 1 - item 2} Let $V$ be a Harish-Chandra module so that there exists a quotient map $\varpi:V_{\lambda}\otimes F\to V$. Then for every $r>0$
$$
V^{\omega}(r)
=\varpi\Big( (V_\lambda\otimes F)^\omega(r)\Big)\, ,
$$
where the symbol $\varpi$ is also used for its the globalization to $(V_{\lambda}\otimes F)^{\infty}$.
 \end{enumerate}
\end{lemma}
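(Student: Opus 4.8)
For part (\ref{lemma reduction 1 - item 1}) the plan is to use that $F$ is finite dimensional, so that the relevant exponential-growth estimates on $K$-types are insensitive to tensoring with $F$. Concretely, fix a $G$-continuous norm on $V_\lambda$ and a $K$-invariant norm on $F$ and equip $V_\lambda\otimes F$ with the tensor norm $p$. Since $F$ has only finitely many $K$-types, there is a constant $N$ with the following property: if $\tau\in\hat K$ occurs in $V_\lambda[\sigma]\otimes F$ then $\big||\tau|-|\sigma|\big|\leq N$, and the multiplicity of $\tau$ is bounded. From this one reads off that the identity map gives topological isomorphisms (up to bounded constants in the seminorms)
$$
(V_\lambda\otimes F)^\omega(r)\ \cong\ V_\lambda^\omega(r)\otimes F,
$$
for every $r>0$; this is where Lemma \ref{lemma K-analytic} is convenient, as it lets us work with the $K$-type decay description rather than holomorphic extension. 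On the other side, since convolution commutes with the (finite) action needed to build $V_\lambda\otimes F$ from $V_\lambda$, one has
$$
C_R^\infty(G)*(V_\lambda\otimes F)\ =\ \big(C_R^\infty(G)*V_\lambda\big)\otimes F
$$
compatibly with the respective quotient topologies (the balls $B_R$ are the same). Then tensoring the hypothesized continuous embedding $V_\lambda^\omega(r)\hookrightarrow C_R^\infty(G)*V_\lambda$ with $\id_F$ yields the desired continuous embedding. The one point requiring a little care is that the quotient topology on $C_R^\infty(G)\otimes V_f\twoheadrightarrow V_{\rm min}$ behaves well under $-\otimes F$; since $F$ is finite dimensional this is automatic.

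For part (\ref{lemma reduction 1 - item 2}) the containment $\varpi\big((V_\lambda\otimes F)^\omega(r)\big)\subseteq V^\omega(r)$ is the easy direction: $\varpi$ (globalized) is a continuous $G$-map, hence sends analytic vectors with a given radius of $K$-analyticity to analytic vectors with at least that radius, and on the level of $K$-type decay it can only decrease the seminorms $\sum_\tau e^{r'|\tau|}p(v_\tau)$ up to a constant, because $\varpi$ maps $K$-types to $K$-types without increasing $|\tau|$. The substantive direction is surjectivity: given $v\in V^\omega(r)$, I want to lift it to an analytic vector in $(V_\lambda\otimes F)^\omega(r)$ with the same radius. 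Here I would argue as follows. Write $W:=V_\lambda\otimes F$ and let $W_0:=\ker\varpi$ (on the Harish-Chandra module level), so $0\to W_0\to W\to V\to 0$ is exact. Passing to smooth globalizations is exact by Casselman--Wallach, and the analytic-vector functor is left exact, giving $0\to W_0^\omega\to W^\omega\to V^\omega$. The point is that the last map is also \emph{surjective} with the radius preserved. One way: the $K$-isotypic projection $E_\tau:W^\omega\to W[\tau]$ is continuous and $\varpi$ is $K$-equivariant, so $\varpi(W[\tau])=V[\tau]$ and a choice of $K$-equivariant linear splitting $s_\tau:V[\tau]\to W[\tau]$ (finite dimensional, and with operator norm bounded polynomially in $|\tau|$ by \cite[Th.~1.1]{BK} applied to the $G$-continuous norms on $W$ and $V$) lets us set
$$
s(v):=\sum_{\tau\in\hat K} s_\tau(v_\tau).
$$
Because the $s_\tau$ are bounded polynomially in $|\tau|$, the estimate $\sum_\tau e^{r'|\tau|}p_W(s_\tau v_\tau)\le C\sum_\tau (1+|\tau|)^C e^{r'|\tau|}p_V(v_\tau)<\infty$ for every $r'<r$ shows $s(v)\in W^\omega(r)$ and $\varpi(s(v))=v$.

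The main obstacle, and the step I would spend the most care on, is making the splitting argument in part (\ref{lemma reduction 1 - item 2}) genuinely quantitative and compatible with the Fréchet topologies: one must know that the $K$-equivariant splittings $s_\tau$ can be chosen with norms growing at most polynomially in $|\tau|$, uniformly, so that $s$ maps $V^\omega(r)$ continuously into $W^\omega(r)$. This follows from the polynomial comparability of $G$-continuous norms on $K$-types (\cite[Th.~1.1]{BK}) together with the fact that, for each $\tau$, $W[\tau]\to V[\tau]$ is a surjection of finite-dimensional $K$-modules and hence admits an orthogonal (for a $K$-invariant inner product) splitting of norm $1$ in that inner product; converting between that inner product and the fixed $G$-continuous norm on $W[\tau]$ costs only a polynomial factor. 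With this in hand, $\varpi\circ s=\id$ on $V^\omega(r)$ gives the reverse inclusion, and equality of topologies follows from the open mapping theorem for the induced surjection $W^\omega(r)\twoheadrightarrow V^\omega(r)$ of Fréchet spaces, exactly as in the proof of Lemma \ref{lemma K-omega}.
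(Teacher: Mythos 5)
Your proposal is correct and takes essentially the same approach as the paper: for (\ref{lemma reduction 1 - item 1}) the identification $(V_\lambda\otimes F)^\omega(r)=V_\lambda^\omega(r)\otimes F$ followed by the matrix-coefficient argument embedding $\bigl(C_R^\infty(G)*V_\lambda\bigr)\otimes F$ into $C_R^\infty(G)*(V_\lambda\otimes F)$ (the paper cites the analogue of \cite[Lemma 9.4]{BK} for exactly this step), and for (\ref{lemma reduction 1 - item 2}) a $K$-equivariant splitting controlled by a $K$-invariant Hermitian $G$-continuous norm. The only cosmetic difference is that the paper takes the quotient norm of such a Hermitian norm, so the orthogonal splitting is a global isometry and the polynomial comparison of norms on $K$-types is absorbed into the norm-independence of $V^\omega(r)$ rather than tracked per $\tau$.
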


\begin{proof} First note that $ (V_\lambda\otimes F)^\omega(r)= V_{\lambda}^\omega (r)\otimes F$, as
$F$ is in fact a $G_\C$-module.
To prove (\ref{lemma reduction 1 - item 1}), it thus suffices to show that $\big(C_{R}^{\infty}(G)*V_{\lambda}\big)\otimes F$
continuously embeds into
$C_{R}^{\infty}(G)*(V_{\lambda}\otimes F)$.
The proof for this is analogous to \cite[Lemma 9.4]{BK}.

We move on to (\ref{lemma reduction 1 - item 2}).
Let $p$ be a $K$-invariant $G$-continuous Hermitian norm on $V_{\lambda}\otimes F$. Let $q$ be the corresponding quotient norm on $V$. Then $q$ is $G$-continuous and $K$-invariant. Note that the definition of $V^{\omega}(r)$ does not depend on the choice of the $G$-continuous norm on $V$.
Assertion (\ref{lemma reduction 1 - item 2}) now follows, because $V^{\omega}(r)$ as a $K$-module is a direct summand of $(V_{\lambda}\otimes F)^{\omega}(r)$.
\end{proof}

\subsection{Kostant's condition}
We would like to be more restrictive on the parameter $\lambda$ of the quotient $V_\lambda\otimes F \twoheadrightarrow V$.

\begin{lemma}\label{lemma quotient} Every irreducible Harish-Chandra module $V$ admits a quotient $V_\lambda \otimes F \twoheadrightarrow V$,
where $F$ is an irreducible finite dimensional representation of $G$ and $\lambda\in \af_\C^*$
satisfies the Kostant condition
\begin{equation} \label{Kos}
\re (i\lambda)(\alpha^\vee)\geq 0 \qquad  (\alpha\in \Sigma^+)\, .
\end{equation}
If (\ref{Kos}) is satisfied, then  $V_\lambda = \U(\gf) v_{K,\lambda}$ is $ \U(\gf)$-cyclic for the $K$-fixed vector $v_{K,\lambda}$.
\end{lemma}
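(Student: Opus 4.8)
The plan is to prove the first assertion by translating the parameter of the already‑available quotient $V_{\mu}\otimes F_{0}\twoheadrightarrow V$ into the closed Kostant chamber, and to obtain the cyclicity statement from Kostant's theorem. So fix $\mu\in\af_{\C}^{*}$ and a finite dimensional $G$‑module $F_{0}$ with a $(\gf,K)$‑equivariant surjection $V_{\mu}\otimes F_{0}\twoheadrightarrow V$, as recalled above.

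The heart of the argument is the following translation step: if $F_{1}$ is an irreducible finite dimensional \emph{spherical} $G$‑module with lowest restricted weight $\eta\in\af^{*}$, then there is a $(\gf,K)$‑equivariant surjection $V_{\mu+i\eta}\otimes F_{1}\twoheadrightarrow V_{\mu}$. I would prove this by running the construction recalled above with $F_{1}$ in the role of $F$: via the Mackey isomorphism $V_{\lambda}\otimes F_{1}\cong\Ind_{P}^{G}(\C_{i\lambda}\otimes F_{1}|_{P})$ and the $P$‑equivariant surjection $F_{1}|_{P}\twoheadrightarrow F_{1}/\nf F_{1}$ onto the lowest $\af$‑weight layer, one gets a quotient map from $V_{\lambda}\otimes F_{1}$ onto the principal series induced from $\C_{i\lambda}\otimes(F_{1}/\nf F_{1})$. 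On $F_{1}/\nf F_{1}$ the group $N$ acts trivially, $A$ acts by $\eta$, and — this is the single place where sphericity of $F_{1}$ is used — $M$ acts trivially, by the Cartan--Helgason fact that the extreme restricted weight spaces of a spherical representation are $M$‑fixed; absorbing the $A$‑weight $\eta$ into the inducing character identifies the target with the \emph{spherical} principal series $V_{\lambda-i\eta}$. Putting $\lambda=\mu+i\eta$ gives the asserted surjection.

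It remains to choose $F_{1}$ so that $\lambda=\mu+i\eta$ lies in the closed Kostant chamber, and then to assemble. One has $\re(i\lambda)(\alpha^{\vee})=\re(i\mu)(\alpha^{\vee})-\langle\eta,\alpha^{\vee}\rangle$ for $\alpha\in\Sigma^{+}$, and $-\langle\eta,\alpha^{\vee}\rangle\ge 0$ since the lowest weight $\eta$ is anti‑dominant. Because the highest weights of spherical irreducible $G$‑modules form a sub‑semigroup of full rank in the dominant restricted weights, one may take $F_{1}$ with highest weight so large that $-\langle\eta,\alpha^{\vee}\rangle$ exceeds $\sup_{\beta\in\Sigma^{+}}|\re(i\mu)(\beta^{\vee})|$ for every $\alpha\in\Sigma^{+}$ simultaneously; then $\re(i\lambda)(\alpha^{\vee})\ge 0$ for all $\alpha\in\Sigma^{+}$, i.e. \eqref{Kos} holds. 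Now compose $V_{\lambda}\otimes F_{1}\otimes F_{0}\twoheadrightarrow V_{\mu}\otimes F_{0}\twoheadrightarrow V$ and decompose $F_{1}\otimes F_{0}=\bigoplus_{j}F^{(j)}$ into irreducibles: the images of the submodules $V_{\lambda}\otimes F^{(j)}$ are submodules of $V$ with sum $V$, so by irreducibility of $V$ one of them equals $V$, and the corresponding $F:=F^{(j)}$ together with $\lambda$ proves the first assertion.

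For the second assertion I would invoke Kostant's theorem \cite{Kos}, which says precisely that $v_{K,\lambda}$ is $\U(\gf)$‑cyclic in $V_{\lambda}$ whenever $\lambda$ satisfies \eqref{Kos}; the underlying reason is that \eqref{Kos} makes $i\lambda$ the dominant leading exponent of $V_{\lambda}$ along $A^{+}$, so that the submodule generated by $v_{K,\lambda}$ — whose matrix coefficients already display this exponent in their asymptotics — cannot be proper. This cyclicity result is the only genuinely nonformal ingredient of the lemma; in the first assertion the single delicate point, and the reason $F_{1}$ must be chosen spherical, is the Cartan--Helgason identification of the $M$‑type of the extreme weight space, which is what keeps the translated module a spherical principal series.
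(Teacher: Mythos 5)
Your proof is correct and follows essentially the same route as the paper: twist by a $K$-spherical finite-dimensional representation with sufficiently large (anti-)dominant lowest restricted weight to move the parameter into the closed Kostant chamber, using the Cartan--Helgason fact that $M$ acts trivially on $F_1/\nf F_1$, and quote Kostant's theorem for cyclicity. You even supply a detail the paper glosses over, namely decomposing $F_1\otimes F_0$ into irreducible summands and using irreducibility of $V$ to extract an irreducible $F$.
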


\begin{proof}
In view of \eqref{eq V quotient of V_lambda times F}, $V$ admits a quotient $V_{\lambda}\otimes F\twoheadrightarrow V$, where $F$ is a finite dimensional representation of $G$.
Let $F'$ be a $K$-spherical finite dimensional representation of  lowest weight $-\mu \in \af^*$, where $\mu$ is dominant.
Then $M$ acts trivially on the $MA$-module $F'/\nf F'\simeq \C_{-\mu}$ with $A$-weight $-\mu$.
In particular, we obtain a quotient
$$
V_{\lambda-i\mu}\otimes F' \twoheadrightarrow V_{\lambda }\, .
$$
It follows that $V$ admits a quotient $V_{\lambda-i\mu}\otimes F\otimes F'\twoheadrightarrow V$.
The first assertion now follows by taking $\mu$ sufficiently large. The last assertion is \cite[Th.~8]{Kos}.
\end{proof}

\section{The geometric inclusion}
The goal of this section is to show that $V_R^{\rm min} = C_R^\infty(G)*V$
embeds into
$V^\omega(r)$ for any $r>0$ with $\kf_r\subset \kf(R)$. This reduces to the case where $V=V_\lambda$ is the spherical principal series representation with parameter $\lambda\in\af^{*}_{\C}$. Elements in $V_\lambda^\infty$ are uniquely determined by their restriction to $K$. This gives rise to the compact model, in which
\begin{itemize}
\item $V_\lambda^\infty= C^\infty(K/M)$,
\item $V_\lambda^\omega= C^\omega(K/M)$,
\item $V_\lambda= \C [K_\C/M_\C] $
\end{itemize}
 as $K$-modules. The main result of this section is the following.

\begin{prop} \label{prop geom} Let $V$ be a Harish-Chandra module, and $R>0$. Let $r$ be such that $\kf_r\subset \kf(R)$. Then we have the continuous embedding
$$V_R^{\rm min} \subset V^\omega(r)\, .$$
\end{prop}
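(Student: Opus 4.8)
The plan is to reduce first to the case $V = V_\lambda$ a spherical principal series and then exploit the holomorphic extension of the orbit maps $f_\lambda \colon gK \mapsto \pi_\lambda(g) v_{K,\lambda}$ to the crown domain. Concretely, by Lemma \ref{lemma reduction 1} and the remarks preceding it, every Harish-Chandra module $V$ is a quotient $\varpi \colon V_\lambda \otimes F \twoheadrightarrow V$ with $F$ finite dimensional (hence a $G_\C$-module). Since $V_R^{\rm min} = C_R^\infty(G) * V$ is the image under the globalized $\varpi$ of $C_R^\infty(G) * (V_\lambda \otimes F)$, and by Lemma \ref{lemma reduction 1}\eqref{lemma reduction 1 - item 2} we have $V^\omega(r) = \varpi\big((V_\lambda \otimes F)^\omega(r)\big)$, it suffices — using also that $\big(C_R^\infty(G)*V_\lambda\big)\otimes F$ embeds continuously into $C_R^\infty(G)*(V_\lambda \otimes F)$ as in Lemma \ref{lemma reduction 1}\eqref{lemma reduction 1 - item 1} — to prove the continuous embedding $C_R^\infty(G) * V_\lambda \subset V_\lambda^\omega(r)$ whenever $\kf_r \subset \kf(R)$.

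So fix $\lambda \in \af_\C^*$ and work in the compact model $V_\lambda^\infty = C^\infty(K/M)$. Given $\phi \in C_R^\infty(G)$ and $v \in V_\lambda$, the vector $\phi * v = \pi_\lambda(\phi)v = \int_{B_R} \phi(g)\, \pi_\lambda(g) v\, dg$ is an integral over $B_R = K A_R K$ of the vectors $\pi_\lambda(g)v$. The key point is that for each $g \in B_R$ the function $X \mapsto \pi_\lambda(\exp(iX)g)v$, a priori defined for $X$ in a neighbourhood of $0$ in $\kf$, extends holomorphically to $\exp(iX) \in \exp(i\kf(R))$: indeed $\exp(iX) B_R \cdot z_0 \subset \tilde\Xi$ by the definition \eqref{def k(R) 2} of $\kf(R)$, and on the crown the orbit map of the spherical principal series extends holomorphically (the references \cite{K-S1}, \cite{KS} quoted in the introduction; in the compact picture this is the holomorphic extension of $k \mapsto \mathbf{a}(\cdot)$ and $\mathbf{k}(\cdot)$ to $\tilde\Xi^{-1}$, which is available by the discussion in Section 3). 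Hence for $\kf_r \subset \kf(R)$ the orbit map $k \mapsto \pi_\lambda(k)(\phi*v)$ of the vector $\phi * v$ extends holomorphically to $K_\C(r) = K\exp(i\kf_r)$: one writes $\pi_\lambda(k\exp(iX))(\phi * v) = \int_{B_R} \phi(g)\, \pi_\lambda(k\exp(iX)g)v\, dg$ and differentiates under the integral sign, the integrand being holomorphic in $X$ for $\|X\| < r$ and jointly continuous in $(X, g)$ on a compact $g$-set. This shows $\phi * v \in V_\lambda^\omega(r)$, i.e. $V_R^{\rm min} \subset V_\lambda^\omega$; and by Lemma \ref{lemma K-omega}, the $K$-analytic vector $\phi*v$ is analytic.

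For continuity one estimates the defining seminorms of $V_\lambda^\omega(r)$ (equivalently, by Lemma \ref{lemma K-analytic}, the exponential-$K$-type seminorms, or directly the sup-over-compacta seminorms on $K_\C(r)$) against the quotient seminorms on $V_R^{\rm min}$. Choosing a finite-dimensional generating subspace $V_f \subset V_\lambda$, the quotient topology on $V_R^{\rm min}$ comes from $C_R^\infty(G) \otimes V_f \twoheadrightarrow V_R^{\rm min}$; so it is enough to bound $\sup_{k \in L} p\big(\pi_\lambda(k)(\phi*v)\big)$ for $L \subset K_\C(r)$ compact and $v \in V_f$ in terms of a Sobolev-type seminorm of $\phi$ on $B_R$. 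This follows from the integral formula above together with uniform bounds for $\pi_\lambda(k\exp(iX)g)v$ over $(k\exp(iX), g)$ in the compact set $L \times B_R$, which hold because $L \cdot B_R \cdot z_0$ is a compact subset of the crown $\Xi$ and the holomorphically extended orbit maps are continuous (hence bounded) there. Then one invokes the open mapping / Grothendieck factorization theorem as in Lemma \ref{lemma K-omega} to conclude that the inclusion $V_R^{\rm min} \hookrightarrow V_\lambda^\omega(r)$ is continuous as a map of (LF- or Fréchet) spaces. The main obstacle is the bookkeeping for this continuity estimate — making precise that the holomorphic extension to the crown is locally uniform in all parameters and transferring that to a clean bound between the two families of seminorms; the existence part is essentially immediate once the crown extension of the spherical orbit maps is granted.
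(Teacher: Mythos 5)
Your proposal is correct and follows essentially the same route as the paper: reduction to a spherical principal series $V_\lambda$ via the quotient $V_\lambda\otimes F\twoheadrightarrow V$ (with $F$ a $G_\C$-module), followed by holomorphic extension of the convolution integral over $B_R$ using that $\kf_r\subset\kf(R)$ forces $B_RK_\C(r)\subset\tilde\Xi^{-1}$, where $\mathbf{a}$ and $\mathbf{k}$ (and the $K$-finite vector, as an element of $\C[K_\C/M_\C]$) extend holomorphically. Your continuity discussion is in fact more detailed than the paper's, which simply asserts the continuity of the embedding from the same uniform bounds on compacta.
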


\begin{proof} We first reduce to the case where $V=V_\lambda$ is a spherical principal series.
We recall from Section \ref{Subsection reduction} that $V$ is a quotient of some $V_\lambda \otimes F$, with $F$ a finite dimensional representation. Now all matrix coefficients of $F$ extend holomorphically to
$G_\C$, and this completes the reduction to $V=V_\lambda$.
\par  We work in the compact model
of $V_\lambda$. Let
$v=\pi(f)w$ for some $w\in V$ and
$f\in C_R^\infty(G)$. Then we note that for $k\in K$
\begin{equation}\label{eq Formula v(k)}
v(k)=\pi(f)(w)(k)=\int_{B_R} f(g) w(g^{-1}k) \ dg \, .
\end{equation}
Observe that $w(g^{-1}k)= w ({\bf k}(g^{-1}k)) {\bf a}(g^{-1} k)^{-i\lambda - \rho}$. As $w\in \C[K_\C/M_\C]$, $w$ is a holomorphic function  on $K_\C/ M_\C$.  Thus with
$B_R K_\C(r) \subset \tilde\Xi^{-1} \subset K_\C A_\C N_\C$, we conclude that ${\bf a}$ and ${\bf k}$ are defined on $B_R K_\C(r)$ and
holomorphic. Thus  $v$ extends to the holomorphic function on $K_\C(r)$ given by (\ref{eq Formula v(k)}). This shows the continuous embedding for this case.
\end{proof}

\section{Preliminaries on the analytic inclusion}

\subsection{$K$-type expansion of functions on $K/M$}
In the following we view functions on $K/M$ as right $M$-invariant functions on $K$. For any $\tau\in \hat K$
we fix a model (finite dimensional) Hilbert space $V_\tau$. For $\tau\in \hat K$ we write $\tau^\vee$ for the dual representation.
We then obtain for each $\tau\in \hat K$ a $K\times K$-equivariant
realization of $V_\tau\otimes V_{\tau^\vee}$ as polynomial functions on $K$:
$$ V_\tau \otimes V_{\tau^{\vee}}  \to \C[K_{\C}], \ \ v\otimes v^\vee \mapsto
m_{v, v^\vee}; \ m_{v, v^\vee}(k):= v^\vee(k^{-1} v)\ ,$$
where $K\times K$ acts on $\C[K_{\C}]$ by the left-right regular representation.
We arrive at the $K\times K$-isomorphism of $K\times K$-modules
$$ \C[K_{\C}]=\bigoplus_{\tau \in \hat K} V_\tau \otimes V_{\tau^\vee}\ ,$$
and taking right $M$-invariants at the $K$-isomorphism of $K$-modules
$$\C[K_{\C}/M_{\C}]= \bigoplus_{\tau \in \hat K_M} V_\tau\otimes V_{\tau^\vee}^M\ ,$$
where $\hat K_M\subset \hat K$ is the $M$-spherical part of $\hat K$.
Fix $\tau$ and identify $V_{\tau^\vee}\simeq V_\tau^*$. In particular, the unitary norm on $V_\tau$ induces the unitary
dual norm on $V_{\tau^\vee}$ and we write $\|\cdot\|_\tau$ for the Hilbert-Schmidt norm on $V_\tau\otimes V_{\tau^\vee}$.
We recall that $\|\cdot\|_\tau$ is independent of the particular unitary norm on $V_\tau$ (which is unique up to positive scalar by Schur's Lemma) and is thus intrinsically defined.
Any function on $f\in \C[K_{\C}]$ we now expand into $K$-types $f=\sum_{\tau\in \hat K} f_\tau$ with $f_\tau \in V_\tau\otimes
V_{\tau^\vee}$.  With that we record the well known Fourier characterizations of
$C^\infty(K)$ and $C^\omega(K)$ as
\begin{equation*} 
C^\infty(K)=\{ f = \sum_{\tau\in \hat K} f_\tau\mid (\forall N\in \N)\  \sum_{\tau\in \hat K} ( 1+|\tau|)^N \|f_\tau\|_\tau<\infty\}
\end{equation*}
and
\begin{equation*} 
C^\omega(K)=\{ f=\sum_{\tau \in \hat K}f_\tau  \mid(\exists r>0)\ \sum_{\tau\in \hat K} e^{r |\tau|} \|f_\tau\|_\tau<\infty\}\, .
\end{equation*}
Taking right $M$-invariants, we obtain corresponding Fourier characterizations of $C^\infty(K/M)$ and
$C^\omega(K/M)$.

\subsection{The Helgason Paley-Wiener Theorem}

We begin with a short review of the Fourier transform on $Z=G/K$ and recollect some notation.
For $\lambda\in \af_\C^*$ we denote by $V_\lambda$ the Harish-Chandra module of the $K$-spherical principal
series with parameter $\lambda$ as defined before.
Recall  that $V_\lambda^\infty = C^\infty(K/M)$ as $K$-module.  We denote by $v_{K,\lambda}={\bf 1}_{K/M}\in V_\lambda$
the constant function.

\par For every $R>0$ we let $\PW(\af_\C^*, C^\infty(K/M))_R$ be the space of holomorphic functions
$f: \af_\C^* \to C^\infty(K/M)$, so that for every continuous semi-norm $q$ on $C^\infty(K/M)$ and $N\in \N$  one has
\begin{equation}\label{eq Paley-Wiener estimate}
\sup_{\lambda\in \af_\C^*}  q (f(\lambda))  (1 +\|\lambda\|)^N e^{-R \|\im \lambda\|} <\infty\, .
\end{equation}
Further we denote
$$\PW(\af_\C^*, C^\infty(K/M))= \bigcup_{R>0} \PW(\af_\C^*, C^\infty(K/M))_R$$
and refer to it as the Paley-Wiener space on $\af_\C^*$ with values in $C^\infty(K/M)$.

The Fourier transform on $Z$ is then defined  by
\begin{align*}
&\F : C_c^\infty(Z) \to \PW(\af_\C^*, C^\infty(K/M))\ ,\\
\label{eq def F}&f\mapsto \F(f); \ \F(f)(\lambda):= \pi_\lambda(f)v_{K,\lambda}\, .
\end{align*}
Note that
$$
\F(f)(\lambda)(kM)
= \int_Z f(gK) {\bf a} (g^{-1} k)^{-i\lambda -\rho} \ d(gK)\qquad (k\in K)\,.
$$
It is convenient to write $\F(f)(\lambda, kM)$ for $\F(f)(\lambda)(kM)$.

In order to describe the image of $\F$, we recall the Weyl group $W$ of the restricted root system $\Sigma=\Sigma(\af, \gf)$.
Attached to  $w\in W$ there is a meromorphic family of standard intertwining operators
$$ I_{w,\lambda}:  V_\lambda^\infty \to V_{w\lambda}^\infty\, .$$
Further we recall that $I_{w,\lambda} (v_{K,\lambda}) =  {\bf c}_w(\lambda) v_{K,w\lambda}$ for a meromorphic and explicit
function ${\bf c}_w$ ($w$-partial Harish-Chandra ${\bf c}$-function, calculated by Gindikin-Karpelevic).  We define the normalized intertwining operator by  $J_{w,\lambda}:=\frac{1}{{\bf c}_w(\lambda)}  I_{w,\lambda}$. We recall that $\lambda \mapsto J_{w,\lambda}$ is meromorphic on $\af_{\C}^{*}$, and holomorphic on an open neighborhood of the cone
$$
\{\lambda\in\af_{\C}^{*}:\re\big(i\lambda(\alpha^{\vee})\big)\geq0 \text{ for all }\alpha\in \Sigma^{+}\cap w^{-1}\Sigma^{-}\}\ .
$$
It is clear from the definitions that every Fourier transform
$\phi=\F(f)$ satisfies the {\it intertwining relations}
\begin{equation} \label{I-relation}
 J_{w,\lambda}( \phi(\lambda) )
= \phi(w\lambda) \qquad (w\in W, \lambda\in \af_\C^*) \, .
\end{equation}

Let $\PW_W(\af_\C^*, C^\infty(K/M))$ be the subspace of $\PW(\af_\C^*, C^\infty(K/M))$ of Paley-Wiener functions that satisfy
all intertwining relations (\ref{I-relation}).
Then Helgason's Paley-Wiener theorem \cite[Theorem 8.3]{H2} states that
\begin{equation} \label{H-PW}
\F(C_R^\infty(Z))
=\PW_W(\af_\C^*, C^\infty(K/M))_R \qquad (R>0)\, .
\end{equation}

\subsection{Intertwining relations on $K$-types}
For any $\tau\in\hat{K}$ and $\lambda\in\af_{\C}^{*}$ we have
\begin{equation}\label{eq identification V_lambda[tau]}
V_\lambda[\tau]=C^\infty(K/M)[\tau]= V_\tau \otimes V_{\tau^\vee}^M
\end{equation}
as $K$-modules, where $V_{\tau^{\vee}}=V_{\tau}^{*}$.
We denote by $J_{w,\lambda}[\tau]$ the restriction of $J_{w,\lambda}$ to
$V_\lambda[\tau]$ and observe that $J_{w,\lambda}[\tau]:V_{\lambda}[\tau]\to V_{w\lambda}[\tau]$. Within the identification (\ref{eq identification V_lambda[tau]}) we then obtain
$$J_{w,\lambda}[\tau]\in \End_K (V_\tau \otimes V_{\tau^\vee}^M) \simeq \End(V_{\tau^\vee}^M)\, .$$

\par Next we recall Kostant's factorization of $J_{w,\lambda}[\tau]$.   In general, if $\ef\subset \gf$ is a subspace, we
denote by $\Sc(\ef)$ the symmetric algebra and by $\Sc^{\star}(\ef)$  the image of $\Sc(\ef)$ in $\U(\gf)$ under
the symmetrization map. From the Cartan decomposition $\gf=\sf+\kf$ and the PBW-theorem we thus obtain the direct sum decomposition
$$
\U(\gf)
= \Sc^{\star}(\sf) \oplus U(\gf)\kf\, .
$$
Next, according to \cite[Th.~15]{KR} we have $\Sc(\sf)= \Hc(\sf)\otimes \Ic(\sf)$, where $\Hc(\sf)$ denotes the harmonic polynomials
on $\sf_\C^*$ and $\Ic(\sf)$ the $K$-invariant polynomials on $\sf_\C^*$. We derive the refined decomposition
\begin{equation}\label{eq U(g)-decomposition}
\U(\gf)
= \Hc^{\star}(\sf) \Ic^{\star}(\sf) \oplus \U(\gf) \kf\ .
\end{equation}
Consequently we have for all $\lambda\in \af_\C^*$ that
$$ d\pi_\lambda(\U(\gf))v_{K,\lambda} = d\pi_\lambda(\Hc^{\star}(\sf)) v_{K,\lambda}\, .$$
We recall from Lemma \ref{lemma quotient} that in case $\lambda$ satisfies the Kostant condition \eqref{Kos}, the vector $v_{K,\lambda}$ is cyclic in $V_\lambda$ for $\U(\gf)$. In general we have for each $\tau\in \hat K$ the $K$-equivariant maps
$$ Q_\tau(\lambda): \Hc^{\star}(\sf)[\tau]\to V_\lambda[\tau]= V_\tau\otimes V_{\tau^\vee}^M, \ \ D\mapsto d\pi_\lambda(D)v_{K,\lambda}\ ,$$
which are isomorphisms if $\lambda$ satisfies \eqref{Kos}, see \cite[Cor.~to Prop.~4  and Cor.~to Th.~7]{Kos}.
(In \cite{Kos} the polynomials $Q_{\tau}$ are denoted by $P^{\tau}$. Compared to the polynomials defined in \cite[p. 238]{H3} there is a sign difference in the argument.)

For fixed $\tau\in \hat K_M$ we recall that the assignment
$$\af_\C^* \ni \lambda \to Q_\tau(\lambda)\in \Hom_K(   \Hc^{\star}(\sf)[\tau], V_\tau \otimes V_{\tau^\vee}^M )$$
is polynomial.
Since $J_{w,\lambda} v_{K,\lambda} = v_{K, w\lambda}$, we obtain the relation
$$
J_{w,\lambda}[\tau] \circ Q_\tau( \lambda) = Q_\tau({w\lambda})\ ,
$$
and as a consequence Kostant's factorization
\begin{equation} \label{factor tau} J_{w,\lambda}[\tau]= Q_\tau( w\lambda)\circ Q_\tau(\lambda)^{-1}\ ,\end{equation}
which exhibits $J_{w,\lambda}[\tau]$ for fixed $\tau\in \hat K_M$ as a rational vector-valued  function
$$\af_\C^* \ni \lambda \mapsto J_{w,\lambda}[\tau]\in \End(V_{\tau^\vee}^M)\, .$$

\begin{rmk}\label{Rem Q polynomials}
To understand the polynomial dependence of $\lambda\mapsto Q_\tau(\lambda)$ better, it proves
useful to introduce a normalization.
Set
$$ \tilde Q_\tau(\lambda):=Q_\tau(\lambda)\circ Q_\tau(0)^{-1} \in \End_K (V_\tau\otimes
V_{\tau^\vee}^M)\simeq \End(V_{\tau^\vee}^M)\, .$$
Hence $\tilde Q_\tau(0)=\id$ and we can, upon fixing a basis
of the vector space $V_{\tau^\vee}^M$, view $\tilde{Q}_{\tau}$ as a polynomial function on $\af_{\C}^{*}$ with values in the space of  $l(\tau)\times l(\tau)$-matrices, where $l(\tau):= \dim V_{\tau^\vee}^M$.
\end{rmk}

\begin{rmk} In case $G$ has real rank one, the subgroup $M\subset K$ is symmetric and thus
$V_{\tau^{\vee}}^M$ is one-dimensional for all $\tau\in \hat K_M$. In this case, for fixed $\tau\in \hat K_M$ the map
$$\lambda\mapsto \tilde Q_\tau(\lambda)$$
is an explicitly computable polynomial in $\lambda$ (see \cite[Ch.~III, Cor.~11.3]{H3}), and consequently $\lambda\mapsto J_{w, \lambda}[\tau]$ is a scalar-valued rational function.
\par Specifically, let now $G=\Sl(2,\R)$ with $K=\SO(2,\R)$ and $A$ as before.  We identify $\hat K_M$ with $\Z$
and $\af_\C^*$  with $\C$ via $\C \ni\lambda\mapsto \lambda \rho$.
Then for $n=\tau\in \Z$
\begin{align*}
\tilde Q_n(\lambda)
&= \frac{\Gamma\left( \frac{1}{2} (i\lambda + \rho)(\alpha^\vee) +|n| \right)\Gamma\left(\frac{1}{2}\rho(\alpha^\vee)\right) }{
\Gamma\left( \frac{1}{2} (i\lambda + \rho)(\alpha^\vee)\right)\Gamma\left(\frac{1}{2} \rho(\alpha^\vee)+|n|\right) }
=\frac {\Gamma\left(\frac{1}{2}  (i\lambda + 1) +|n| \right)\Gamma\left(\frac{1}{2}\right)}
    {\Gamma\left( \frac{1}{2}  (i\lambda + 1)\right)\Gamma\left(\frac{1}{2}  +|n|\right)}\\
&= \frac{(1+i\lambda) ( 3+i\lambda)\cdot\ldots \cdot (2|n|-1 +i\lambda)}{1\cdot 3\cdot\ldots \cdot (2|n|-1)}\, .
\end{align*}
Then, for all $n\in \Z=\hat K_M$ and $\lambda\in \C=\af_\C^*$ and $w\in W$ the non trivial element, the map $J_{w,\lambda}[n]$ is given by the scalar
$$
J_{w,\lambda}[n]
=\frac{(1-i\lambda) ( 3-i\lambda)\cdot\ldots \cdot (2|n|-1 -i\lambda)}{(1+i\lambda) ( 3+i\lambda)\cdot\ldots \cdot (2|n|-1 +i\lambda)}\ .
$$
\end{rmk}

\section{Strategy of proof}\label{strategy}
In this section we describe the general strategy of proof for the analytic inclusion.
The approach is simpler when $G/K$ has rank one, and therefore we give a separate proof for that.
The strategy for rank one is described through the following Ansatz~1. The general case is treated in Ansatz~2.

\subsection{Ansatz 1}\label{subsection Ansatz 1}
We consider a module $V_{\lambda_0}$, where $\lambda_0$ satisfies \eqref{Kos}.
Let $r>0$ and $v\in V_{\lambda_0}^\omega(r)$, i.e.~$v=\sum_{\tau \in \hat K_M} v_\tau$ with $v_\tau\in V_{\lambda_0}[\tau]= V_\tau \otimes V_{\tau^\vee}^M$,
so that
$$
\sum_{\tau\in \hat K_M} e^{r'|\tau|} \|v_\tau\|_\tau<\infty
\qquad(0<r'<r)\,.
$$
We make the following ansatz. First, let
$$
F(\lambda)
= F_v(\lambda) = \sum_{\tau\in \hat K_M} u_\tau(\lambda)\ ,
$$
where
$$
\af_\C^* \ni \lambda\to u_\tau(\lambda)\in V_\tau\otimes V_{\tau^\vee}^M
$$
is a certain holomorphic function  such that $u_\tau(\lambda_0)=v_\tau$.  Specifically, we set
$$
u_\tau(\lambda)
= \phi_\tau(\lambda) Q_\tau(\lambda) \circ Q_\tau(\lambda_0)^{-1} v_\tau\ ,
$$
where $\phi_\tau\in \Oc(\af_\C^*)^W$ is a $W$-invariant holomorphic function with $\phi_\tau(\lambda_0)=1$.  Suppose that
the series defining $F(\lambda)$ converges locally uniformly, so that $F_v\in \Oc(\af_\C^*, C^\infty(K/M))$.
Then we observe with \eqref{factor tau} and the $W$-invariance of $\lambda \mapsto \phi_\tau(\lambda)$ that
\begin{align*}
 J_{w,\lambda} F(\lambda)
&= \sum_{\tau\in \hat K_M}\phi_\tau(\lambda) J_{w,\lambda}[\tau] \circ Q_\tau(\lambda) \circ Q_\tau(\lambda_0)^{-1}  v_\tau\\
&= \sum_{\tau\in \hat K_M} \phi_\tau(\lambda)
Q_\tau (w\lambda)\circ Q_\tau(\lambda_0)^{-1} v_\tau
=  F(w\lambda)\ .
\end{align*}
In other words $\lambda\mapsto F(\lambda)$ satisfies the intertwining relations. If we can now construct the
$\phi_\tau$ in such a way that
$F\in \PW(\af_\C^*, C^\infty(K/M))_R$
for some $R=R(r)$, then the Paley-Wiener theorem \eqref{H-PW} implies the existence of an
$f \in C_R^{\infty}(Z)$
such that $\F(f) =F$. In particular, we obtain $v = \pi_{\lambda_0}(f) v_{K, \lambda_0}$, that is

$$
V_{\lambda_0}^\omega(r)
\subset C_R^{\infty}(G) * V_{\lambda_0}\, .
$$

We follow this ansatz for the rank $1$ spaces in Section \ref{Section rank one cases}.

\subsection{Ansatz 2}\label{subsection Ansatz 2}  For the second ansatz we need some terminology.
We denote by $\Mf(\af_\C^*, C^\infty(K/M))$ the space of $C^\infty(K/M)$-valued meromorphic
functions on $\af_\C^*$. We recall that a vector-valued function $f$ on $\af_{\C}^{*}$ is called meromorphic provided that for all $\lambda_{0}\in\af_{\C}^{*}$ there exists an open neighborhood $U$ of $\lambda_{0}$ and a polynomial $p(\lambda)$ so that $\lambda\mapsto p(\lambda)f(\lambda)$ extends to a holomorphic function on $U$.
In this regard we recall that $\Hc_\lambda^\infty=C^\infty(K/M)$
as $K$-modules for every $\lambda\in \af_\C^*$.  We then view an element
$f\in \Mf(\af_\C^*, C^\infty(K/M))$ as a section of the bundle $\coprod_{\lambda\in \af_\C^*} \Hc_\lambda^\infty\to \af_\C^*$, i.e.~we consider $f(\lambda)\in \Hc_\lambda^\infty$.
The key observation is that the prescription
$$
W \times \Mf(\af_\C^*, C^\infty(K/M))\to \Mf(\af_\C^*, C^\infty(K/M)), \ \ (w, f)\mapsto w\circ f;
$$
$$
(w \circ f )(\lambda):= J_{w, w^{-1}\lambda} f (w^{-1} \lambda)\qquad (\lambda\in \af_\C^*)\, .
$$
defines an action of $W$ and, moreover,  a meromorphic function $f$
satisfies the intertwining relations if and only if
it is $W$-invariant for this action.

Now we come to the ansatz proper. Fix $\lambda_0\in \af_\C^*$ which satisfies
the Kostant condition \eqref{Kos}, and let $W_{\lambda_0}\subset W$ be the stabilizer of $\lambda_0$.
As $\lambda_0$ satisfies \eqref{Kos}, it follows that $J_{w,w^{-1}\lambda_0}= J_{w, \lambda_0}$ is defined for all $w\in W_{\lambda_0}$ and constitutes
an intertwining operator $J_{w, \lambda_0}: V_{\lambda_0}^\infty\to V_{\lambda_0}^\infty$ with
$J_{w,\lambda_0}(v_{K, \lambda_0})= v_{K,\lambda_0}$. The fact that  $v_{K,\lambda_0}$ is fixed by $J_{w,\lambda_0}$ and that
$v_{K,\lambda_0}$ is cyclic for $V_{\lambda_0}$ (see Lemma \ref{lemma quotient}) implies
that $J_{w,\lambda_0}$ is equal to the identity on $V_{\lambda_{0}}$ and hence also on $V^{\infty}_{\lambda_{0}}$.

Let now $v \in V_{\lambda_0}^\omega(r)$. Let further $f_v:\af_\C^*\to C^\infty(K/M)$ be a
holomorphic function satisfying the properties
\begin{itemize}
\item $ f_{v}(\lambda_0)= \frac{1}{|W_{\lambda_0}|} v$,
\item $f_{v}(w\lambda_0)=0$ if $w\in W\setminus W_{\lambda_{0}}$.
\end{itemize}
Given a choice for $f_v$, we define a meromorphic function  by
$
 \Ac(f_v)
 :=\sum_{w\in W} w \circ f_v
$,
and note that $\Ac(f_v)$ automatically satisfies the intertwining relations.
Moreover,
\begin{equation}\label{property of Ansatz 2}
\Ac(f_v)(\lambda_0)
=\sum_{w\in W} J_{w, w^{-1}\lambda_0}f_v(w^{-1}\lambda_0)
=\sum_{w\in W_{\lambda_0}}  J_{w, \lambda_0} f_v(\lambda_0)
= v\, ,
\end{equation}
i.e.~$\Ac(f_v)$ interpolates $v$ at $\lambda=\lambda_0$.

The difficulty  is that the operators $J_{w, w^{-1}\lambda}$ have poles and the function $f_{v}$ has to be chosen carefully, so that $\Ac(f_v)$ is indeed holomorphic and satisfies
the Paley-Wiener condition for some $R=R(r)>0$. The overall strategy
is to start with a simple minded function $f_{v}(\lambda) = p_{\lambda_0}(\lambda)v $ for some polynomial
$p_{\lambda_0}$ and then modify $f_{v}$ along its $K$-isotypical components, i.e.~ for each $\tau\in\hat{K}$ we replace $p_{\lambda_0} (\lambda)v_\tau$ by
$\phi_\tau(\lambda) p_{\lambda_0}(\lambda)v_\tau$ for some appropriate
holomorphic function $\phi_\tau$.

This ansatz is used for the general case in Section \ref{Section higher rank case}.

\begin{rmk} Compared to the first ansatz this approach is computationally more complex, as we have to average over the Weyl group $W$, and in addition the functions $\phi_{\tau}$ have to be such that the poles of the rational functions $J_{w,\lambda}$ are canceled. However, the advantage of this
ansatz is that intertwining operators, in contrast to the $Q$-polynomials, factor into rank one intertwiners, which can be explicitly
computed and estimated.
\end{rmk}

\subsection{An application of Helgason's Paley-Wiener theorem}

The following proposition will be used in
the implementation of both ansatzes.

We define $\Hc$ to be the $K$ representation $L^{2}(K/M)$. Accordingly, we write $\Hc^{\infty}$ and $\Hc^{\omega}$ for $C^{\infty}(K/M)$ and $C^{\omega}(K/M)$, respectively. For each $r>0$ we define a Fr\'echet space by
$$
\Hc^{\omega}(r)
:=\{v\in \Hc^{\omega}\mid (\forall 0<r'<r)\  \sum_{\tau\in\hat{K}_{M}}e^{r'|\tau|}\|v_{\tau}\|<\infty\}
$$
with the indicated seminorms. We write $\Hc_{\tau}$ for the $\tau$-component of $\Hc$.

\begin{prop}\label{Prop inverse PW}
Let $r,R>0$ and $\lambda_{0}\in \af_{\C}^{*}$. Consider a family $(F_{\tau})_{\tau\in\hat{K}_{M}}$
of holomorphic functions $F_\tau:\af_{\C}^{*}\to \End(\Hc_{\tau})$ satisfying the following conditions.
\begin{enumerate}[(i)]
\item\label{Prop inverse PW - assumption 1} For every $\tau\in\hat{K}_{M}$ we have $F_{\tau}(\lambda_{0})=\id$.
\item\label{Prop inverse PW - assumption 2} For every $\tau\in\hat{K}_{M}$, $w\in W$ and $\lambda\in\af_{\C}^{*}$ the intertwining relation holds
$$
F_{\tau}(w\cdot \lambda)
=J_{w,\lambda}\circ F_{\tau}(\lambda).
$$
\item\label{Prop inverse PW - assumption 3}
There exist a real number $0<r'<r$, integers $j,l\in\N_{0}$, and a constant $C'>0$ so that for all $\tau\in\hat{K}_{M}$ and $\lambda\in \af_{\C}^{*}$
$$
\|F_{\tau}(\lambda)\|_{\mathrm{op}} \leq C'  (1+|\tau|)^{j}\,e^{r' |\tau|}\,(1+\|\lambda\|)^l\,e^{ R\, \|\im \lambda\|}\,.
$$
\end{enumerate}
Then for every $\epsilon>0$ there exists a continuous linear map $\varphi: \Hc^{\omega}(r)\to C_{R+\epsilon}^{\infty}(G/K)$ so that
$$
v=\varphi(v)*v_{K,\lambda_{0}}
\qquad\big(v\in\Hc^{\omega}(r)\big) \,.
$$
\end{prop}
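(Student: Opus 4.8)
The plan is to convert the family $(F_\tau)_{\tau\in\hat K_M}$ into a single holomorphic $C^\infty(K/M)$-valued function on $\af_\C^*$ that interpolates a given $v\in\Hc^\omega(r)$ at $\lambda_0$ and satisfies the intertwining relations \eqref{I-relation}, and then --- after multiplying by one scalar $W$-invariant Paley--Wiener multiplier of exponential type $\epsilon$ --- to place it inside $\PW_W(\af_\C^*,C^\infty(K/M))_{R+\epsilon}$. Helgason's Paley--Wiener theorem \eqref{H-PW} then produces a function in $C_{R+\epsilon}^\infty(G/K)$, and evaluating the Fourier transform at $\lambda=\lambda_0$ gives the desired identity $\varphi(v)*v_{K,\lambda_0}=v$.

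First I would fix $v=\sum_{\tau\in\hat K_M}v_\tau\in\Hc^\omega(r)$ and set
$$
F_v(\lambda):=\sum_{\tau\in\hat K_M}F_\tau(\lambda)v_\tau .
$$
The continuous seminorms on $C^\infty(K/M)$ are dominated by the $K$-type seminorms $g\mapsto\sum_\tau(1+|\tau|)^N\|g_\tau\|_\tau$, and $\|F_\tau(\lambda)v_\tau\|_\tau\le\|F_\tau(\lambda)\|_{\mathrm{op}}\|v_\tau\|_\tau$; hence assumption (\ref{Prop inverse PW - assumption 3}) gives, for each $N$,
$$
\sum_{\tau}(1+|\tau|)^N\|F_\tau(\lambda)v_\tau\|_\tau\le C'(1+\|\lambda\|)^l e^{R\|\im\lambda\|}\sum_{\tau}(1+|\tau|)^{N+j}e^{r'|\tau|}\|v_\tau\|_\tau .
$$
Choosing $r''$ with $r'<r''<r$ and absorbing the polynomial factor into $e^{(r''-r')|\tau|}$, the last sum is finite and is, as a function of $v$, a continuous seminorm on $\Hc^\omega(r)$. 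Thus the series for $F_v$ converges locally uniformly in every seminorm, so $F_v\in\Oc(\af_\C^*,C^\infty(K/M))$; assumption (\ref{Prop inverse PW - assumption 1}) gives $F_v(\lambda_0)=\sum_\tau v_\tau=v$, and assumption (\ref{Prop inverse PW - assumption 2}), read off on each $K$-type, gives $J_{w,\lambda}(F_v(\lambda))=F_v(w\lambda)$ for all $w\in W$, i.e.\ the intertwining relations \eqref{I-relation}. The displayed bound also shows that for every continuous seminorm $q$ on $C^\infty(K/M)$ one has $q(F_v(\lambda))\le c_q(v)(1+\|\lambda\|)^l e^{R\|\im\lambda\|}$ with $c_q$ a continuous seminorm on $\Hc^\omega(r)$ --- so $F_v$ is of exponential type $R$ but only grows polynomially in $\lambda$ rather than decaying, and is therefore not yet in the Paley--Wiener space.

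To fix this, I would choose once and for all a scalar $W$-invariant holomorphic function $\psi$ on $\af_\C^*$ with $\psi(\lambda_0)=1$ that is rapidly decreasing along the real directions and of exponential type $\epsilon$, i.e.\ $\sup_\lambda|\psi(\lambda)|(1+\|\lambda\|)^N e^{-\epsilon\|\im\lambda\|}<\infty$ for every $N$. Such a $\psi$ exists because evaluation at $\lambda_0$ is a non-zero, hence surjective, linear functional on the $W$-invariant scalar Paley--Wiener space of type $\epsilon$ (one may take a $W$-symmetrization of a Euclidean Paley--Wiener function, suitably rescaled). Since $\psi$ is a $W$-invariant scalar, $G_v:=\psi\cdot F_v$ still satisfies the intertwining relations and still has $G_v(\lambda_0)=v$, while the product of the two bounds gives, for every $N$ and every seminorm $q$,
$$
q(G_v(\lambda))\le c_{q,N}(v)\,(1+\|\lambda\|)^{l-N}e^{(R+\epsilon)\|\im\lambda\|},
$$
with $c_{q,N}$ a continuous seminorm on $\Hc^\omega(r)$; hence $G_v\in\PW_W(\af_\C^*,C^\infty(K/M))_{R+\epsilon}$ and $v\mapsto G_v$ is continuous. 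By Helgason's Paley--Wiener theorem \eqref{H-PW} there is a unique $\varphi(v)\in C_{R+\epsilon}^\infty(G/K)$ with $\F(\varphi(v))=G_v$; evaluating at $\lambda_0$ yields $\varphi(v)*v_{K,\lambda_0}=\pi_{\lambda_0}(\varphi(v))v_{K,\lambda_0}=G_v(\lambda_0)=v$. Linearity of $\varphi$ follows from uniqueness, and continuity from the fact that $\F$ is a continuous linear bijection between the Fr\'echet spaces $C_{R+\epsilon}^\infty(G/K)$ and $\PW_W(\af_\C^*,C^\infty(K/M))_{R+\epsilon}$, hence an isomorphism by the open mapping theorem, composed with the continuous assignment $v\mapsto G_v$.

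The deep analytic input --- the existence of the families $(F_\tau)$ --- is produced separately (Ansatz~1 and Ansatz~2), so within this proposition the only points requiring care are bookkeeping ones: checking that the intertwining relation of (\ref{Prop inverse PW - assumption 2}), which a priori only holds where the $J_{w,\lambda}$ are regular, does propagate to a genuine holomorphic identity for $F_v$ so that no spurious poles survive; tracking that the polynomial degree $l$ in (\ref{Prop inverse PW - assumption 3}) is independent of the seminorm $q$ so that a single multiplier $\psi$ suffices; and verifying that multiplying by $\psi$ enlarges the exponential type by exactly $\epsilon$, which is precisely why the target is $C^\infty_{R+\epsilon}(G/K)$ rather than $C^\infty_R(G/K)$.
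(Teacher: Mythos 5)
Your construction is essentially the paper's: you form $F_v(\lambda)=\sum_\tau F_\tau(\lambda)v_\tau$, verify convergence and the intertwining relations exactly as in the paper, multiply by a scalar $W$-invariant Paley--Wiener multiplier of type $\epsilon$ normalized to be $1$ at $\lambda_0$, and invoke Helgason's theorem \eqref{H-PW}. The paper realizes the multiplier concretely as $\hat\theta$ for a $K$-bi-invariant function $\theta\in C^\infty_\epsilon(K\backslash G/K)$ from a Dirac sequence with $\theta*v_{K,\lambda_0}=v_{K,\lambda_0}$; your Euclidean symmetrization produces the same object, though the non-vanishing of evaluation at $\lambda_0$ on the $W$-invariant type-$\epsilon$ space is itself a (small) Dirac-sequence argument that you assert rather than carry out.

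The genuine divergence is in the continuity of $v\mapsto\varphi(v)$. You apply the open mapping theorem to $\F:C^\infty_{R+\epsilon}(G/K)\to\PW_W(\af_\C^*,C^\infty(K/M))_{R+\epsilon}$; for this you need that the target, with the seminorms \eqref{eq Paley-Wiener estimate}, is a Fr\'echet space (completeness, plus closedness of the intertwining conditions) and that $\F$ is continuous and bijective onto it --- all true and standard, but these are exactly the points your argument silently relies on, since \eqref{H-PW} as cited is only a set-theoretic equality of images. The paper instead avoids topologizing the Paley--Wiener space: it bounds $\|\varphi'_v\|_{L^2}$ via the Plancherel theorem and the decay estimate on $f_v$, and then sets $\varphi_v=\varphi'_v*\theta$, using the extra convolution with the fixed smooth compactly supported $\theta$ to convert the $L^2$ bound into every $C^\infty$ seminorm (at the harmless cost of enlarging the support radius by another $\epsilon$). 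Your route is shorter if one grants the Fr\'echet structure on $\PW_W$; the paper's is more self-contained. Either way the argument is correct, so this is a presentational rather than a substantive gap --- but if you keep the open-mapping route you should state and justify the completeness of $\PW_W(\af_\C^*,C^\infty(K/M))_{R+\epsilon}$ explicitly.
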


\begin{proof}
For $k\in\N_{0}$, let $p_{k}$ be the continuous seminorm on $\Hc^{\infty}$ given by
$$
p_{k}(u)
=\sum_{\tau\in\hat{K}_{M}}(1+|\tau|)^{k}\|u_{\tau}\|
\qquad\big(u\in \Hc^{\infty}\big)\,.
$$
Note that this family of seminorms determines the topology of $\Hc^{\infty}$.

Let  $r'<r''<r$.
It follows from (\ref{Prop inverse PW - assumption 3}) that for $\lambda\in\af_{\C}^{*}$, $k\in\N_{0}$ and $v\in\Hc^{\omega}(r)$
\begin{align}\label{eq estimate sum F_n}
\sum_{\tau\in\hat{K}_{M}}(1+|\tau|)^{k}\|F_{\tau}(\lambda)(v)\|
\leq C''(1+\|\lambda\|)^l\,e^{R\,\|\im \lambda\|}\sum_{\tau\in\hat{K}_{M}}e^{r''|\tau|}\|v_{\tau}\|\,,
\end{align}
where
$$
C''
:=C'\sup_{\tau\in\hat{K}_{M}}(1+|\tau|)^{j+k}e^{(r'-r'')|\tau|}<\infty.
$$
By definition $\sum_{\tau\in\hat{K}_{M}}e^{r''|\tau|}\|v_{\tau}\|<\infty$ for
each $v\in \Hc^{\omega}(r)$. It follows from  \eqref{eq estimate sum F_n}
that for every $v\in \Hc^{\omega}(r)$ the series
\begin{equation*}
F_{v}(\lambda)
:= \sum_{\tau\in\widehat{K}_{M}}  F_{\tau}(\lambda)(v_{\tau})\qquad\big(\lambda\in \af_{\C}^{*}\big)
\end{equation*}
converges in $\Hc^{\infty}$. The convergence is uniform for $\lambda$ in compacta, and hence $F_{v}$ is a holomorphic $\Hc^\infty$-valued function depending linearly on $v$.

Let $\epsilon>0$. We claim that there exists a $\theta\in C_{\epsilon}^{\infty}(K\bs G/K)$ so that $\theta* v_{K,\lambda_{0}}=v_{K,\lambda_{0}}$. To see this, first note that
$$
C_{c}^{\infty}(K\bs G/K)*v_{K,\lambda_{0}}
\subseteq\C v_{K,\lambda_{0}}.
$$
Consider now a Dirac sequence $(\theta_{n})_{n\in\N}$ of functions $\theta_{n}\in C_{1/n}^{\infty}(K\bs G/K)$. Since $\theta_{n}*v_{K,\lambda_{0}}$ converges to $v_{K,\lambda_{0}}$ for $n\to\infty$, there exists an $m\in\N$ so that $\theta_{n}*v_{K,\lambda_{0}}\neq 0$ for all $n>m$.  Let now $n>m$ be so large that $\frac{1}{n}<\epsilon$. After a rescaling of $\theta_{n}$ we obtain a function with the claimed property.

Since $\theta$ is $K$-invariant, its Fourier transform  $\hat{\theta}=\F(\theta)$ is a $W$-invariant scalar-valued
holomorphic function
on $\af_\C^*$, and by the Paley-Wiener Theorem (\ref{H-PW}) it satisfies for every $N\in\N_{0}$ the estimate
\begin{equation}\label{eq PW estimate theta}
\sup_{\lambda\in \af_\C^*}  |\hat{\theta}(\lambda)|  (1 +\|\lambda\|)^N e^{-\epsilon \|\im \lambda\|} <\infty\,
\end{equation}
Moreover, since $\theta*v_{K,\lambda_{0}}
=v_{K,\lambda_{0}}$ we have
\begin{equation}\label{eq Ftheta(lambda0)=1}
\hat{\theta}(\lambda_{0})=1\,.
\end{equation}

For $\lambda\in \af_{\C}^{*}$ and $v\in\Hc^{\omega}(r)$ we define $f_{v}(\lambda):=\hat{\theta}(\lambda) F_{v}(\lambda)\in \Hc^{\infty}$. The function $f_{v}:\af_{\C}^{*}\to\Hc^{\infty}$ thus obtained is holomorphic.
It follows from (\ref{eq Ftheta(lambda0)=1}) and
assumption (\ref{Prop inverse PW - assumption 1}) that $f_{v}(\lambda_{0})=F_{v}(\lambda_{0})=v$. In view of assumption  (\ref{Prop inverse PW - assumption 2}) the function $f_{v}$ satisfies  the intertwining relations (\ref{I-relation}). Finally, it follows from the estimates (\ref{eq estimate sum F_n}) and (\ref{eq PW estimate theta}) that there exist for every $N\in\N_{0}$ and $k\in\N_{0}$ a constant $C_{N,k}>0$, so that for every $\lambda\in\af_{\C}^{*}$ and $v\in\Hc^{\omega}(r)$
\begin{equation}\label{eq PW estimate f}
p_{k} \big(f_{v}(\lambda)\big)\leq C_{N,k} (1 +\|\lambda\|)^{-N} e^{(R+\epsilon) \|\im \lambda\|} \sum_{\tau\in\hat{K}_{M}}e^{r''|\tau|}\|v_{\tau}\|\,.
\end{equation}
Now it follows from the Paley-Wiener theorem (\ref{H-PW}) that $f_{v}=\F(\varphi'_{v})$ for some $\varphi'_{v}\in C_{R+\epsilon}^{\infty}(G/K)$. Set $\varphi_{v}=\varphi'_{v}* \theta\in C^{\infty}_{R+2\epsilon}(G/K)$. Note that $\varphi_v$ depends linearly on $v$ and satisfies
$$
\varphi_{v}*v_{K,\lambda_{0}}
=\varphi_{v}'*v_{K,\lambda_{0}}
=\F(\varphi'_{v})(\lambda_{0})
=f_{v}(\lambda_{0})
=v\,.
$$

It remains to show continuity from $\Hc^{\omega}(r)$ to $C_{R+2\epsilon}^{\infty}(G/K)$ of the map $v\mapsto\varphi_v$.
The Paley-Wiener space $\PW_W(\af_\C^*, \Hc^\infty)_{R+\epsilon}$ is a subspace of $L^{2}\big(\af^{*},\Hc, \frac{d\lambda}{|c(\lambda)|^{2}}\big)$. By the Plancherel theorem for $G/K$ and (\ref{eq PW estimate f}) we have
$$
\|\varphi_{v}'\|_{L^{2}}^{2}
=\int_{\af^{*}}\|f_{v}(\lambda)\|^{2}\frac{d\lambda}{|c(\lambda)|^{2}}
\leq \int_{\af^{*}}p_{0} \big(f_{v}(\lambda)\big)^{2}\frac{d\lambda}{|c(\lambda)|^{2}}
\leq c_{0}\left(\sum_{\tau\in\hat{K}_{M}}e^{r''|\tau|}\|v_{\tau}\|\right)^{2}\ ,
$$
with
$$
c_{0}
=C_{N,0}^{2}\int_{\af^{*}}(1 +\|\lambda\|)^{-2N} \frac{d\lambda}{|c(\lambda)|^{2}}<\infty
$$
for a sufficiently large $N\in\N$. Finally
for every continuous seminorm $q$ on $C_{R+2\epsilon}^{\infty}(G/K)$ there exists a constant $c'>0$, only depending on $\theta$, so that
$$
q(\varphi_{v})
\leq c'\|\varphi'_{v}\|_{L^{2}}.
$$
The continuity follows.
\end{proof}

\section{An explicit construction in one variable}\label{Section construction}

For every $n\in\N_{0}$ and $R>0$  we define an entire function $f_{n,R}$ on $\C$ by
\begin{equation} \label{def f_n}
f_{n,R}(z)
:= \frac{\sin (z R\pi)}{z R\pi\cdot\prod_{j=1}^{n}\left(1-\left(\frac{Rz}{ j}\right)^2\right) }
= \prod_{j=n+1}^{\infty}\left(1-\left(\frac{Rz}{ j}\right)^2\right)
\qquad(z\in\C)\, ,
\end{equation}
invoking the product expansion of the sine function.
Next we define for $n\in\N_{0}$ a polynomial  function $q_{n}$ on $\C$ by
\begin{equation} \label{def q_n}
q_n(z)
:= \prod_{j=1}^{n}\Big(1+\frac{z}{j}\Big)
\qquad(z\in\C)\, .
\end{equation}

\begin{proposition} \label{prop basic estimate}
There exist $ c, R_0>0$ and for every $r>0$
a constant $C_{r}>0$  so that the following assertion holds for
every $n\in\N_{0}$.

Let $r>0$ and $R>R_0$ with
\begin{equation}\label{eq relation between R and r}
\frac{(\log R)^2}{R^2} <c r.
\end{equation}
Let $V$ be a finite dimensional inner product space and $P: \C\to\End(V)$
a polynomial map such that
$$
\|P(z)\|_{\mathrm{op}}
\leq q_{n}(|z|)^k
\qquad(z\in\C)
$$
for some $k\in\N$.
Then
$$
\|f_{n,R}(z)^k P(z)\|_{\mathrm{op}} \leq  \left[ C_{r} \,e^{rn}e^{ R \pi\,|\im z|} \right]^k
$$
for all $z\in\C$.
\end{proposition}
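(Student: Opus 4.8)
The statement reduces immediately to the scalar estimate with $k=1$: since $\|f_{n,R}(z)^kP(z)\|_{\mathrm{op}} \le |f_{n,R}(z)|^k\,\|P(z)\|_{\mathrm{op}} \le (|f_{n,R}(z)|\,q_n(|z|))^k$, it suffices to bound $|f_{n,R}(z)|\,q_n(|z|)$ by $C_r\,e^{rn}e^{R\pi|\im z|}$ uniformly in $z\in\C$ and $n\in\N_0$, under the hypothesis $(\log R)^2/R^2 < cr$. Write $g_n(z) := f_{n,R}(z)\,q_n(z)$; from the product formula \eqref{def f_n} and \eqref{def q_n},
$$
g_n(z) = \frac{\sin(zR\pi)}{zR\pi}\cdot\prod_{j=1}^n\frac{1+\frac{z}{j}}{1-\left(\frac{Rz}{j}\right)^2} = \frac{\sin(zR\pi)}{zR\pi}\cdot\prod_{j=1}^n\frac{1}{\bigl(1-\frac{Rz}{j}\bigr)\bigl(1+\frac{Rz}{j}\bigr)}\cdot\prod_{j=1}^n\Bigl(1+\frac{z}{j}\Bigr)\,,
$$
so the task is to show $|g_n(z)| \le C_r e^{rn}e^{R\pi|\im z|}$ (then $q_n(|z|)\ge |q_n(z)|$ handles the absolute value on $q_n$, at the cost of replacing $g_n$ by $f_{n,R}q_n$ — one should in fact work directly with $h_n(z):=f_{n,R}(z)\,q_n(|z|)$, splitting into the regions where $|z|$ is comparable to the zeros $j/R$).

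The plan is to estimate in three regions. \emph{(a) Large $|z|$:} when $R|z| \ge 2n$, say, all factors $1-(Rz/j)^{-2}$... more precisely each $|1-(Rz/j)^2|$ is comparable to $(R|z|/j)^2$, so $|f_{n,R}(z)| = \prod_{j=n+1}^\infty|1-(Rz/j)^2|$ and one uses instead the $\sin$ formula: $|f_{n,R}(z)| \le \frac{e^{R\pi|\im z|}}{R|z|\pi}\prod_{j=1}^n|1-(Rz/j)^2|^{-1}$, while $q_n(|z|) = \prod_{j=1}^n(1+|z|/j) \le \prod_{j=1}^n(2|z|/j)$ for $|z|\ge n$... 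The point is that $q_n(|z|)/\prod_{j=1}^n|1-(Rz/j)^2| \lesssim \prod_{j=1}^n \frac{j}{R^2|z|}$ roughly, which for $R$ large beats $e^{rn}$ by a wide margin (here Stirling gives $\prod_{j=1}^n j = n! \le n^n$, and $n^n/R^{2n} \le e^{rn}$ precisely when $\log n - 2\log R \le r$, i.e. $n \le R^2 e^r$; for $n$ larger than this one needs the contribution of $|\sin(zR\pi)|$ and the comparison is different). This bookkeeping is the technical heart. \emph{(b) Small $|z|$, say $R|z| \le 1/2$:} here $|f_{n,R}(z)| \le C$ is bounded (the infinite product converges uniformly on compacta near $0$) and $q_n(|z|) \le \prod_{j=1}^n(1+\frac{1}{2Rj}) \le \exp\bigl(\frac{1}{2R}\sum_{j=1}^n\frac1j\bigr) \le \exp\bigl(\frac{\log n + 1}{2R}\bigr)$, which is $\le e^{rn}$ trivially for $R$ large. \emph{(c) Intermediate $|z|$, $1/2 \le R|z| \le 2n$:} this is where $z$ can be near a zero $j/R$ of a denominator factor, so $f_{n,R}$ itself stays bounded (it has no pole there — the zero of $\sin(zR\pi)$ cancels it) but the product of the first $n$ reciprocal factors in the $\sin$-representation is large; one instead uses the \emph{second} expression $f_{n,R}(z) = \prod_{j=n+1}^\infty(1-(Rz/j)^2)$, which for $R|z|\le 2n \le 2j$ has each factor of modulus $\le 1 + (R|z|/j)^2 \le \exp((R|z|/j)^2)$, giving $|f_{n,R}(z)| \le \exp\bigl((R|z|)^2\sum_{j>n}j^{-2}\bigr) \le \exp((R|z|)^2/n) \le \exp(4n)$ — too weak. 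The correct move in region (c): split off the factors with $j \le 2R|z|$ from those with $j > 2R|z|$; for $j$ in the range near $R|z|$ use that $\frac{1+z/j}{1-(Rz/j)^2} = \frac{1}{1-Rz/j}\cdot\frac{1+z/j}{1+Rz/j}$ and the last fraction has modulus $\le 2$ (for $R\ge 1$), while $\prod_{j\le 2R|z|}|1-Rz/j|^{-1}$ is controlled using $|\sin(zR\pi)| \le 1$ and the zeros of $\sin$: in fact on the real axis one has the clean bound $\bigl|\frac{\sin(x\pi)}{x\pi\prod_{j=1}^n(1-(x/j)^2)}\bigr| = \bigl|\prod_{j>n}(1-(x/j)^2)\bigr|\le 1$ for $|x|\le n+1$ and then more carefully for all $x$, and one should exploit exactly this: $|f_{n,R}(z)|$ with $z$ replaced by $|z|$ along rays.

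I expect the main obstacle to be region (c) together with the $n$-vs-$R$ trade-off in (a): getting the constant to be of the form $C_r e^{rn}$ uniformly, with the threshold $R_0$ and the slope $c$ in $(\log R)^2/R^2 < cr$ coming out correctly, requires carefully splitting the product $\prod_{j=1}^n$ at $j\sim R|z|$ and at $j\sim \log R$ (the appearance of $(\log R)^2$ strongly suggests that the worst case is $|z|\sim (\log R)/R$, where $R|z|\sim\log R$ factors are "resonant" and each contributes a bounded but not-tiny amount, so that $\prod$ over those $\sim\log R$ factors is like $e^{O(\log R)} = R^{O(1)}$, which must be absorbed into $e^{rn}$ — forcing $n \gtrsim (\log R)^2/R$ or rather the stated relation). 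Concretely I would: first establish the real-variable bound $|f_{n,R}(x)| \le 1$ for $x\in\R$ with $R|x| \le n+1$ and a matching decay bound $|f_{n,R}(x)| \le \bigl(\frac{R|x|}{n}\bigr)^{-?}$... for larger $x$; then extend to complex $z$ via $|f_{n,R}(z)| \le e^{R\pi|\im z|}\cdot f_{n,R}^{\mathrm{maj}}(|\re z|)$-type majorization using $|\sin(zR\pi)| \le e^{R\pi|\im z|}$ and $|1 - (Rz/j)^2| \ge |1 - (R\re z/j)^2|$ when... (this inequality fails in general, so one instead compares to $\prod(1+(R|z|/j)^2)^{1/2}$-type majorants); finally multiply by $q_n(|z|)$ and optimize. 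Throughout, Stirling's formula $n!\sim\sqrt{2\pi n}(n/e)^n$ and the estimate $\sum_{j=1}^n 1/j = \log n + O(1)$ are the only analytic inputs, and the relation \eqref{eq relation between R and r} is used exactly once, to ensure $R^{C} \le e^{rn}$ whenever $n \ge (\log R)^2$ (equivalently the resonant product is absorbed), with all remaining $n$ handled by the crude bounds of regions (a)/(b).
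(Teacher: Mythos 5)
Your reduction to the scalar quantity $|f_{n,R}(z)|\,q_n(|z|)$ and your large-$|z|$ and small-$|z|$ regions are sound and broadly match the paper (the paper's Lemma~\ref{largelambda} handles all of $|z|\ge n/R$ with a clean term-by-term comparison $(j+|z|)\le|j+Rz|$, $|j-Rz|\ge n-j$, so your worry about a separate case $n>R^2e^r$ in region (a) evaporates). The genuine gap is your region (c): you correctly observe that for complex $z$ with $|z|\lesssim n/R$ the crude majorization $|f_{n,R}(z)|\le\prod_{j>n}(1+(R|z|/j)^2)\le e^{(R|z|)^2/n}\le e^n$ is too weak by a factor $e^{(1-r)n}$, and none of the fixes you sketch closes this: lower-bounding $|1-(Rz/j)^2|$ by a function of $|\re z|$ or $|z|$ fails near the points $j/R$, and your own parenthetical concedes the majorization ``fails in general.'' The missing idea is that no direct pointwise majorization is needed at all. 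The paper proves the sharp bound $Ce^{rn}$ only on the \emph{real segment} $[-n/R,n/R]$ (by rewriting $\tilde F_{n,R}$ as a ratio of Gamma functions and applying Stirling), proves $Ce^{R\pi|\im z|}$ on the circle $|z|=n/R$ (Lemma~\ref{largelambda}), and then applies the maximum modulus principle to $\la F_{n,R}(z)v,w\ra\,e^{\pm ikR\pi z}$ on the two half-disks $D\cap\C_\pm$: the exponential factor has modulus $e^{-kR\pi|\im z|}$ on the relevant half-plane, so the boundary values are $\le C^k$ on the semicircle and $\le[C_re^{rn}]^k$ on the segment, and the interior bound follows. This Phragm\'en--Lindel\"of step is exactly what replaces your unresolved intermediate region, and without it I do not see how your plan can be completed.

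A secondary inaccuracy: you assert the hypothesis $(\log R)^2/R^2<cr$ is used ``exactly once, to ensure $R^C\le e^{rn}$ whenever $n\ge(\log R)^2$.'' That accounting is wrong (for $r$ near the threshold $(\log R)^2/(cR^2)$ one has $e^{rn}\ll R^C$ even for $n\sim(\log R)^2$). In the paper the hypothesis enters through the uniform pointwise estimate $H_R(x)\le 4(\log R)^2/R^2$ for $x\in(0,\tfrac1R)$ --- proved via a separating-line argument between the concave function $2x-x\log x$ and the convex function $\frac{(\log b)^2}{b}+bx^2$ with $b=R^2$ --- which after the substitution $z=(n+1)x$ yields $e^{h_{n,R}(z)}\le e^{4cr(n+1)}\le C_re^{rn}$ once $c<\tfrac14$. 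Your heuristic that the worst case sits at a ``resonant'' scale of order $(\log R)/R$ is in the right spirit, but the actual mechanism is this scaled one-variable inequality on the Stirling exponent, and your plan does not contain it.
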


The proof is divided into several lemmas. Let $$F_{n,R}(z):=f_{n,R}(z)^k P(z).$$
The first two lemmas contain estimates of the function defined by
$$\tilde F_{n,R}(z):=f_{n,R}(z) q_{n}(|z|) \,,$$
for which we have
\begin{equation}\label{eq tilde F dominates}
\|F_{n,R}(z)\|_{\mathrm{op}}
\leq |\tilde F_{n,R}(z)|^k
\qquad(z\in\C)\,.
\end{equation}

\begin{lemma} \label{largelambda}There exists a constant
$C>0$, so that for all $R>3$, $n\in\N_{0}$ and $z\in\C$ with $|z|\geq \frac{n}{R}$ we have
$$
 |\tilde F_{n,R}(z)|
 \leq Ce^ {R\pi |\im z|}\, .
 $$
\end{lemma}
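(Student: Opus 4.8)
The plan is to estimate $|\tilde F_{n,R}(z)| = |f_{n,R}(z)|\, q_n(|z|)$ in the regime $|z| \geq n/R$ by combining a bound for the sine factor with a bound for the ratio $q_n(|z|)/\prod_{j=1}^n|1-(Rz/j)^2|$. First I would write, using the first expression in \eqref{def f_n},
$$
|\tilde F_{n,R}(z)| = \frac{|\sin(zR\pi)|}{|zR\pi|}\cdot \frac{q_n(|z|)}{\prod_{j=1}^n \bigl|1-(Rz/j)^2\bigr|}\,.
$$
For the sine factor the elementary estimate $|\sin(\zeta)| \leq e^{|\im \zeta|}$ gives $|\sin(zR\pi)|/|zR\pi| \leq e^{R\pi|\im z|}/|zR\pi|$, so it remains to show that
$$
\frac{q_n(|z|)}{|zR\pi|\,\prod_{j=1}^n \bigl|1-(Rz/j)^2\bigr|}
$$
is bounded by an absolute constant whenever $|z|\geq n/R$ and $R>3$.

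The key step is the pointwise comparison $\bigl|1-(Rz/j)^2\bigr| \geq c'\bigl(1+R|z|/j\bigr)$ for each $j\in\{1,\dots,n\}$, valid precisely because $|z|\geq n/R \geq j/R$ forces $R|z|/j \geq 1$; indeed for a complex number $\zeta$ with $|\zeta|\geq 1$ one has $|1-\zeta^2| = |\zeta-1||\zeta+1| \geq (|\zeta|-1)(|\zeta|+1)$ only when $\zeta$ is real, so more carefully I would argue that $|1-\zeta^2|$ is comparable to $|\zeta|^2$ when $|\zeta|$ is bounded away from $1$ — but near $|\zeta|=1$ one needs to be slightly careful. The cleaner route: since $R|z|/j\geq 1$, write $\zeta = Rz/j$ and note $|1-\zeta^2|\geq |\im(\zeta^2)| $ is not uniformly good either, so instead I would split into $|\zeta|\geq 2$ (where $|1-\zeta^2|\geq |\zeta|^2-1 \geq \tfrac12|\zeta|^2 \geq \tfrac14(1+|\zeta|)^2 \geq \tfrac14(1+|\zeta|)$) and $1\leq|\zeta|\leq 2$ (where $1+|\zeta|\leq 3$, and one uses that for all such $j$ simultaneously $|z|$ lies in a bounded range forcing $n$ and $R|z|$ comparable, so the product over such $j$ contributes a bounded correction, absorbing it together with the $1/|zR\pi|$ factor). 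Granting the bound $\prod_{j=1}^n|1-(Rz/j)^2| \geq c'' \prod_{j=1}^n(1+R|z|/j)\cdot R|z|$ (the extra $R|z|$ coming from the $1/|zR\pi|$ in $f_{n,R}$, using $|z|\geq n/R$ to control the boundary terms), and observing $q_n(|z|) = \prod_{j=1}^n(1+|z|/j) \leq \prod_{j=1}^n(1+R|z|/j)$ since $R>1$, the quotient is bounded by $1/(c''\,R|z|\,\pi) \leq 1/(c''\pi)$ using $R|z|\geq n/R\cdot R = n \geq 0$ — here if $n=0$ the product is empty and one checks the $n=0$ case directly, where $\tilde F_{0,R}(z) = \sin(zR\pi)/(zR\pi)$ is bounded by $C e^{R\pi|\im z|}$ trivially.

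The main obstacle I anticipate is the behavior of the factors $|1-(Rz/j)^2|$ when $Rz/j$ is close to $1$ in modulus, i.e. near the zeros of the denominator of $f_{n,R}$; these are exactly cancelled by zeros of $\sin(zR\pi)$, so the function $f_{n,R}$ itself is entire and has no poles, but the crude factor-by-factor estimate loses this cancellation. The resolution is to use the second (infinite product) expression in \eqref{def f_n}, $f_{n,R}(z) = \prod_{j=n+1}^\infty(1-(Rz/j)^2)$, which for $|z|\geq n/R$ has all its factors of the form $1-(Rz/j)^2$ with $j\geq n+1 > R|z|$, hence $|Rz/j| < 1$; I would then bound $|f_{n,R}(z)|$ via the convergent product, comparing $\log|f_{n,R}(z)|$ to a sum that telescopes against $\log q_n(|z|)$. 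Concretely, $|1-(Rz/j)^2|\leq (1+R|z|/j)^2$ is too lossy, so instead I would use that for $|w|<1$, $|1-w^2|\leq e^{|w|^2/(1-|w|^2)}\cdot\text{(something)}$ — rather, the standard estimate $\prod_{j=n+1}^\infty(1+(R|z|/j)^2) \leq \exp\bigl(\sum_{j>R|z|} (R|z|/j)^2\bigr) \leq \exp(CR|z|)$ combined with the sine bound gives $|f_{n,R}(z)| \leq C' e^{R\pi|\im z|}$ directly when $|z| \asymp n/R$, and then $q_n(|z|) \leq \prod_{j=1}^n(1+n/j) \leq 2^n/n!\cdot n^n \approx e^n$ by Stirling — but we need $q_n(|z|)|f_{n,R}(z)|$ bounded with \emph{no} $e^{rn}$ allowed in this lemma, which signals that actually the delicate cancellation in the finite-product form is unavoidable and I must show $q_n(|z|)/\prod_{j=1}^n|1-(Rz/j)^2|$ is bounded by using $R|z|/j \geq 1$ to get $|1-(Rz/j)^2| \geq |(Rz/j)^2| - 1 \geq$ comparable to $(1+R|z|/j)$ after handling the annulus $1\leq |Rz/j|\leq 2$ by noting there are at most $O(R|z|) = O(n)$ such $j$ but their product is still controlled since each contributes a factor $\geq$ distance to the nearest zero, averaged out by the sine numerator. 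I would structure the final write-up to first dispose of $n=0$, then for $n\geq 1$ reduce to the ratio estimate, then prove the ratio is $O(1)$ via the two-regime split on $|Rz/j|$, using the numerator $\sin(zR\pi)$ crucially only through $|\sin(zR\pi)| \leq e^{R\pi|\im z|}$ together with — and this is the real content — the fact that $\sin(zR\pi)$ vanishes at every point where some $|1-(Rz/j)^2|$ with $j\leq n$ vanishes, so that near those points $f_{n,R}$ (being the \emph{quotient}, which is entire) stays bounded; formally this is cleanest by the maximum principle applied on the region $\{|z|\geq n/R\}$ after checking the bound on its boundary $\{|z| = n/R\}$ and controlling growth at infinity, which I expect to be the cleanest path and which I would pursue first.
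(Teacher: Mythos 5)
Your proposal correctly isolates the difficulty --- the factors $1-(Rz/j)^2$ with $j\le n$ vanish at $Rz=\pm j$, and only the zeros of $\sin(\pi Rz)$ can save the estimate there --- but none of the routes you sketch actually overcomes it, so there is a genuine gap. The bound you propose to ``grant'', namely $\prod_{j=1}^n\bigl|1-(Rz/j)^2\bigr|\ge c''\,\prod_{j=1}^n(1+R|z|/j)\cdot R|z|$, is false near the points $Rz=\pm j$ with $j\le n$, which do lie in the region $|z|\ge n/R$: the left side vanishes there and the right side does not. The attempted repair via the regime $1\le|Rz/j|\le 2$ cannot work factor by factor: the control one actually needs is a product-level, telescoping bound, $\prod_{j=1}^{n-1}\frac{j}{|j-Rz|}\le\prod_{j=1}^{n-1}\frac{j}{n-j}=\frac{(n-1)!}{(n-1)!}=1$ (using $|j-Rz|\ge R|z|-j\ge n-j$), in which individual factors can be as large as $n-1$; no pointwise comparison of $|1-(Rz/j)^2|$ with $1+R|z|/j$ reproduces this. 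Also, your remark that the infinite product $\prod_{j\ge n+1}(1-(Rz/j)^2)$ has all factors with $|Rz/j|<1$ holds only when $R|z|<n+1$, not on all of $|z|\ge n/R$.

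Your fallback, the maximum principle on $\{|z|\ge n/R\}$, fails at the outset because $\tilde F_{n,R}(z)=f_{n,R}(z)\,q_n(|z|)$ is not holomorphic in $z$ (the factor $q_n(|z|)$ depends on $|z|$), and replacing $q_n(|z|)$ by the holomorphic $q_n(z)$ only yields $|q_n(z)|\le q_n(|z|)$, an inequality in the useless direction; moreover the boundary circle $|z|=n/R$ passes through the problematic points $Rz=\pm n$, so the boundary case is not easier than the interior one. (The paper does use such a maximum-principle argument, but only later, on the disk $|z|\le n/R$ and for the genuinely holomorphic $f_{n,R}(z)^kP(z)$.) The paper's proof of this lemma is instead a direct estimate: after reducing to $\re z\ge 0$ by symmetry, it factors $\tilde F_{n,R}$ into $\prod_{j=1}^n\frac{j+|z|}{|j+Rz|}$ (each factor $\le 1$ since $|j+Rz|^2\ge R^2|z|^2+j^2\ge(j+|z|)^2$ for $R\ge 3$ and $|z|\ge n/R$), the telescoping product $\prod_{j=1}^{n-1}\frac{j}{|j-Rz|}\le 1$ above, and the single remaining dangerous factor paired with the sine,
$$
\frac{n}{R|z|}\left|\frac{\sin(\pi Rz)}{\pi(n-Rz)}\right|\le\left|\frac{\sin\bigl(\pi(n-Rz)\bigr)}{\pi(n-Rz)}\right|\le Ce^{\pi R|\im z|}\,.
$$
This pairing of the $j=n$ factor of the denominator with $\sin(\pi Rz)$, together with the telescoping bound for $j\le n-1$, is the missing idea in your write-up.
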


\begin{proof} By symmetry, we may assume without loss of generality that $\re z\geq 0$.
Note that
\begin{equation}\label{eq expression for F_(n,R)}
\tilde F_{n,R}(z)
= \frac{(1+|z|)\cdots(n+|z|)}{ (1+Rz)\cdots(n+Rz)}
\cdot \frac{1 \cdot 2\cdots n }{ (1-Rz)\cdots (n-Rz)}
\cdot \frac{\sin(\pi Rz)}{\pi Rz}\,.
\end{equation}
We claim that
$$
\left|\frac{(1+|z|)\cdots(n+|z|)}{(1+Rz)\cdots (n+Rz)}\right|
\leq 1\, .
$$
To prove the claim it suffices to show that for all $1\leq j\leq n$ we have
$$
j+|z|
\leq |j+Rz|\ .
$$
Since $\re z\geq0$, we have
$$ | j + Rz|^2 = R^2 |z|^2 + 2Rj \re z + j^2\geq R^2|z|^2 + j^2\ .$$
For $R\geq 3$ and $|z|\geq \frac{n}{R}$ the condition $ R^2|z|^{2}  \geq |z|^{2}+2n|z|$ is satisfied, and hence
$$
R^2|z|^2 + j^2
\geq |z|^2 + 2n |z| + j^2
\geq (j+|z|)^2 \ .
$$
This proves the claim.

We further claim that
$$
\left|\frac{1 \cdot 2\cdots (n-1)}{(1-Rz)\cdots (n-1-Rz)}\right|
\leq 1
$$
if $R|z|\geq n$. This is a direct consequence of the inequality $|j-Rz|\geq R|z|-j \geq n-j$.

Altogether, we obtain the estimate
\begin{align*}
|\tilde F_{n,R}(z)|
&\leq \left|\frac{n}{n-Rz}\right| \cdot \left|\frac{\sin(\pi Rz)}{\pi Rz}\right|\\
&=\frac{n}{R|z|} \left|\frac{\sin\big(\pi Rz\big)} {\pi(n-Rz)}\right|
\leq\left|\frac{\sin\big(\pi (n-Rz)\big)} {\pi(n-Rz)}\right|
\leq C e^ {R\pi |\im z|} \ .
\end{align*}
\end{proof}

\begin{lemma}\label{Lemma estimate small lambda}
There exists a constant $c>0$ such that the following holds: For all $r >0$ there exists  $C>0$, so that for all $n\in\N_{0}$ and $R>e$ with
$$
\frac{(\log R)^{2}}{R^{2}}<c r\ ,
$$
we have
$$
|\tilde F_{n,R}(z)|
\leq C e^{r n}
\qquad (0 \leq z \leq{\textstyle \frac{n}{R}})\ .
$$
\end{lemma}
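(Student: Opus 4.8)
The plan is to turn the infinite product into a closed form in $\Gamma$‑functions and then run a Stirling estimate. For real $z$ with $0\le z\le n/R$ put $w:=Rz\in[0,n]$. Combining the Euler product $\frac{\sin(\pi w)}{\pi w}=\prod_{j\ge 1}\big(1-\tfrac{w^2}{j^2}\big)$, the identities $\prod_{j=1}^n(j\mp w)=\Gamma(n+1\mp w)/\Gamma(1\mp w)$ and the reflection formula $\Gamma(s)\Gamma(1-s)=\pi/\sin(\pi s)$, one finds $f_{n,R}(z)=(n!)^2/\big(\Gamma(n+1-w)\,\Gamma(n+1+w)\big)$; together with $q_n(z)=\Gamma(n+1+z)/\big(n!\,\Gamma(1+z)\big)$ this gives
$$
\tilde F_{n,R}(z)=\frac{n!\,\Gamma(n+1+z)}{\Gamma(1+z)\,\Gamma(n+1-w)\,\Gamma(n+1+w)}\,,
$$
a positive real number. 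I would write its logarithm as $\log\tilde F_{n,R}(z)=B(z)-A(w)$, where
$$
A(w):=\log\Gamma(n+1-w)+\log\Gamma(n+1+w)-2\log\Gamma(n+1)\ \ge\ 0
$$
by the log‑convexity of $\Gamma$, and $B(z):=\log\Gamma(n+1+z)-\log\Gamma(n+1)-\log\Gamma(1+z)=\log q_n(z)\ge 0$.

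First I would dispose of the case $z\le 1$: then $q_n(z)\le q_n(1)=n+1$, so $\log\tilde F_{n,R}(z)\le\log(n+1)=o(n)$, which is $\le rn+\log C_r$ for a suitable $C_r$. For the main case $z\ge 1$ (hence $w\ge R\ge 1$ and $n\ge R$) set $x:=w/n\in(0,1]$, so $w=xn$ and $z=xn/R$. The uniform estimate $\log\Gamma(1+s)=s\log s-s+O(\log(2+s))$ ($s\ge 0$), applied to all five $\Gamma$‑factors, gives after a short computation
$$
A(w)=n\,\eta(x)+O(\log n),\qquad \eta(x):=(1+x)\log(1+x)+(1-x)\log(1-x),
$$
with $\eta(x)\ge x^2$ on $[0,1]$, and (using $z\le n$)
$$
B(z)=n\log\!\big(1+\tfrac zn\big)+z\log\!\big(1+\tfrac nz\big)+O(\log n)\,.
$$
Bounding $\log(1+\tfrac zn)\le\tfrac zn$ and, since $n/z=R/x\ge R\ge 1$, $\log(1+\tfrac nz)\le\log 2+\log R-\log x$, and using $z/n=x/R$, I arrive at
$$
\log\tilde F_{n,R}(z)\le n\,\Phi(x)+O(\log n),\qquad
\Phi(x):=\frac{x\,(1+\log 2+\log R)}{R}+\frac{x\log(1/x)}{R}-\eta(x)\,.
$$

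The core of the argument is the elementary inequality $\sup_{x\in(0,1]}\Phi(x)\le \dfrac{3(\log R)^2}{R^2}$, valid once $R$ exceeds an absolute constant $R_0$ (chosen so that $1+\log 2\le\log R$; note $\tfrac{\log R}{R}\le\tfrac1e$ always). I would split at $x=\tfrac{\log R}{R}$: for $x\le\tfrac{\log R}{R}$ use $\eta\ge 0$, the monotonicity of $s\mapsto-s\log s$ on $(0,\tfrac1e]$, and $1+\log 2+\log R\le 2\log R$; for $x\ge\tfrac{\log R}{R}$ use $\log(1/x)\le\log R$ and then $\eta(x)\ge x^2$ together with $\sup_{x}(ax-x^2)\le a^2/4$ for $a=\tfrac{3\log R}{R}$. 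Choosing the constant $c$ small enough (e.g.\ $c=\tfrac18$), the hypothesis $\tfrac{(\log R)^2}{R^2}<cr$ then forces $\sup_x\Phi(x)<\tfrac r2$, so for $z\ge 1$ one gets $\log\tilde F_{n,R}(z)\le\tfrac{rn}{2}+O(\log n)\le rn+\log C_r$, the residual $O(\log n)$ being absorbed into a constant depending only on $r$. Together with the case $z\le 1$ this proves the lemma for all $R>R_0$; the remaining finite range $e<R\le R_0$ is harmless, since there $\tfrac{(\log R)^2}{R^2}$ is bounded below, so the hypothesis forces $r\ge\log 2$, and the crude bound $\tilde F_{n,R}(z)\le\Gamma(n+1)/\big(\Gamma(1+w)\Gamma(n+1-w)\big)=O(2^n)$, immediate from the closed form, already suffices.

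I expect the decisive obstacle to be precisely this estimate of $\Phi$, i.e.\ seeing that the polynomial growth of $q_n$ (the $B$‑term) is compensated — up to the small factor $\tfrac{\log R}{R}$ — by the decay of $f_{n,R}$ (the $-A$‑term), both being governed by the entropy function $\eta$. The delicate point is the regime $x$ close to $1$ (that is, $Rz$ close to $n$), where crude quadratic bounds on $A$ break down and one really needs the concavity structure behind $\eta$ — although, as the split at $x\sim\tfrac{\log R}{R}$ shows, in the end the single inequality $\eta(x)\ge x^2$ turns out to be enough.
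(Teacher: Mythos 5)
Your proposal is correct and follows essentially the same route as the paper: the identical closed form of $\tilde F_{n,R}$ as a ratio of Gamma functions, a Stirling reduction to a one-variable entropy-type inequality, and the key bound $(1+t)\log(1+t)+(1-t)\log(1-t)\ge t^2$. The only difference is in the final scalar optimization, where the paper uses a separating-tangent-line argument (Lemma \ref{lemma ineq with b}) while you split at $x=\tfrac{\log R}{R}$ and complete the square --- an equally valid elementary finish.
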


\begin{proof}
Let $n\in\N_0$, $R>1$ and $z\geq 0$.
We shall estimate $\tilde F_{n,R}(z) $ using Stirling's approximation.  Euler's reflection identity $\Gamma(1 -x) \Gamma(x) =
\frac{\pi}{\sin(\pi x)}$ and the functional equation of the Gamma function yield
$$
\left(\prod_{j=1}^{n}(j+Rz)\right)\left(\prod_{j=1}^{n}(j-Rz)\right)\frac{\pi Rz}{\sin \pi Rz}
=\Gamma(n+1-Rz)\Gamma(n+1+Rz).
$$
This allows to rewrite (\ref{eq expression for F_(n,R)}) and express $\tilde F_{n,R}$ in terms of Gamma functions:
$$
\tilde F_{n,R}(z)
=\frac{\Gamma(n+1+z)\Gamma(n+1)}{\Gamma(1+z)\Gamma(n+1+Rz)\Gamma(n+1-Rz)}\, .
$$
We recall Stirling's approximation
\begin{equation}\label{eq Stirling}
\Gamma(x)
=\sqrt{\frac{2\pi}{x}} e^{-x} x^x\left(1+ O(1/x)\right)
\end{equation}
for $x \to \infty$.
Applying Stirling to $\tilde F_{n,R}$, we obtain that there exists a constant $c'>0$, independent of $n$ and $R$, so that for all
$z\in[0,\frac{n}{R}]$
\begin{equation}\label{eq first F estimate}
\tilde F_{n,R}(z)
\leq c'\sqrt{1+\frac{n}{R}}\,e^{h_{n,R}(z)},
\end{equation}
where
\begin{align*}
h_{n,R}(z)
&:=(n+1+z)\log(n+1+z)+(n+1)\log(n+1)\\
    &\qquad-(1+z)\log(1+z)-(n+1+Rz)\log(n+1+Rz)\\
    &\qquad-(n+1-Rz)\log(n+1-Rz)\, .
\end{align*}
Here we used the straightforward estimate for $z\in[0,\frac{n}{R}]$
$$
\frac{(1+z)(n+1+Rz)(n+1-Rz)}{(n+1+z)(n+1)}
=\frac{(1+z)\big((n+1)^{2}-R^{2}z^{2}\big)}{(n+1+z)(n+1)}
\leq 1+\frac{n}{R}
$$
to estimate the square roots in (\ref{eq Stirling}).

The term $(1+z)\log(1+z)$ in $h_{n,R}(z)$ can be estimated below by $z\log(z)$.
With the substitution $z=(n+1)x$ we then obtain
\begin{equation}\label{eq for h and H}
h_{n,R}(z)\le (n+1) H_{R}(x)
\end{equation}
where
\begin{align*}
H_R(x)
&:=(1+x)\log(1+x)-x\log(x)\\
    &\qquad-(1+Rx)\log(1+Rx)-(1-Rx)\log(1-Rx)\, .
\end{align*}
We will show
\begin{equation}\label{ineq for H_R}
H_R(x)\leq 4 \frac{(\log R)^2}{R^2}, \quad x\in (0,\tfrac 1R),
\end{equation}
for all $R\ge e$.
We estimate the first two terms of $H_R(x)$ in a separate lemma.

\begin{lemma} \label{lemma ineq with b}
For every $b\ge e^2$ and $0< x\le 1$
\begin{equation*}
(1+x)\log(1+x)-x\log x \leq \frac{(\log b)^2}{b}+b x^2.
\end{equation*}
\end{lemma}

\begin{proof}
Since $(1+x)\log(1+x)\le 2x$ for $0<x\le 1$ it suffices to show
\begin{equation*}
2x-x\log x=:\phi(x)\leq \psi(x):=\frac{(\log b)^2}{b}+b x^2\, .
\end{equation*}
The functions $\phi$ and $\psi$ are concave and convex, respectively. We will prove the inequality by exhibiting
a separating line of slope $\log b$.

We have $\phi'(x)=1-\log x$ and hence $\phi'(x)=\log b$ for $x=\frac eb$. Then
$$\phi(x)\le \phi(\tfrac eb)+\log(b)(x-\tfrac eb)=\tfrac eb+\log(b)x\, .$$
On the other hand $\psi'(x)=2bx=\log b$ for $x=\frac{\log b}{2b}$ and therefore
$$\psi(x)\ge \psi\Big(\frac{\log b}{2b}\Big)+\log (b)\Big(x-\frac{\log b}{2b}\Big)=\frac{3(\log b)^2}{4b}+\log(b)x\, .$$
Hence $\psi\ge \phi$ if $\frac34 (\log b)^2\ge e$ and in particular if $b\ge e^2$.
\end{proof}

We proceed with the proof of \eqref{ineq for H_R}. Let $$\varphi(t)=(1+t)\log(1+t)+(1-t)\log(1-t)$$
for $0\le t<1$. Then $\varphi(0)=\varphi'(0)=0$ and
$\varphi''(t)=\frac1{1+t}+\frac1{1-t}\ge 2$. Hence $$ \varphi(t) \ge t^2.$$
Then for $x\in (0,\frac1R)$
$$H_R(x)\le (1+x)\log(1+x)-x\log x  -R^2x^2.$$
We obtain \eqref{ineq for H_R} from Lemma \ref{lemma ineq with b} with $b=R^2$.

We can now finish the proof of Lemma \ref{Lemma estimate small lambda}. Let $0<c<\frac14$. If $r>0$, $R\ge e$ and $\frac{\log(R)^{2}}{R^{2}}\le cr$, then
$$
|\tilde F_{n,R}(z)|
\leq c' e^{r}\sqrt{1+n}\,e^{4crn} \leq C e^{rn}, \quad n\in\N_0, z\in [0,\tfrac nR],
$$
by (\ref{eq first F estimate}) and \eqref{eq for h and H}, with a constant $C>0$ depending only on $c$ and $r$.

\end{proof}

\begin{proof}[Proof of Proposition \ref{prop basic estimate}]
By Lemma \ref{largelambda} and
(\ref{eq tilde F dominates}) we have for all $R>3$, $n\in\N_{0}$ and $z\in\C$ with $|z|\geq \frac{n}{R}$
\begin{equation}\label{eq F estimate}
\|F_{n,R}(z)\|_{\mathrm{op}}\leq \left[ Ce^ {R\pi |\im z|}\right]^k\, .
\end{equation}
It therefore suffices to estimate $F_{n,R}(z)$  for $z$ in the disk $D = \left\{z \in \C : |z| \leq \frac{n}{R}\right\}$.

Note that
$$
\|F_{n,R}(z)\|_{\mathrm{op}}
=\sup_{\substack{v,w\in V\\\|v\|=\|w\|=1}}|\langle F_{n,R}(z)v,w\rangle|,
$$
and that the matrix coefficients $\langle F_{n,R}(z)v,w\rangle$ depend holomorphically on $z\in\C$.

Let $D_\pm=D\cap\C_\pm$ where $\C_\pm$ denotes the closed upper/lower half plane.
By the maximum modulus principle a holomorphic function in $D_\pm$ assumes its maximum modulus on
$\partial D_\pm$, i.e.~on the union of the semicircle $\partial D\cap \C_\pm$ and the segment $D \cap \R = [-\frac{n}{R}, \frac{n}{R}]$. We  apply the principle to the holomorphic function
$$\la F_{n,R}(z)v,w\ra\,e^{\pm iRk\pi z}$$ on
$D_\pm$, which by (\ref{eq F estimate})
is bounded in absolute value by $C^k$ on $\partial D\cap \C_\pm$.

On the other hand,  with $c$ as in Lemma \ref{Lemma estimate small lambda}
it follows that for all $r$ satisfying \eqref{eq relation between R and r}
there exists a constant $C_r$ such that
$|\la F_{n,R}(z)v,w\ra\,e^{\pm iRk\pi z}|$ is bounded by $[C_re^{rn}]^k$ for
$z \in [-\frac{n}{R}, \frac{n}{R}]$. Assuming as we may that $C_r\ge C$, we obtain
$$|\la F_{n,R}(z)v,w\ra\,e^{\pm iRk\pi z}|\leq [C_r e^{rn}]^k$$
for all $z\in D_\pm$. This implies the proposition.
\end{proof}

The following lemma will be used in the next two sections.

\begin{lemma} \label{lemma normalization f_n}
Let $R>0$ and $z_0\in \C$. Assume $Rz_0\notin \Z\bs\{0\}$.
Then
\begin{equation*}
\inf_{n\in\N_0}|f_{n,R}(z_0)|>0.
\end{equation*}
\end{lemma}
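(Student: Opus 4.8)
The plan is to exploit the product formula in \eqref{def f_n}, which expresses $f_{n,R}(z_0)$ directly as the tail of a convergent infinite product, namely $f_{n,R}(z_0) = \prod_{j=n+1}^{\infty}\big(1-(Rz_0/j)^2\big)$. Set $w := Rz_0$. Since $\sum_{j\ge 1}|w/j|^2<\infty$, the full product $\prod_{j=1}^{\infty}\big(1-(w/j)^2\big)$ converges absolutely, and by the Euler product expansion of the sine it equals $\frac{\sin(\pi w)}{\pi w}$ (read as $1$ when $w=0$). Denote this value by $L$.

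Next I would record the two points where the hypothesis $w=Rz_0\notin\Z\bs\{0\}$ is used. First, it guarantees that $w\neq\pm j$ for every $j\in\N$, so no factor $1-(w/j)^2$ vanishes; hence each partial product $P_n := \prod_{j=1}^{n}\big(1-(w/j)^2\big)$ is nonzero, and $P_n\to L$. Second, it guarantees $L=\frac{\sin(\pi w)}{\pi w}\neq 0$. Passing to the limit $N\to\infty$ in the identity $\prod_{j=n+1}^{N}\big(1-(w/j)^2\big)=P_N/P_n$ then yields $f_{n,R}(z_0)=L/P_n$, which is in particular nonzero for every $n$.

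Finally, since $P_n\to L$ and $L\neq 0$, the sequence $f_{n,R}(z_0)=L/P_n$ consists of nonzero numbers converging to $1$. Hence there is an $N_0\in\N$ with $|f_{n,R}(z_0)|\ge \tfrac12$ for all $n\ge N_0$, so that $\inf_{n\in\N_0}|f_{n,R}(z_0)|\ge\min\big\{\tfrac12,|f_{0,R}(z_0)|,\dots,|f_{N_0-1,R}(z_0)|\big\}>0$, which is the assertion. There is no genuine obstacle here; the only subtlety is verifying that the relevant factors and the limiting value are all nonzero, which is precisely what the assumption $Rz_0\notin\Z\bs\{0\}$ supplies.
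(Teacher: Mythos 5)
Your proof is correct. It takes a mildly different route from the paper's: you write $f_{n,R}(z_0)=L/P_n$ with $P_n=\prod_{j=1}^{n}\bigl(1-(w/j)^2\bigr)$ and $L=\frac{\sin(\pi w)}{\pi w}$, and conclude from $P_n\to L\neq 0$ that the sequence $f_{n,R}(z_0)$ tends to $1$, so it is eventually bounded below by $\tfrac12$; the hypothesis $w\notin\Z\setminus\{0\}$ enters twice, once to make each $P_n$ (hence each $f_{n,R}(z_0)$) nonzero and once to make the limit $L$ nonzero. The paper instead never invokes the value of the full sine product at $z_0$: it bounds the tail directly via $|1-(w/j)^2|\geq 1-(|w|/j)^2$ for $j>R|z_0|$ and the monotonicity of the tail products, obtaining the explicit uniform lower bound $f_{N,R}(|z_0|)>0$ for all $n\geq N=\lceil R|z_0|\rceil$, with the hypothesis used only to handle the finitely many indices $n<N$. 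The two arguments are of comparable length; yours is perhaps more transparent conceptually (the sequence converges to $1$), while the paper's yields a concrete lower bound for the tail without appealing to convergence of the partial products. Both are complete and correct.
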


\begin{proof}
With
\eqref{def f_n} we observe that
$f_{n,R}(z)=0$  if and only if $Rz\in\Z$ and $|z|>\frac nR$.
In particular the assumption on $z_0$ implies $f_{n,R}(z_0)\neq 0$ for all $n\in\N_0$.

If $n\geq N:=\lceil R|z_0|\rceil$ then
$$
|f_{n,R}(z_0)|
\geq \prod_{j=n+1}^\infty \left( 1 - \Big(\frac{R|z_0|}{j}\Big)^2\right)
\geq \prod_{j=N+1}^\infty \left( 1 - \Big(\frac{R|z_0|}{j}\Big)^2\right)=f_{N,R}(|z_0|).
$$
Hence
 $$\inf_{n\in\N_0}|f_{n,R}(z_0)|\ge \min\big\{ |f_{0,R}(z_0)|,\dots,|f_{N-1,R}(z_0)|
 ,f_{N,R}(|z_0|)\big\}>0$$
by the first observation in the proof.
\end{proof}

We end this section with a remark that will be useful when Proposition \ref{prop basic estimate} is applied in Sections \ref{Section rank one cases} and \ref{Section higher rank case}.

\begin{rmk} \label{rmk scaling invariant}
Let $P(R,r)$ be any proposition depending on two variables $R,r>0$.
Then the proposition
$$\exists c,R_0>0\,\, \forall r>0,R>R_0 : \quad \Big( \,\frac{(\log R)^2}{R^2} <c r \,\,\Rightarrow\,\,P(R,r)\Big)
$$
possesses a scaling invariance. Let  $a,A>0$ and $B\in\R$. Then
$$\exists d,S_0>0\,\, \forall s>0,S>S_0 : \quad \Big(\,\frac{(\log S)^2}{S^2} <ds \,\Rightarrow\,\, P(AS+B,as)\Big)
 $$
is an equivalent proposition. This follows from the observation that there exist constants $C_{0},C_{1},C_{2}>0$ so that for all $R>C_{0}$
$$
C_{1}\frac{\log R}{R} \leq \frac{\log( AR+B)}{AR+B}\leq C_{2}\frac{\log R}{R}\,.
$$ 
\end{rmk}

\section{The rank one cases}\label{Section rank one cases}

Using the construction from the previous section we can now complete the argument in case
$G$ is of real rank one. Let $\alpha\in \Sigma^+$ be the indivisible root.
Then
$$\gf =  \gf^{2\alpha}+\gf^{\alpha} +\af +\mf +  \gf^{-\alpha} + \gf^{-2\alpha}$$
with
$\af  = \R \alpha^\vee$.  We set $m_\alpha:=\dim \gf^{\alpha}$ and $m_{2\alpha}=\dim \gf^{2\alpha}$. Then
$$\rho= \frac{1}{2}( m_\alpha + 2m_{2\alpha})\alpha\, . $$
The goal of this section is to prove the following

\begin{theorem} \label{theorem rank one} Let $G$ be a group of real rank one and $V_{\lambda_0}$ a representation of the
$K$-spherical principal series with $\lambda_0$ satisfying \eqref{Kos}. Then there exist positive constants $c, R_0>0$ independent of $\lambda_0$, such that for all $R,r>0$ with
$R>R_0$ and $\frac{(\log R)^2}{R^2} < c r$, we have a continuous embedding
$$
 V_{\lambda_0}^\omega(r) \subset
C_R^{\infty}(G)* v_{K,\lambda_0} 
=\big(V_{\lambda_{0}}\big)^{\min}_{R}\, .
$$
\end{theorem}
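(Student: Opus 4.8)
The plan is to carry out Ansatz~1 of Section~\ref{subsection Ansatz 1} and feed its output into Proposition~\ref{Prop inverse PW}, the decisive analytic input being Proposition~\ref{prop basic estimate}. First I would pass to the compact model, so that $V_{\lambda_0}^\infty=C^\infty(K/M)=\Hc^\infty$ and $V_{\lambda_0}^\omega(r)=\Hc^\omega(r)$ (Lemmas~\ref{lemma K-omega} and~\ref{lemma K-analytic}), with the $K$-types running over $\hat K_M$. Because $G$ has real rank one, $V_{\tau^\vee}^M$ is one-dimensional, so $\End_K(\Hc_\tau)\cong\C$ and $\tilde Q_\tau(\lambda)=Q_\tau(\lambda)Q_\tau(0)^{-1}$ is a scalar polynomial with $\tilde Q_\tau(0)=1$; identifying $\af_\C^*$ with $\C$ via $\lambda\mapsto\lambda\rho$, set $n(\tau):=\deg\tilde Q_\tau$. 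The explicit rank-one formula \cite[Ch.~III, Cor.~11.3]{H3} (for $G=\Sl(2,\R)$ it is the one recorded above) exhibits $\tilde Q_\tau$ as a product of $n(\tau)$ linear factors $1+\tfrac{i\lambda}{b_j}$ with positive reals $b_j\ge c_1 j$ for some $c_1>0$, and with $n(\tau)\le C_0|\tau|$. From this I would extract: (a) the growth bound $|\tilde Q_\tau(\lambda)|=\prod_j\bigl|1+\tfrac{i\lambda}{b_j}\bigr|\le q_{n(\tau)}(|\lambda|/c_1)\le q_{n(\tau)}(|\lambda|)^{k}$ for all $\lambda\in\C$, with $k\in\N$ depending only on $G$ (Bernoulli's inequality absorbs the constant $1/c_1$ into a power); and (b) since~\eqref{Kos} translates, under the identification, into $\re(i\lambda_0)\ge0$, each factor has modulus $\ge1$ at $\lambda_0$, so $|\tilde Q_\tau(\lambda_0)|\ge1$ for all $\tau$. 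Fact (b) is the quantitative form of Kostant's invertibility of $Q_\tau(\lambda_0)$, and it is the single place where~\eqref{Kos} is used and where the sign has to be watched.

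Next I would build the interpolating family. Since the set $\{s>0: s\lambda_0\in\Z\setminus\{0\}\}$ is discrete (and empty unless $\lambda_0\in\R\setminus\{0\}$), fix $R^*>0$, comparable to $R$ and to be pinned down at the end, with $R^*\lambda_0\notin\Z\setminus\{0\}$; by Lemma~\ref{lemma normalization f_n}, $\delta:=\inf_{n\in\N_0}|f_{n,R^*}(\lambda_0)|>0$. Define
$$
\phi_\tau(\lambda):=\frac{f_{n(\tau),R^*}(\lambda)^{k}}{f_{n(\tau),R^*}(\lambda_0)^{k}},\qquad
F_\tau(\lambda):=\phi_\tau(\lambda)\,\tilde Q_\tau(\lambda)\,\tilde Q_\tau(\lambda_0)^{-1}\in\End(\Hc_\tau).
$$
Evenness of $f_{n,R^*}$ makes $\phi_\tau$ $W$-invariant with $\phi_\tau(\lambda_0)=1$, so $F_\tau(\lambda_0)=\id$, which is hypothesis~(\ref{Prop inverse PW - assumption 1}) of Proposition~\ref{Prop inverse PW}; hypothesis~(\ref{Prop inverse PW - assumption 2}) follows from this $W$-invariance and Kostant's factorization~\eqref{factor tau}, which gives $J_{w,\lambda}[\tau]\circ\tilde Q_\tau(\lambda)=\tilde Q_\tau(w\lambda)$ and hence $F_\tau(w\lambda)=J_{w,\lambda}\circ F_\tau(\lambda)$. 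For hypothesis~(\ref{Prop inverse PW - assumption 3}) I would invoke Proposition~\ref{prop basic estimate} with $P=\tilde Q_\tau$ and $n=n(\tau)$: choosing $r^*>0$ with $kC_0 r^*<r$ and $R^*$ large enough relative to $r^*$ that the hypothesis $(\log R^*)^2/(R^*)^2<c\, r^*$ of that proposition holds, one gets $|f_{n(\tau),R^*}(\lambda)^{k}\tilde Q_\tau(\lambda)|\le[C_{r^*}e^{r^*n(\tau)}e^{R^*\pi|\im\lambda|}]^{k}$; dividing by $|f_{n(\tau),R^*}(\lambda_0)|^{k}|\tilde Q_\tau(\lambda_0)|\ge\delta^{k}$ and using $n(\tau)\le C_0|\tau|$ yields
$$
\|F_\tau(\lambda)\|_{\mathrm{op}}\le (C_{r^*}/\delta)^{k}\,e^{r'|\tau|}\,e^{\tilde R\|\im\lambda\|}\qquad(\tau\in\hat K_M,\ \lambda\in\af_\C^*),
$$
with $r':=kC_0 r^*<r$ and $\tilde R:=kR^*\pi/\|\rho\|$ — exactly~(\ref{Prop inverse PW - assumption 3}) with $j=l=0$.

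Proposition~\ref{Prop inverse PW} then produces, for every $\epsilon>0$, a continuous linear map $\varphi:\Hc^\omega(r)\to C^\infty_{\tilde R+\epsilon}(G/K)$ with $\varphi(v)*v_{K,\lambda_0}=v$. Composing with $C^\infty_{\tilde R+\epsilon}(G/K)\hookrightarrow C^\infty_{\tilde R+\epsilon}(G)$ and with the quotient map $f\mapsto f*v_{K,\lambda_0}$ gives a continuous embedding $V_{\lambda_0}^\omega(r)\hookrightarrow C^\infty_{\tilde R+\epsilon}(G)*v_{K,\lambda_0}$, and the right-hand side equals $(V_{\lambda_0})^{\min}_{\tilde R+\epsilon}$ because $v_{K,\lambda_0}$ is $\U(\gf)$-cyclic under~\eqref{Kos} (Lemma~\ref{lemma quotient}). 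It remains to choose the constants. Given $R$, take $\epsilon$ small and $R^*$ slightly below $\|\rho\|(R-\epsilon)/(k\pi)$ (still large, still avoiding the exceptional set above), so that $\tilde R+\epsilon\le R$ and hence $C^\infty_{\tilde R+\epsilon}(G)*v_{K,\lambda_0}\subset C^\infty_{R}(G)*v_{K,\lambda_0}$ continuously. Now $R^*$ is a fixed positive multiple of $R$ up to lower-order terms, so the inequality of Remark~\ref{rmk scaling invariant} gives $(\log R^*)^2/(R^*)^2\le C_2^2(\log R)^2/R^2$ for $R$ large; thus the requirements ``$R^*$ large relative to $r^*$'' and $r^*<r/(kC_0)$ (take e.g.~$r^*=3r/(4kC_0)$) are met as soon as $(\log R)^2/R^2<c\,r$ and $R>R_0$ for suitable $c,R_0>0$ depending only on $G$ (through $k$, $C_0$, $\|\rho\|$, $C_2$ and the constants of Propositions~\ref{prop basic estimate} and~\ref{Prop inverse PW}), and in particular not on $\lambda_0$.

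With the analytic core, Proposition~\ref{prop basic estimate}, already in hand, the main obstacle is structural, of two kinds. First one must read off from the rank-one formula for $\tilde Q_\tau$ both the growth bound $|\tilde Q_\tau(\lambda)|\le q_{n(\tau)}(|\lambda|)^{k}$ with $n(\tau)\le C_0|\tau|$ — so that Proposition~\ref{prop basic estimate} applies with a $K$-type-independent power — and the uniform lower bound $|\tilde Q_\tau(\lambda_0)|\ge1$, which controls the normalizing factor $\tilde Q_\tau(\lambda_0)^{-1}$ and is the precise point where~\eqref{Kos} is indispensable. Second one must bookkeep the several rescalings — the dictionary $\lambda\leftrightarrow z$, the factor $\pi$, the exponent $k$, the comparison $n(\tau)\le C_0|\tau|$, and the auxiliary $\epsilon$ — so that the final constraint between $R$ and $r$ acquires exactly the stated shape with constants independent of $\lambda_0$; Remark~\ref{rmk scaling invariant} is the tool for this last step.
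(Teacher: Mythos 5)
Your proposal is correct and follows essentially the same route as the paper: Ansatz~1, the bound $|\tilde Q_\tau(\lambda)|\le q_{n(\tau)}(|\lambda|)^k$ with $n(\tau)\lesssim|\tau|$ from the explicit rank-one formula, the lower bounds $|\tilde Q_\tau(\lambda_0)|\ge1$ (via \eqref{Kos}) and Lemma~\ref{lemma normalization f_n}, then Proposition~\ref{prop basic estimate}, Proposition~\ref{Prop inverse PW} and Remark~\ref{rmk scaling invariant}. The only (harmless) difference is that you treat the subcases $m_{2\alpha}=0$ and $m_{2\alpha}>0$ uniformly by factoring $\tilde Q_\tau$ into linear factors with denominators $\ge c_1 j$ and absorbing $c_1$ into the exponent $k$, whereas the paper handles them separately with $k=1$ and $k=2$.
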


\begin{rmk} \label{rmk intro proof rank one}
With the theorem above we can prove Theorem \ref{intro theorem} for
groups of real rank one. Let $c$ and $R_0$ be as above. Then
Theorem \ref{theorem rank one} and Lemma \ref{lemma K-analytic} imply
that the conclusion of Theorem \ref{intro theorem} is valid for $V=V_{\lambda_0}$. For a general irreducible
Harish-Chandra module $V$ the conclusion
then follows from
Lemmas \ref{lemma reduction 1} and \ref{lemma quotient}.
\end{rmk}

In order to prepare for the proof of Theorem \ref{theorem rank one}, we introduce some new notation and collect a few facts
about real rank one groups which will be used in the sequel.
A special feature of this case is that $K/M$ is a compact symmetric space of rank one. In particular we have $ \dim V_\tau^M =1$ for all $\tau \in \hat K_M$.
Consequently, the $\tilde Q$-matrices from Remark \ref{Rem Q polynomials} are just scalar-valued polynomials.
These polynomials can be read off from \cite[Ch.~III, Th.~11.2]{H3}. We recall from op.~cit.~the non-negative integers $0\leq r \leq s$ attached to
a fixed $\tau \in \hat K_M$. From the proof of the cited theorem it follows that $r$ and $s$ have the same parity if $m_{2\alpha}\neq0$. It further follows from the equation for $s$ and $r$ on page 346 in op.~cit. that there exists an $m>0$, independent of $\tau$, such that
\begin{equation*}
s
\leq m |\tau|\, .
\end{equation*}
We may and will take $m\in\N$.
In order to express the $\tilde{Q}_\tau(\lambda)$ in an efficient way, we introduce some new notation.

For elements $0< a\leq b$ with $b-a\in\N_0$,
we define polynomials in the complex plane by
\begin{equation} \label{def Gamma ab}
\Gamma_{a,b}(z)
:=\frac{\Gamma (z+b) \Gamma(a)}{\Gamma(z+a)\Gamma(b)} = \frac{ (z+a)(z+a+1)\cdots(z+b -1)}{a(a+1)\cdots (b -1)}\, .
\end{equation}

For later reference we note the following estimates by the polynomials
introduced in
\eqref{def q_n}:
\begin{equation}\label{eq estimate Gamma_ab}
|\Gamma_{a,b}(z)|\leq \Gamma_{a,b}( |z|) \leq \,
\begin{cases}
 q_{b-a}(|z|)\qquad &\text{if } a\ge1\\[5pt]
  \frac ba\,q_{b-a}(|z|)\qquad &\text{if } a<1\,.
\end{cases}
\end{equation}
The inequality for $a\ge 1$ follows from
$$\frac{|z|+a+j}{a+j}=1+\frac {|z|}{a+j}\leq 1+\frac {|z|}{1+j}$$
for each $j\ge 0$, and the other one is
then a consequence of
$$\Gamma_{a,b}(|z|)=\frac{(|z|+a)\,b}{(|z|+b)\,a}\,\,\Gamma_{a+1,b+1}(|z|).$$

We also note that
\begin{equation}\label{eq Gamma geq 1}
|\Gamma_{a,b}(z)|\geq 1\qquad(\re(z)\geq 0).
\end{equation}

Next we define positive half integers. In case $m_{2\alpha}=0$ we set
$$
a_\tau:=\rho(\frac{\alpha^\vee}{2})=\frac{m_\alpha}{2}
\quad\text{and} \quad
b_\tau:= \rho(\frac{\alpha^\vee}{2})+s,
$$
and note that $b_\tau-a_\tau=s\in \N_0$.

For $m_{2\alpha}>0$ we first note that $\rho(\frac{\alpha^\vee}{2}) = \frac{m_\alpha}{2} +  m_{2\alpha}=:d\in\N$ is a positive
integer greater or equal to $2$, as $m_\alpha$ is even when $m_{2\alpha}>0$. Further we define  positive half integers by
$$
a_\tau^1:=\rho(\frac{\alpha^\vee}{4})=\frac{d}{2}
\quad \text{and} \quad
b_\tau^1:= \frac{1}{2}( s+ r +d )
$$
and
$$
a_\tau^2:=\frac{1}{2}(d+ 1-m_{2\alpha})
\quad \text{and} \quad
b_\tau^2:= \frac{1}{2}( s -r +d + 1 - m_{2\alpha})\, .
$$
Then both $b_\tau^1 - a_\tau^1 = \frac{1}{2}(s+r)$ and $b_\tau^2 - a_\tau^2 = \frac{1}{2}(s-r)$ are non-negative integers.

Having defined these constants, we rephrase  \cite[Ch. III,  Cor. 11.3]{H3} as follows:

\begin{lemma}\label{Lemma q polynomials}
 Let $G$ be a group of real rank one and $\tau \in \hat K_M$. Then the
following assertions hold:
\begin{enumerate} \label{lemma q in rank one}
\item If $m_{2\alpha}=0$, then $a_\tau\ge\frac 12$  and
\begin{equation*} 
\tilde{Q}_\tau(\lambda)= \Gamma_{a_\tau, b_\tau}(i\lambda(\frac{\alpha^\vee}{2}))\,.
\end{equation*}
\item If $m_{2\alpha}>0$, then
$a_\tau^1\ge a_\tau^2\ge 1$ and
\begin{equation*} 
\tilde{Q}_\tau(\lambda)= \Gamma_{a_\tau^1, b_\tau^1} (i\lambda(\frac{\alpha^\vee}{4})) \,\Gamma_{a_\tau^2, b_\tau^2} (i\lambda(\frac{\alpha^\vee}{4}))\, .
\end{equation*}
\end{enumerate}
\end{lemma}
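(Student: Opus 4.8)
The assertion is a reformulation of \cite[Ch.~III, Cor.~11.3]{H3} in the $\Gamma_{a,b}$-notation of \eqref{def Gamma ab}, so the plan is to repackage Helgason's explicit formula and then verify the elementary inequalities on the constants. First I would recall the shape of Helgason's formula. By Remark \ref{Rem Q polynomials}, in the rank one case $V_{\tau^\vee}^M$ is one-dimensional, so $\tilde Q_\tau$ is a scalar polynomial normalized by $\tilde Q_\tau(0)=1$. According to \cite[Ch.~III, Th.~11.2 and Cor.~11.3]{H3}, $Q_\tau(\lambda)$ is an explicit product of linear factors in $i\lambda$ whose shifts are governed by the integers $0\le r\le s$ attached to $\tau$ and by $\rho=\frac12(m_\alpha+2m_{2\alpha})\alpha$; these factors organize into one block coming from the root string through $\alpha$ and, when $m_{2\alpha}>0$, a second block coming from the string through $2\alpha$. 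Dividing by $Q_\tau(0)$ and collecting each block as a ratio $\Gamma(z+b)/\Gamma(z+a)=(z+a)\cdots(z+b-1)$ turns the expression into one, resp.\ a product of two, $\Gamma_{a,b}$-polynomials, consistently with $\tilde Q_\tau(0)=1=\Gamma_{a,b}(0)$; the argument is $i\lambda(\alpha^\vee/2)$ in the first case and $i\lambda(\alpha^\vee/4)$ in the second, the quarter appearing because the $2\alpha$-string contributes shifts at half the spacing of the $\alpha$-string.

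For the case $m_{2\alpha}=0$: since $G$ has rank one and $\gf^\alpha\neq0$ we have $m_\alpha\ge1$, hence $a_\tau=\rho(\alpha^\vee/2)=m_\alpha/2\ge\tfrac12$. Helgason's single block consists of exactly $s$ linear factors starting at $a_\tau$, which is precisely $\Gamma_{a_\tau,\,a_\tau+s}(i\lambda(\alpha^\vee/2))$; putting $b_\tau=a_\tau+s$ gives $b_\tau-a_\tau=s\in\N_0$ and the stated identity. As a consistency check one recovers the explicit $\Sl(2,\R)$-formula recorded earlier (there $m_\alpha=1$, $m_{2\alpha}=0$, $r=0$, $s=|\tau|$, and $a_\tau=\tfrac12$).

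For $m_{2\alpha}>0$: here $m_\alpha$ is even and positive, so $m_\alpha\ge2$ and $d=\rho(\alpha^\vee/2)=\frac{m_\alpha}{2}+m_{2\alpha}$ is an integer $\ge2$. Helgason's $\alpha$-block has $\tfrac12(s+r)$ factors starting at $a_\tau^1=\rho(\alpha^\vee/4)=d/2$, and the $2\alpha$-block has $\tfrac12(s-r)$ factors starting at $a_\tau^2=\tfrac12(d+1-m_{2\alpha})$. Because $r$ and $s$ have the same parity when $m_{2\alpha}\neq0$, both $b_\tau^i-a_\tau^i=\tfrac12(s\pm r)$ are non-negative integers, so both blocks are genuine $\Gamma_{a,b}$-polynomials and their product is the claimed expression. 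Finally $a_\tau^1=d/2\ge1$; moreover $d+1-m_{2\alpha}=\tfrac{m_\alpha}{2}+1$, so $a_\tau^2=\tfrac12(\tfrac{m_\alpha}{2}+1)\ge1$ because $m_\alpha\ge2$; and $a_\tau^1-a_\tau^2=\tfrac12(m_{2\alpha}-1)\ge0$ because $m_{2\alpha}\ge1$. Hence $a_\tau^1\ge a_\tau^2\ge1$.

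The only step requiring genuine care is the first one: extracting Helgason's formula in a form that matches the normalization $\tilde Q_\tau(0)=1$ and pinning down that the two blocks are functions of $i\lambda(\alpha^\vee/4)$ rather than $i\lambda(\alpha^\vee/2)$. Everything beyond that is arithmetic with the multiplicities $m_\alpha$ and $m_{2\alpha}$ and with the integers $r\le s$.
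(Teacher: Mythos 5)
Your proposal is correct and follows the same route as the paper: the identity is taken directly from Helgason \cite[Ch.~III, Cor.~11.3]{H3} (the paper likewise offers no independent derivation, only the "rephrasing" into the $\Gamma_{a,b}$-notation), and the inequalities $a_\tau\ge\frac12$ and $a_\tau^1\ge a_\tau^2\ge 1$ follow from the same arithmetic with $m_\alpha$, $m_{2\alpha}$ and the parity of $r$ and $s$ that the paper records when defining these constants. Your computations ($d\ge 2$ from $m_\alpha$ even and positive, $a_\tau^2=\frac12(\frac{m_\alpha}{2}+1)\ge 1$, $a_\tau^1-a_\tau^2=\frac12(m_{2\alpha}-1)\ge 0$) all check out.
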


\subsection{Proof of Theorem \ref{theorem rank one} in case of $m_{2\alpha}=0$.}
We identify $\af_\C^*$ with $\C$ via
\begin{equation*}
\C \mapsto \af_\C^*, \ \ z\mapsto z \alpha\, ,
\end{equation*}
i.e.~$\lambda=z\alpha\in \af_\C^*$ identifies with $z\in \C$.
In these coordinates we then have
\begin{equation*} 
\tilde{Q}_\tau(z)
=  \Gamma_{\frac{m_\alpha}{2},\frac{m_\alpha}{2}+s}(iz),
\end{equation*}
and it follows from \eqref{eq estimate Gamma_ab} that
$|\tilde{Q}_{\tau}(z)|
 \leq (1+2s)q_s(|z|).$

We recall that $s\leq m|\tau|$ for every $\tau\in\hat{K}_{M}$, and that $m\in\N$. We write $\lceil\tau\rceil\in\N$ for the smallest integer greater or equal than $|\tau|$. We thus obtain the bound
\begin{equation}\label{eq bound q's}
 |\tilde{Q}_{\tau}(z)|
 \leq  (1+2m\lceil\tau\rceil) q_{m\lceil\tau\rceil} (|z|).
\end{equation}

We recall the functions $f_{n,R}$, depending on $R>0$,  as defined in \eqref{def f_n}.
Let $z_0\in\C$ be so that $\lambda_0=z_{0}\alpha$.
We assume that $Rz_{0}$ is not a non-zero integer. Then $f_{n,R}(z_{0})\neq 0$ for all $n\in \N_{0}$. For $\tau\in\hat{K}_{M}$ we define the $W$-symmetric entire function
$$
\phi_\tau:\C\to\C;\quad z\mapsto
\frac{f_{ m\lceil\tau\rceil ,R}(z)}{f_{m\lceil\tau\rceil ,R}(z_0)}.
$$
Now given $\lambda_{0}=z_0\alpha\in\af_{\C}^{*}$ satisfying \eqref{Kos},  we follow Ansatz 1 in Section \ref{subsection Ansatz 1} and define
$$
\tilde{F}_\tau(z)
= \phi_\tau(z) \tilde{Q}_\tau(z) \tilde{Q}_\tau(z_0)^{-1} \in \End_K (V_\tau\otimes V_{\tau^\vee}^M)\simeq \C
\quad(z\in\C)\,.
$$
Let $F_{\tau}:\af_{\C}^{*}\to  \End_K (V_\tau\otimes V_{\tau^\vee}^M)$ be given by
$$
F_{\tau}(z\alpha)
=\tilde F_{\tau}(z)
\qquad(z\in \C).
$$
It is immediate that $F_{\tau}$ satisfies the conditions (\ref{Prop inverse PW - assumption 1}) and (\ref{Prop inverse PW - assumption 2}) in Proposition \ref{Prop inverse PW} with $\Hc_{\tau}=V_\tau\otimes V_{\tau^\vee}^M$.

We continue by investigating condition (\ref{Prop inverse PW - assumption 3}).
For that we need to control the normalizing factors $\tilde{Q}_{\tau}(z_{0})$ and $f_{m\lceil\tau\rceil,R}(z_{0})$.
By (\ref{Kos}) the real part of $iz_{0}$ is non-negative, and hence it follows from
(\ref{eq Gamma geq 1}) that
$$
|\tilde{Q}_\tau(z_0)|
\geq 1
\qquad (\tau \in \hat K_M).
$$
Likewise,  Lemma
\ref{lemma normalization f_n} gives a positive lower  bound for $|f_{m\lceil\tau\rceil,R}(z_{0})|$,
 uniformly in $\tau$.

Let $c,R_{0}>0$ be as in Proposition \ref{prop basic estimate}, and assume that $R>R_{0}$ and $\frac{(\log R)^{2}}{R^{2}}<c r$. By perturbing $R$ to a slightly smaller value we can ensure $Rz_0$ is not an integer, as assumed before.
Let  $r'<r$ be such that $\frac{(\log R)^{2}}{R^{2}}<cr'$.
From Proposition \ref{prop basic estimate} and (\ref{eq bound q's}) it follows that
there exists a constant $C>0$ so that
$$
|F_\tau(\lambda)|
\leq C(1+|\tau|)\, e^{r' m|\tau|}\, e^{\pi R\|\frac12\alpha^\vee\|\,\|\im \lambda\|}
\qquad\big(\lambda\in\af^*_\C,\tau\in\hat{K}_{M}\big)\,.
$$
By Proposition \ref{Prop inverse PW} this implies $V^\omega_{\lambda_0}(mr)\subset C^\infty_{AR+\epsilon}(G/K)*v_{K,\lambda_0}$, where $A=\pi\|\frac12\alpha^\vee\|$.
By Remark \ref{rmk scaling invariant} the continuous embedding in the theorem follows. Finally, as $v_{K,\lambda_{0}}$ is $\U(\gf)$-cyclic by Lemma \ref{lemma quotient}, we have $C^\infty_{R}(G)* v_{K,\lambda_0}=(V_{\lambda_{0}})^{\min}_{R}$.

\subsection{Proof of Theorem \ref{theorem rank one} in case of $m_{2\alpha}>0$.}\label{Subsection 2 roots}
We now identify $\af_\C^*$ with $\C$ via
$$
\C \mapsto \af_\C^*, \ \ z\mapsto 2z \alpha\, .
$$
From Lemma \ref{Lemma q polynomials} we then have
$$
\tilde{Q}_\tau(z)
= \Gamma_{a_\tau^1, b_\tau^1} (iz) \Gamma_{a_\tau^2, b_\tau^2} (iz).
$$
As before we  apply \eqref{eq estimate Gamma_ab}.  The result is now
$$
\tilde{Q}_\tau (- i |z|)
\leq  [q_{m \lceil\tau\rceil} (|z|)]^2\ .
$$
Next we define the $W$-symmetric entire function
$$ \phi_\tau(z) := \frac{[f_{ m\lceil\tau\rceil }(z)]^2}{[f_{ m\lceil\tau\rceil }(z_0)]^2}$$
and argue along the same lines as before.
This concludes the proof of Theorem \ref{theorem rank one}.

\section{The general higher rank case}\label{Section higher rank case}

The goal of this section is to prove the following

\begin{theorem} \label{main theorem}Let $V_{\lambda_0}$ be a representation of the
$K$-spherical principal series with $\lambda_0$ satisfying \eqref{Kos}. Then there exist positive constants $c, R_0$ independent of $\lambda_0$, such that for all $R,r>0$ with
$R>R_0$ and $\frac{(\log R)^2}{R^2} < c r$ we have a continuous embedding
$$
V_{\lambda_0}^\omega(r)
\subset C^\infty_R(G)* v_{K,\lambda_0}
=\big(V_{\lambda_{0}}\big)^{\min}_{R}\,.
$$
\end{theorem}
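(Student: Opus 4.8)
The plan is to deduce the statement from Proposition~\ref{Prop inverse PW}, i.e.\ to construct for each $\tau\in\hat K_M$ a holomorphic family $F_\tau\colon\af_\C^{*}\to\End(\Hc_\tau)$ which (i) satisfies $F_\tau(\lambda_0)=\id$, (ii) satisfies the intertwining relations $F_\tau(w\lambda)=J_{w,\lambda}[\tau]\circ F_\tau(\lambda)$, and (iii) obeys a uniform bound $\|F_\tau(\lambda)\|_{\mathrm{op}}\le C'(1+|\tau|)^{j}e^{r'|\tau|}(1+\|\lambda\|)^{l}e^{R'\|\im\lambda\|}$ with fixed $j,l$, some $r'<r$, and $R'$ a fixed multiple of $R$ depending only on $G$. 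Granting this, Proposition~\ref{Prop inverse PW} produces for every $\epsilon>0$ a continuous map $\varphi\colon V_{\lambda_0}^{\omega}(r)=\Hc^{\omega}(r)\to C^{\infty}_{R'+\epsilon}(G/K)$ with $v=\varphi(v)*v_{K,\lambda_0}$; since $\lambda_0$ satisfies~\eqref{Kos}, $v_{K,\lambda_0}$ is $\U(\gf)$-cyclic (Lemma~\ref{lemma quotient}), so $C^{\infty}_{R'+\epsilon}(G)*v_{K,\lambda_0}=(V_{\lambda_0})^{\min}_{R'+\epsilon}$, and the scaling invariance of Remark~\ref{rmk scaling invariant} absorbs the constant $R'/R$ and the $\epsilon$ to recover the relation $\tfrac{(\log R)^2}{R^2}<cr$.

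To build the $F_\tau$ I would follow Ansatz~2 of Section~\ref{strategy} and set
$$F_\tau(\lambda):=\sum_{w\in W}\phi_\tau(w^{-1}\lambda)\,J_{w,w^{-1}\lambda}[\tau]$$
for a scalar holomorphic function $\phi_\tau$ on $\af_\C^{*}$ still to be chosen. The cocycle identity $J_{w_1w_2,\mu}=J_{w_1,w_2\mu}\circ J_{w_2,\mu}$ for the normalized intertwiners gives at once, as a meromorphic identity and hence --- once holomorphy of $F_\tau$ is established --- as an identity of holomorphic functions, the relation in~(ii). Imposing that $\phi_\tau(\lambda_0)=\tfrac1{|W_{\lambda_0}|}$ and that $\phi_\tau$ vanish to sufficiently high ($\tau$-independent) order on $W\lambda_0\setminus\{\lambda_0\}$ then yields (i), using that $J_{w,\lambda_0}[\tau]=\id$ for $w\in W_{\lambda_0}$ (valid because $v_{K,\lambda_0}$ is cyclic and fixed by $J_{w,\lambda_0}$); the vanishing on $W\lambda_0\setminus\{\lambda_0\}$ costs only a fixed polynomial factor $p_{\lambda_0}$ on $\phi_\tau$ and simultaneously removes the poles that the summands $J_{w,w^{-1}\lambda}[\tau]$ may have over those orbit points.

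The substance lies in the choice of $\phi_\tau$, and here I would invoke Kostant's framework~\cite{Kos}: for a reduced expression $w=s_{\beta_1}\cdots s_{\beta_\ell}$ the operator $J_{w,\lambda}[\tau]$ factors as a composition of rank-one intertwining operators attached to the roots $\beta_i\in\Sigma^+\cap w^{-1}\Sigma^-$, each of which, restricted to the $K$-type $\tau$, is block-scalar with eigenvalues explicit ratios of the Gamma-polynomials $\Gamma_{a,b}$ of~\eqref{def Gamma ab} in the single variable $\lambda(\beta_i^\vee)$, with $b-a=O(|\tau|)$ as in Lemma~\ref{Lemma q polynomials}. Accordingly I would take $\phi_\tau$ to be $p_{\lambda_0}$ times a product over $\alpha\in\Sigma^+$ of one-variable functions of $\lambda(\alpha^\vee)$, each assembled from the entire function $f_{n,R}$ of~\eqref{def f_n} with $n=m\lceil\tau\rceil=O(|\tau|)$ (taken to an exponent $k\le2$ when $\gf^{\pm2\alpha}\neq0$, exactly as in Section~\ref{Subsection 2 roots}) and from the Gamma-polynomials whose zeros match the denominator zeros --- hence the poles --- of the rank-one intertwiners along $\alpha$, the whole product normalized by dividing by its value at $\lambda_0$. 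With this choice $F_\tau$ is entire; perturbing $R$ slightly so that $\lambda_0$ avoids the real zeros of the $f_{n,R}$, the normalizing denominators are bounded away from $0$ uniformly in $\tau$ by Lemma~\ref{lemma normalization f_n} together with $|\Gamma_{a,b}(z)|\ge1$ for $\re z\ge0$ from~\eqref{eq Gamma geq 1} and the positivity~\eqref{Kos}. After the engineered cancellations each summand of $F_\tau$ becomes a product over $\alpha\in\Sigma^+$ of expressions $f_{n,R}(z)^{k}P_\alpha(z)$ with $\|P_\alpha(z)\|_{\mathrm{op}}\le q_{n}(|z|)^{k}$, the $q_n$ of~\eqref{def q_n} dominating the relevant Gamma-polynomials by~\eqref{eq estimate Gamma_ab}; Proposition~\ref{prop basic estimate} applies to each factor, and carrying out the products over $W$ and over $\Sigma^+$ yields exactly the bound in~(iii) --- the scaling invariance of Remark~\ref{rmk scaling invariant} once more absorbing all constants and the loss coming from the number of roots --- provided $\tfrac{(\log R)^2}{R^2}<cr$ for $c$ small enough depending only on $G$.

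The main obstacle is precisely the construction of $\phi_\tau$: it must at the same time cancel the poles of every $J_{w,w^{-1}\lambda}[\tau]$ (poles whose number grows linearly in $|\tau|$, and which lie on imaginary rather than real hyperplanes), keep its polynomial growth in $\lambda$ bounded by a power of $\|\lambda\|$ with exponent independent of $\tau$, and remain bounded below at $\lambda_0$ uniformly in $\tau$. Balancing the first two is exactly what forces the ``sine-over-polynomial'' shape of the $f_{n,R}$ together with the sharp one-variable estimates of Section~\ref{Section construction}, and reconciling this with the multivariable situation is what forces Kostant's rank-one factorization of the intertwining operators so that the problem decouples root by root; checking that the pole hyperplanes of the $J_{w,\cdot}[\tau]$ --- including the parameter shifts produced by the reduced-word factorization --- are precisely those neutralized by the zeros built into $\phi_\tau$ is the principal bookkeeping task.
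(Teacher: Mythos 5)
Your proposal follows essentially the same route as the paper's own proof: Ansatz~2 with Weyl-group averaging $F_\tau(\lambda)=\sum_{w}\phi_\tau(w^{-1}\lambda)J_{w,w^{-1}\lambda}[\tau]$, pole cancellation root-by-root via Kostant's rank-one factorization and the $\Gamma_{a,b}$-polynomials, the one-variable estimate of Proposition~\ref{prop basic estimate} applied to $f_{m\lceil\tau\rceil,R}$, and the conclusion via Proposition~\ref{Prop inverse PW}, cyclicity and Remark~\ref{rmk scaling invariant}. The only deviations are cosmetic (folding $p_{\lambda_0}$ into $\phi_\tau$, and quoting the exponent from the rank-one case where the paper's general construction takes the eighth power of $\psi_{m\lceil\tau\rceil,R}$ to dominate $e_\tau$), so the argument is correct as outlined.
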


\begin{rmk} \label{rmk intro proof} By the arguments in Remark \ref{rmk intro proof rank one} we obtain Theorem \ref{intro theorem} of the introduction from Theorem \ref{main theorem} together with the reduction in Section~\ref{Section Reduction}.
\end{rmk}

To prepare for the proof of Theorem \ref{main theorem} we determine some estimates of the
intertwining operators $J_{w,\lambda}$. We start by recalling the standard procedure by which
the study of $J_{w, \lambda}$ is reduced to rank one.

\subsection{Factorization of intertwining operators}
Let $w\in W$ and write
$$w= s_1  s_2 \cdots  s_n$$
as a reduced expression with simple reflections $s_i$ associated to simple roots $\alpha_i\in \Pi$.
Set
$$
w_j:= s_{j+1} \cdots s_n\in W
\qquad(1\leq j\leq n).
$$
Then the reduced expression of $w$ satisfies the condition
\begin{equation} \label{eq root positive}
w_j^{-1}\alpha_j \in \Sigma^+\qquad (1 \leq j \leq n)
\end{equation}
and
\begin{equation}\label{eq different roots}
w_j^{-1}\alpha_j\neq w_k^{-1}\alpha_k
\qquad(1\leq j<k\leq n).
\end{equation}
See \cite[VI.1.6 Corollaire 2]{Bourbaki}.
Essential for our reasoning is the factorization
\begin{equation}\label{eq factorization J}
J_{w,\lambda}= J_{s_1,w_1\lambda}\circ J_{s_2,w_2\lambda}\circ \cdots J_{s_{n-1},w_{n-1}\lambda}\circ  J_{s_n, \lambda},
\end{equation}
with each
$$J_{s_j, w_j\lambda}: V_{w_j\lambda}^\infty  \to V_{w_{j-1}\lambda}^\infty$$
a rank one intertwiner.

\subsection{Rank one intertwining operators}
Let $s_\alpha\in W$ be the reflection in a simple root $\alpha\in\Sigma^+$.
When restricted to a specific $K$-type $\tau\in \hat K_M$, each
$J_{s_\alpha, \lambda}[\tau]$ is an element of $\End(V_{\tau^\vee}^M)$ depending rationally on $\lambda$.
We will describe the entries of a diagonal matrix for it.

Let $\gf_\alpha$ be the semisimple rank one subalgebra of $\gf$ generated by $\alpha$. Then
$$
\gf_\alpha
= \gf^{2\alpha} \oplus \gf^{\alpha}\oplus\af_\alpha \oplus \mf_\alpha  \oplus \gf^{-\alpha} \oplus \gf^{-2\alpha}
$$
with $\af_\alpha = \R \alpha^\vee$ and $\mf_\alpha\triangleleft \mf$ an ideal.
In particular,  the Cartan decomposition $\gf=\kf \oplus\sf$ descends to $\gf_\alpha$, and we obtain with
$\kf_\alpha:=\gf_\alpha \cap \kf$ a maximal compact subalgebra of $\gf_\alpha$.
We denote by $G_\alpha:= \la \exp(\gf_\alpha)\ra$ the analytic subgroup of $G$ associated to $\gf_\alpha$, by
$K_\alpha:=\exp(\kf_\alpha)$ the maximal compact subgroup of $G_\alpha$ with Lie algebra $\kf_\alpha$, and by $M_\alpha$ the group $M\cap K_\alpha$.
Note that $M$ normalizes $G_\alpha$. Hence if we branch $V_{\tau^\vee}$ with respect to $K_\alpha$, then
$$
V_{\tau^\vee}^M
= \bigoplus_{\delta\in \hat {K_\alpha}_{M_\alpha}} m(\delta) V_\delta^{M_\alpha},
$$
where $m(\delta)$ denotes the multiplicity of $\delta$ in $\tau^{\vee}|_{K_\alpha}$.  As $K_\alpha/M_\alpha$ is symmetric, each $V_\delta^{M_\alpha}$ is one-dimensional. We choose an orthonormal basis (depending on $j$) of $V_{\tau^{\vee}}^{M}$ of vectors from these one-dimensional subspaces.

For elements $\frac{1}{2} \leq a\leq b$ with $a, b \in \frac{1}{2} \N$ and $b-a\in\N_{0}$, we recall the polynomials $\Gamma_{a,b}(z)$ from \eqref{def Gamma ab}.
With respect to the chosen basis the operator $J_{s_\alpha,\lambda}[\tau]$
is of diagonal form, say
$$
D_{\tau}(\lambda)
= \diag( d_\tau^{1} (\lambda), \ldots, d_\tau^{l(\tau)}(\lambda)),
$$
and each diagonal entry is of the form (see \eqref{factor tau},  Lemma \ref{Lemma q polynomials})
\begin{equation} \label{diag entry}
d_\tau^{k}(\lambda)
= \frac{\Gamma_{a, b_{k}}(- i\lambda(\frac{\alpha^{\vee}}{\gamma_\alpha}))
\Gamma_{a', b_{k}'}(- i\lambda(\frac{\alpha^{\vee}}{\gamma_\alpha}))}{\Gamma_{a, b_{k}}( i\lambda(\frac{\alpha^{\vee}}{\gamma_\alpha}))\Gamma_{a', b_{k}'}( i\lambda(\frac{\alpha^{\vee}}{\gamma_\alpha}))}\qquad (1\leq k \leq l(\tau)),
\end{equation}
where $\gamma_\alpha=2$ and $a'=b'_{k}$ if $m_{2\alpha}=0$, and otherwise $\gamma_\alpha=4$.
The parameters $a$ and $a'$ depend only on $\alpha$, and
for all $\tau\in\hat K$ the parameters $b_k$ and $b'_k$ satisfy
$$
 b_{k}'-a'\leq b_{k}-a\leq m |\tau| \qquad (1\leq k\leq l(\tau) )
$$
for some $m\in \N$ independent of $\tau$ and $\alpha$. Therefore we may and shall assume
that $b_k, b_k'\leq m|\tau|$ for all non-trivial $\tau$.

\subsection{Cancellation of poles and estimate}
Let $\alpha\in\Sigma^+$ be a simple root and let $\tau\in\hat K_M$.
In the following lemma we determine a polynomial on $\af_\C^*$ which
cancels the poles of $J_{s_\alpha, \lambda}[\tau]$. Moreover, we give an estimate
of the product of
$J_{s_\alpha, \lambda}[\tau]$ with this polynomial.

As in Section \ref{Section rank one cases} we write $\lceil\tau\rceil=\lceil|\tau|\rceil$.
We  define
the following polynomial on $\C$,
\begin{equation}\label{defi e_tau}
e_\tau(z)
:= \Gamma_{1, m\lceil\tau\rceil+1}(z)^{2} \,\Gamma_{\frac{1}{2},m\lceil\tau\rceil+\frac{1}{2}}(z)^{2},
\end{equation}
and recall the polynomials $q_{n}(z)$ from (\ref{def q_n}). In particular, we see from
\eqref{eq estimate Gamma_ab} that we can estimate the polynomial $e_\tau(z)$ by
\begin{equation}\label{eq estimate e_tau}
|e_\tau(z)| \leq (1+2\lceil\tau\rceil)^2 \,q_{m\lceil\tau\rceil}(|z|)^4
\end{equation}
for all $z\in \C$ and all $\tau\in\hat K$.

\begin{lemma} \label{lemma estimate J}
Let $\alpha\in\Sigma^+$ be simple.
\begin{enumerate}[\rm (i)]
\item \label{item 1 of lemma estimate J}
The map $\af_\C^*\to \End(V^M_{\tau^\vee})$ given by
$$
\lambda\mapsto e_\tau\big(i\lambda(\tfrac{\alpha^\vee}{\gamma_\alpha})\big) J_{s_\alpha, \lambda}[\tau]
$$
is polynomial  for every $\tau$ in $\hat K$.
\item \label{item 2 of lemma estimate J}
There exists a constant $C>0$ such that
$$
\|\,e_\tau\big(i\lambda(\tfrac{\alpha^{\vee}}{\gamma_\alpha})\big) J_{s_\alpha, \lambda}[\tau]\,\|_{\rm op}
\leq  C(1+|\tau|)^{4}\, q_{m\lceil\tau\rceil} \big(\big|\lambda(\tfrac{\alpha^{\vee}}{\gamma_\alpha})\big|\big)^8
$$
for every $\tau \in \hat K$ and $\lambda\in \af_\C^*$.
\end{enumerate}
\end{lemma}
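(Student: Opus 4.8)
The plan is to reduce the whole statement to a one-variable computation with the polynomials $\Gamma_{a,b}$ of \eqref{def Gamma ab}. Fix a simple root $\alpha$ and $\tau\in\hat K_M$; for $\tau\in\hat K\setminus\hat K_M$ the space $V^M_{\tau^\vee}$ vanishes and for the trivial type $J_{s_\alpha,\lambda}$ is the identity on the spherical line, so both cases are immediate. In the orthonormal basis of $V^M_{\tau^\vee}$ already chosen, $J_{s_\alpha,\lambda}[\tau]$ is the diagonal matrix $D_\tau(\lambda)=\diag(d_\tau^1(\lambda),\ldots,d_\tau^{l(\tau)}(\lambda))$, so it suffices to treat each entry. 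Writing $z:=i\lambda(\tfrac{\alpha^\vee}{\gamma_\alpha})$ and $N:=m\lceil\tau\rceil$, formula \eqref{diag entry} reads
$$d_\tau^k(\lambda)=\frac{\Gamma_{a,b_k}(-z)\,\Gamma_{a',b_k'}(-z)}{\Gamma_{a,b_k}(z)\,\Gamma_{a',b_k'}(z)},$$
while $e_\tau(z)=\Gamma_{1,N+1}(z)^2\,\Gamma_{1/2,N+1/2}(z)^2$ by \eqref{defi e_tau}. Here I will use that $a,a'$ lie in $\tfrac12\N$, are $\geq\tfrac12$ and depend only on $\alpha$, and that $b_k,b_k'\leq m|\tau|\leq N$.

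The heart of part (i) is the claim that $\tilde{g}_k(z):=\frac{e_\tau(z)}{\Gamma_{a,b_k}(z)\,\Gamma_{a',b_k'}(z)}$ is a polynomial. Granting this, $e_\tau(z)\,d_\tau^k(\lambda)=\tilde{g}_k(z)\,\Gamma_{a,b_k}(-z)\,\Gamma_{a',b_k'}(-z)$ is a polynomial in $z$, hence in $\lambda$, and since this holds entrywise the $\End(V^M_{\tau^\vee})$-valued map of part (i) is polynomial. To prove the claim I would read the zeros off from \eqref{def Gamma ab}: $\Gamma_{1,N+1}$ has the simple zeros $-1,\ldots,-N$, $\Gamma_{1/2,N+1/2}$ the simple zeros $-\tfrac12,-\tfrac32,\ldots,-(N-\tfrac12)$, and these two sets are disjoint, so $e_\tau$ vanishes to order exactly $2$ at each of the $2N$ points $-\tfrac12,-1,-\tfrac32,\ldots,-N$ and is non-zero elsewhere. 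Meanwhile $\Gamma_{a,b_k}$ has the simple zeros $-a,-a-1,\ldots,-(b_k-1)$, and because $a\in\tfrac12\N$, $a\geq\tfrac12$ and $b_k\leq N$, each of these is one of the $2N$ points above; the same holds for $\Gamma_{a',b_k'}$. Hence $\Gamma_{a,b_k}(z)\,\Gamma_{a',b_k'}(z)$ vanishes to order at most $2$ at each of its zeros — even when the two factors share a zero — so it divides $e_\tau$, which is exactly why \eqref{defi e_tau} carries the square. Along the way I record that all roots of $\tilde{g}_k$ are real and negative, and that its leading coefficient is positive, being a quotient of positive leading coefficients.

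For part (ii) I would use the elementary fact that a polynomial $p(z)=C_0\prod_j(z-\rho_j)$ with $C_0>0$ and all $\rho_j\leq0$ satisfies $|p(z)|=C_0\prod_j|z-\rho_j|\leq C_0\prod_j(|z|-\rho_j)=p(|z|)$ for every $z\in\C$. Applying this to $\tilde{g}_k$, together with $|\Gamma_{a,b}(-z)|\leq\Gamma_{a,b}(|z|)$ from \eqref{eq estimate Gamma_ab}, gives
$$|e_\tau(z)\,d_\tau^k(\lambda)|=|\tilde{g}_k(z)|\,|\Gamma_{a,b_k}(-z)|\,|\Gamma_{a',b_k'}(-z)|\leq\tilde{g}_k(|z|)\,\Gamma_{a,b_k}(|z|)\,\Gamma_{a',b_k'}(|z|)=e_\tau(|z|),$$
the last equality being the polynomial identity $\tilde{g}_k\,\Gamma_{a,b_k}\,\Gamma_{a',b_k'}=e_\tau$ evaluated at $|z|$. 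Since the operator norm of a diagonal matrix in an orthonormal basis is the largest modulus of its entries, taking the maximum over $k$ yields $\|e_\tau(i\lambda(\tfrac{\alpha^\vee}{\gamma_\alpha}))\,J_{s_\alpha,\lambda}[\tau]\|_{\mathrm{op}}\leq e_\tau(|\lambda(\tfrac{\alpha^\vee}{\gamma_\alpha})|)$; feeding this into \eqref{eq estimate e_tau} and using $\lceil\tau\rceil\leq1+|\tau|$ together with $q_{m\lceil\tau\rceil}\geq1$ on $[0,\infty)$ produces the stated bound with exponents $4$ and $8$ — in fact a stronger one with exponents $2$ and $4$.

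The one genuinely delicate point is the divisibility claim in part (i): one must keep the integer zeros of $\Gamma_{1,N+1}$ and the half-odd-integer zeros of $\Gamma_{1/2,N+1/2}$ apart, check that every zero of $\Gamma_{a,b_k}(z)\,\Gamma_{a',b_k'}(z)$ — counted with multiplicity — is a zero of $e_\tau(z)$, and for this extract the precise parity of $a,a'$, the uniform bound $b_k,b_k'\leq m|\tau|$, and the observation that the square in \eqref{defi e_tau} is present exactly to absorb a possibly coinciding pair of simple poles of $J_{s_\alpha,\lambda}[\tau]$. Everything else — the reduction to diagonal entries, the inequality $|p(z)|\leq p(|z|)$, and the bookkeeping with $\lceil\tau\rceil$ — is routine.
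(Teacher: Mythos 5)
Your proof is correct and follows essentially the same route as the paper: verify that the denominator $\Gamma_{a,b_k}(z)\Gamma_{a',b_k'}(z)$ divides $e_\tau(z)$ (you do this by counting zeros with multiplicity, the paper by the equivalent observation that $\Gamma_{a,b}$ divides $\Gamma_{a,b+n}$ and $\Gamma_{a-n,b}$), and then estimate each polynomial factor by its value at $|z|$ using that all roots are real and nonpositive. Your organization of part (ii) — cancelling the denominator into $e_\tau$ before estimating, rather than bounding numerator and denominator separately — even yields the slightly sharper bound $e_\tau(|z|)$ in place of the paper's $e_\tau(|z|)^2$, hence exponents $2$ and $4$, which of course implies the stated inequality.
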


\begin{proof} We may assume $|\tau|\neq 0$ since
$J_{s_\alpha, \lambda}[\tau]=1$ for the trivial $K$-type.
We fix a simple root $\alpha\in\Sigma^+$ and define polynomials $d_\tau^{k, +}$ and $d_\tau^{k, -}$ to be the numerator and denominator in
\eqref{diag entry}, respectively. Then
$$
d_\tau^{k}(\lambda)
=\frac{d_\tau^{k, +} (\lambda)}{d_\tau^{k, -}(\lambda)}.
$$

Next we make the following observation: $\Gamma_{a,b}(z)$ divides
$\Gamma_{a, b+n}(z)$ for all $n\in \N_0$, and $\Gamma_{a, b}(z)$ divides
$\Gamma_{a-n, b}(z)$ for all $n\in \N_0$ such that $a-n\geq \frac{1}{2}$.
It follows that
$$
d_{\tau}^{k, -} (\lambda) \big | e_\tau\big(i\lambda(\tfrac{\alpha^\vee}{\gamma_\alpha})\big)
$$
for all $k$, and this implies \eqref{item 1 of lemma estimate J}. Moreover, together with
(\ref{eq Gamma geq 1}) it implies
$$
\left|\frac{e_\tau\big(i\lambda(\tfrac{\alpha^\vee}{\gamma_\alpha})\big)}{d_{\tau}^{k, -} (\lambda)}\right|
\le
\frac{e_\tau\big(|\lambda(\tfrac{\alpha^\vee}{\gamma_\alpha})|\big)}{d_{\tau}^{k, -} (|\lambda|)}
\le
e_\tau\big(|\lambda(\tfrac{\alpha^\vee}{\gamma_\alpha})|\big)
$$
for all $\lambda\in  \af_\C^*$.
For the numerator $d_\tau^{k, +} (\lambda)$ we find
$$
\big|d_{\tau}^{k,+}(\lambda)\big|
\leq e_\tau\big(|\lambda(\tfrac{\alpha^\vee}{\gamma_\alpha})|\big)
$$
also for all $\lambda\in  \af_\C^*$.
Hence
$$
\big|e_\tau\big(i\lambda(\tfrac{\alpha^\vee}{\gamma_\alpha})\big) \,d_{\tau}^{k}(\lambda)\big|
\leq e_\tau\big(|\lambda(\tfrac{\alpha^\vee}{\gamma_\alpha})|\big)^2
$$
for all indices $k$. By \eqref{eq estimate e_tau} this implies \eqref{item 2 of lemma estimate J}.
\end{proof}

\subsection{Application of  Proposition \ref{prop basic estimate}}
The following lemma contains the main estimate for the proof of Theorem \ref{main theorem}.
Recall from \eqref{def f_n} the functions $f_{n,R}$.
We will determine an estimate for a product of these functions with $J_{w,\lambda}$.
For this we use Lemma \ref{lemma estimate J}, Proposition \ref{prop basic estimate} and
the factorization \eqref{eq factorization J}.

   Let
$$h:=8\pi\max_{\alpha\in\Sigma^+} \|\tfrac{\alpha^\vee}{\gamma_\alpha}\|\,.$$

 \begin{lemma}\label{lemma estimate J with w}
There exist $ c, R_0>0$ and for every $r>0$ a constant $C_{r}>0$  so that for every $R>R_0$ with
\begin{equation}\label{estimate with R and cr}
\frac{(\log R)^2}{R^2} <c r
\end{equation}
one has
\begin{align*}
\Big\|\Big(\prod_{\alpha\in\Sigma^+} f_{m\lceil\tau\rceil,R}\big(\lambda(\tfrac{\alpha^{\vee}}{\gamma_{\alpha}})\big)^8  \,
&e_\tau\big(i\lambda(\tfrac{\alpha^{\vee}}{\gamma_{\alpha}})\big)
\Big)\,
J_{w, \lambda}[\tau]\,\Big\|_{\mathrm{op}} \\
&\leq
C_r\,\Big[(1+|\tau|)^{4}\,e^{8mr|\tau|}e^{hR\,\|\!\im \lambda\|}\Big]^{|\Sigma^+|}
\end{align*}
for all $\tau\in\hat K$, $\lambda\in  \af_\C^*$ and $w\in W$.
\end{lemma}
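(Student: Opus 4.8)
\medskip

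The plan is to reduce everything to the one–variable estimate of Proposition \ref{prop basic estimate} by means of the factorization \eqref{eq factorization J} of $J_{w,\lambda}$ into rank one intertwiners, estimating each factor separately and then multiplying with the help of submultiplicativity of the operator norm. Throughout, $c$ and $R_0$ are taken to be those of Proposition \ref{prop basic estimate}, so that the hypothesis $\tfrac{(\log R)^2}{R^2}<cr$ of the present lemma is exactly what is needed to apply that proposition with this same $r$.

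Fix $w\in W$ with a reduced expression $w=s_1\cdots s_n$, $n=\ell(w)\le|\Sigma^+|$, and set $w_j=s_{j+1}\cdots s_n$ as in \eqref{eq factorization J}; by \eqref{eq root positive} and \eqref{eq different roots} the roots $\beta_j:=w_j^{-1}\alpha_j$ for $1\le j\le n$ are pairwise distinct elements of $\Sigma^+$. Restricting \eqref{eq factorization J} to the $\tau$–isotypic component yields the ordered composition $J_{w,\lambda}[\tau]=J_{s_{\alpha_1},w_1\lambda}[\tau]\circ\cdots\circ J_{s_{\alpha_n},\lambda}[\tau]$ of operators in $\End(V_{\tau^{\vee}}^{M})$. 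The structural point is that, by \eqref{diag entry}, each rank one operator $J_{s_{\alpha_j},\mu}[\tau]$ depends on $\mu\in\af_\C^*$ only through the single scalar $\mu(\tfrac{\alpha_j^\vee}{\gamma_{\alpha_j}})$; and since $m_\alpha$, $m_{2\alpha}$, hence also $\gamma_\alpha$, are constant on Weyl orbits, the $j$–th factor $J_{s_{\alpha_j},w_j\lambda}[\tau]$ depends only on $z_j:=(w_j\lambda)(\tfrac{\alpha_j^\vee}{\gamma_{\alpha_j}})=\lambda(\tfrac{\beta_j^\vee}{\gamma_{\beta_j}})$. Accordingly I distribute the scalar prefactor $\prod_{\alpha\in\Sigma^+}f_{m\lceil\tau\rceil,R}(\lambda(\tfrac{\alpha^\vee}{\gamma_\alpha}))^8 e_\tau(i\lambda(\tfrac{\alpha^\vee}{\gamma_\alpha}))$, inserting the scalar $f_{m\lceil\tau\rceil,R}(z_j)^8 e_\tau(iz_j)$ next to the $j$–th intertwiner in the composition and keeping the $|\Sigma^+|-n$ factors indexed by $\alpha\notin\{\beta_1,\dots,\beta_n\}$ as bare scalars.

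For the factor carrying the $j$–th intertwiner, Lemma \ref{lemma estimate J}\eqref{item 1 of lemma estimate J}, applied with the simple root $\alpha_j$ and the parameter $w_j\lambda$, shows that $e_\tau(iz_j)\,J_{s_{\alpha_j},w_j\lambda}[\tau]$ is a polynomial in $z_j$ with values in $\End(V_{\tau^{\vee}}^{M})$, and Lemma \ref{lemma estimate J}\eqref{item 2 of lemma estimate J} bounds its operator norm by $C(1+|\tau|)^4 q_{m\lceil\tau\rceil}(|z_j|)^8$. Dividing out the constant $C(1+|\tau|)^4$ and applying Proposition \ref{prop basic estimate} with $k=8$ and $n=m\lceil\tau\rceil$ then bounds $\|f_{m\lceil\tau\rceil,R}(z_j)^8\,e_\tau(iz_j)\,J_{s_{\alpha_j},w_j\lambda}[\tau]\|_{\mathrm{op}}$ by $C(1+|\tau|)^4[C_r\,e^{rm\lceil\tau\rceil}\,e^{R\pi|\im z_j|}]^8$. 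For a leftover factor, with argument $\mu=\lambda(\tfrac{\alpha^\vee}{\gamma_\alpha})$, estimate \eqref{eq estimate e_tau} gives $|e_\tau(i\mu)|\le(1+2\lceil\tau\rceil)^2 q_{m\lceil\tau\rceil}(|\mu|)^8$ (using $q_n\ge 1$), and Proposition \ref{prop basic estimate} with $k=8$ bounds $|f_{m\lceil\tau\rceil,R}(\mu)^8 e_\tau(i\mu)|$ by $9(1+|\tau|)^4[C_r e^{rm\lceil\tau\rceil}e^{R\pi|\im\mu|}]^8$. In every case the argument $z$ has the form $\lambda(\tfrac{\beta^\vee}{\gamma_\beta})$ with $\beta\in\Sigma^+$, so that $8R\pi|\im z|\le hR\|\im\lambda\|$ by the definition of $h$, while $e^{rm\lceil\tau\rceil}\le e^{rm}e^{mr|\tau|}$ since $\lceil\tau\rceil\le|\tau|+1$.

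Multiplying the $|\Sigma^+|$ resulting estimates — invoking submultiplicativity of the operator norm, the identification $J_{w,\lambda}[\tau]=J_{s_{\alpha_1},w_1\lambda}[\tau]\circ\cdots\circ J_{s_{\alpha_n},\lambda}[\tau]$, and that scalars commute with all operators — produces the asserted inequality, with $C_r$ absorbing the $r$–dependent factor $[\max(C,9)C_r^8]^{|\Sigma^+|}e^{8rm|\Sigma^+|}$. The only substantial ingredient is Proposition \ref{prop basic estimate}; the remaining work is bookkeeping, and the one point that must be got right — it is precisely what makes the reduction possible — is that the rank one spectral parameters attached to the factors of a reduced word for $w$ are exactly the values of $\lambda$ at $n$ distinct coroots, which allows Proposition \ref{prop basic estimate} to be applied one complex variable at a time; one must also check that the polynomial prefactors in $|\tau|$, coming from $e_\tau$ and from Lemma \ref{lemma estimate J}\eqref{item 2 of lemma estimate J}, combine to precisely $(1+|\tau|)^{4|\Sigma^+|}$ as in the statement.
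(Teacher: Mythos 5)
Your proof is correct and follows essentially the same route as the paper's: factorize $J_{w,\lambda}$ via a reduced word, apply Lemma \ref{lemma estimate J} together with Proposition \ref{prop basic estimate} (with $k=8$ and $n=m\lceil\tau\rceil$) to each rank-one factor and to the leftover scalar factors, and multiply using submultiplicativity and the distinctness of the positive roots $w_j^{-1}\alpha_j$. The only organizational difference is that you attach the prefactors directly to the parameters $z_j=\lambda(\tfrac{\beta_j^{\vee}}{\gamma_{\beta_j}})$, making the Weyl-invariance of $\gamma_\alpha$ explicit, whereas the paper first proves the simple-root estimate at a general $\lambda$ and then substitutes $w_j\lambda$; the content is identical.
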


\begin{proof} Let $ c$, $R_0$ be as in Proposition \ref{prop basic estimate} and
let $r>0$. We first show that there exists a constant $C_r>0$ such that
if $R>R_0$ satisfies \eqref{estimate with R and cr} then
\begin{equation}\label{estimate J for simple roots}
\big\|   f_{m\lceil\tau\rceil,R}\big(\lambda(\tfrac{\alpha^{\vee}}{\gamma_\alpha})\big)^8  \, e_\tau\big(i\lambda(\tfrac{\alpha^{\vee}}{\gamma_\alpha})\big)\,
J_{s_\alpha, \lambda}[\tau]\big\|_{\mathrm{op}}
\leq
C_{r}(1+|\tau|)^4\,e^{8mr|\tau|}e^{ hR\,\|\!\im \lambda\|}
\end{equation}
for all $\tau\in\hat K$, $\lambda\in  \af_\C^*$ and all simple roots $\alpha\in\Sigma^+$.

We apply Proposition \ref{prop basic estimate} with $n=m\lceil\tau\rceil$ and
$$P(z)=(1+|\tau|)^{-4}\, e_\tau(iz)\, J_{ s_\alpha, z\mu}[\tau],$$
where
$$\mu=\lambda(\tfrac{\alpha^{\vee}}{\gamma_\alpha})^{-1}\lambda\in  \af_\C^*.$$
The estimate in Lemma \ref{lemma estimate J} ensures the proposition is applicable.
Hence
\begin{equation*}
\big\|f_{m\lceil\tau\rceil,R}(z)^8  \, e_\tau(iz) \,
J_{s_\alpha, z\mu}[\tau]\big\|_{\mathrm{op}}
\leq
(1+|\tau|)^4 \, [C_r \, e^{rm\lceil\tau\rceil} e^{\pi R\,\|\!\im z\|}]^8.
\end{equation*}
By inserting
$z=\lambda(\tfrac{\alpha^{\vee}}{\gamma_\alpha})$ and $\lceil\tau\rceil\le |\tau|+1$
we obtain \eqref{estimate J for simple roots} for some $C_r>0$.

Let $w\in W$ and
consider the factorization \eqref{eq factorization J} of $J_{w,\lambda}$. By
submultiplicativity of the operator norm we obtain from \eqref{estimate J for simple roots}
that
\begin{align*}
\Big\|\Big(\prod_{j=1}^n  f_{m\lceil\tau\rceil,R}\big(\lambda(\tfrac{w_j^{-1}\alpha_j^{\vee}}{\gamma_{\alpha_j}})\big)^8  \,
&e_\tau\big(i\lambda(\tfrac{w_j^{-1}\alpha_j^{\vee}}{\gamma_{\alpha_j}})\big)\Big)\,
J_{w, \lambda}[\tau]\,\Big\|_{\mathrm{op}} \\
&\leq
\big[C_r(1+|\tau|)^{4}\,e^{8mr|\tau|}e^{hR\,\|\!\im \lambda\|}\big]^n.
\end{align*}
The $w_j^{-1}\alpha_j$ are all distinct and positive by \eqref{eq different roots} and \eqref{eq root positive}, respectively. Hence each factor of the above product over $j$ occurs exactly once in
$$
\prod_{\alpha\in\Sigma^+}  \big[f_{m\lceil\tau\rceil,R}\big(\lambda(\tfrac{\alpha^{\vee}}{\gamma_{\alpha}})\big)^8  \,
e_\tau\big(i\lambda(\tfrac{\alpha^{\vee}}{\gamma_{\alpha}})\big)
\big].
$$
On the other hand, since by \eqref{eq estimate e_tau}
the scalar valued polynomial $e_\tau$
satisfies the estimate
$$|e_\tau(z)|\leq (1+2|\tau|)^2 q_{m\lceil\tau\rceil}(|z|)^4
\leq  (1+2|\tau|)^4 q_{m\lceil\tau\rceil}(|z|)^8, $$
we obtain in analogy with \eqref{estimate J for simple roots}
that
$$\big|f_{m\lceil\tau\rceil,R}(\lambda(\tfrac{\alpha^{\vee}}{\gamma_{\alpha}}))^8 \,
e_\tau(\lambda(\tfrac{\alpha^{\vee}}{\gamma_{\alpha}}))\big|
\leq
C_r (1+|\tau|)^4 e^{8mr|\tau|} e^{hR\,\|\! \im\lambda\|}$$
for every $\alpha\in\Sigma^+$.
We apply this estimate to the roots $\alpha\in\Sigma^+$ which are not of the form $w_j^{-1}\alpha_j$ for any $j$ and obtain the estimate as stated in the lemma.
\end{proof}

\subsection{Conclusion of proof}

We can now give the proof of Theorem \ref{main theorem}, following
Ansatz 2 from Section \ref{subsection Ansatz 2}.
Recall that $\lambda_0$ satisfies \eqref{Kos}, that is,
$$\re(i\lambda_0(\alpha^\vee))\ge 0$$
for all $\alpha\in\Sigma^+$. 
We define the following functions on
$\af_\C^*$.
\begin{enumerate}[I.]
\item We  choose a polynomial $p_{\lambda_0}:\af_\C^*\to\C$ such that 
$$
\begin{cases}
p_{\lambda_{0}}(\lambda_{0})=\frac{1}{|W_{\lambda_0}|}\,,\\
p_{\lambda_{0}}(w\lambda_{0})=0\qquad(w\in W\setminus W_{\lambda_{0}})\,,
\end{cases}
$$
where $W_{\lambda_0}\subset W$ is the stabilizer of $\lambda_0$.
\item For each $\tau\in \hat K_M$ we define a polynomial
$$
p_{\tau}(\lambda)
:=\frac
{\prod_{\alpha\in\Sigma^{+}} e_\tau\big(i\lambda(\tfrac{\alpha^\vee}{\gamma_\alpha})\big)}
{\prod_{\alpha\in\Sigma^{+}} e_\tau\big(i\lambda_{0}(\tfrac{\alpha^\vee}{\gamma_\alpha})\big)}.
$$
It follows from \eqref{defi e_tau} and \eqref{eq Gamma geq 1} that
\begin{equation}\label{estimate below on e_tau}
|e_\tau\big(i\lambda_{0}(\tfrac{\alpha^\vee}{\gamma_\alpha})\big)|
\ge 1
\end{equation}
for all $\alpha\in\Sigma^{+}$.
\item For every
$R>0$ for which
\begin{equation}\label{eq condition on R}
\forall \alpha\in\Sigma^+:\quad R\lambda_{0}(\tfrac{\alpha^{\vee}}{\gamma_\alpha}) \notin \Z\bs\{0\},
\end{equation}
we define for each $n\in\N_0$
 an entire function on $\af_\C^*$ by
\begin{equation*}
\psi_{n,R}(\lambda)
:=\frac{\prod_{\alpha\in \Sigma^{+}} f_{n,R} \big(\lambda(\frac{\alpha^\vee}{\gamma_\alpha})\big)}{\prod_{\alpha\in \Sigma^{+}} f_{n,R} \big(\lambda_0(\frac{\alpha^\vee}{\gamma_\alpha})\big)}.
\end{equation*}
By \eqref{eq condition on R} and
Lemma \ref{lemma normalization f_n} there exists a constant $c_{R}>0$ so that
\begin{equation}\label{estimate below on f_{n,R}}
|f_{n,R}\big(\lambda_0(\tfrac{\alpha^\vee}{\gamma_\alpha})\big)| \ge c_{R}
\end{equation}
for all $n\in\N_0$ and $ \alpha\in\Sigma^+$.
\end{enumerate}
After these definitions we let
$$
\phi_\tau(\lambda)
:= p_\tau(\lambda)  [\psi_{m \lceil\tau\rceil, R}(\lambda)]^8\,
$$
for $\tau\in\hat{K}_{M}$, and we define $F_{\tau}:\af_{\C}^{*}\to\End(V_{\tau})$ by
$$
F_{\tau}(\lambda)
=\sum_{w\in W} \phi_{\tau}(w^{-1}\lambda)p_{\lambda_{0}}(w^{-1}\lambda)J_{w,w^{-1}\lambda}[\tau]
\qquad(\lambda\in\af_{\C}^{*})\,.
$$
We are going to apply Proposition \ref{Prop inverse PW} to  $F_\tau$, and for that we need to verify its
conditions (\ref{Prop inverse PW - assumption 1})-(\ref{Prop inverse PW - assumption 3}). As explained in Section \ref{subsection Ansatz 2}, condition (\ref{Prop inverse PW - assumption 1}) follows from
the fact that $v_{K,\lambda_0}$ is cyclic for $V_{\lambda_0}$ (see \eqref{property of Ansatz 2}), and
(\ref{Prop inverse PW - assumption 2})  is an automatic consequence of the cocycle condition
$$J_{w_2,w_1\lambda}\circ J_{w_1,\lambda}=J_{w_2w_1,\lambda}$$
for the intertwining operators.

Let $c,R_0$ be as in Lemma \ref{lemma estimate J with w}, and let $r>0$ and $R>R_0$
satisfy \eqref{estimate with R and cr}. Let $r'<r$ be such that
$$\frac{(\log R)^2}{R^2}<cr'.$$
By perturbing $R$ to a slightly smaller value we may assume that \eqref{eq condition on R} is valid.
It follows from Lemma \ref{lemma estimate J with w}
together with the denominator estimates \eqref{estimate below on e_tau}-\eqref{estimate below on f_{n,R}}
that there exists a constant $C>0$ so that for every $\lambda\in\af_{\C}^{*}$ and $\tau\in\hat{K}_{M}$
$$
\|F_{\tau}(\lambda)\|_{\mathrm{op}}
\leq C (1+|\tau|)^{4|\Sigma^+|}\,e^{ar' |\tau|} \,(1+\|\lambda\|)^{\deg p_{\lambda_0}}
e^{ AR \,\|\im \lambda\|}\, ,
$$
where $a=8m|\Sigma^{+}|$ and $A=h|\Sigma^{+}|$.
This gives the remaining condition (\ref{Prop inverse PW - assumption 3}) of Proposition \ref{Prop inverse PW}, and with that
can conclude
that there is a continuous embedding
$$V_{\lambda_0}^\omega(ar)\subset C^\infty_{AR+\epsilon}(G)* v_{K,\lambda_0}\, .$$
By Remark \ref{rmk scaling invariant} this implies the continuous embedding in Theorem \ref{main theorem}. Finally, as $v_{K,\lambda_{0}}$ is $\U(\gf)$-cyclic by Lemma \ref{lemma quotient}, we have $C^\infty_{R}(G)* v_{K,\lambda_0}=(V_{\lambda_{0}})^{\min}_{R}$.

\appendix
\section{The domains $\kf(R)$}\label{Appendix A}
We recall the open $\Ad(K)$-invariant domains $\kf(R)$, with $R>0$, from  \eqref{def k(R) 2}.
In this appendix we describe these in two interesting examples.

\subsection{The unit disc: $G=\SU(1,1)$}
While treating this example we use a notation so that the generalization
to general Hermitian symmetric spaces becomes straightforward. First note that
$G_\C= \SL(2,\C)$ acts transitively on the projective space $\P^1(\C)$. We identify $\P^{1}(\C)$ with $\C \cup\{\infty\}$ via the map
$$
\P^{1}(\C)\to\C \cup\{\infty\},\quad \C\left(
                                         \begin{array}{cc}
                                           z  \\
                                           1  \\
                                         \end{array}
                                       \right)
                                       \mapsto z.
$$
We define the subgroups of $G$
$$K=\left\{ k_\theta:=\begin{pmatrix} e^{i\theta} & 0 \\ 0 & e^{-i\theta}\end{pmatrix}\mid \theta \in \R\right\} \quad
\hbox{and} \quad A=\left\{ a_t = \begin{pmatrix}\cosh t  & \sinh t \\ \sinh t &\cosh t\end{pmatrix}\mid t\in \R\right\}$$
and note that $K_\C=\left\{ \begin{pmatrix} z & 0 \\ 0 & z^{-1}\end{pmatrix} \mid z\in \C^*\right\}$. Further
we  define unipotent abelian subgroups of $G_\C$ by
$$P^+:=\left\{ \begin{pmatrix} 1 & z\\ 0 & 1\end{pmatrix} \mid z\in \C\right\} \quad \hbox{and}
\quad P^-:=\left\{ \begin{pmatrix}  1& 0\\ z & 1\end{pmatrix} \mid z\in \C\right\} \,.$$
Note that $P^{+}$ and $P^{-}$ are the stabilizers of $\infty$ and $0$, respectively.
Then both $K_\C P^\pm$ are Borel subgroups of $G_\C$ with $K_\C P^+ \cap K_\C P^-=K_\C$. Hence,  $Z_\C= G_\C/ K_\C$ is realized as an open affine subvariety of
the projective variety $G_\C/ K_\C P^+ \times G_\C/ K_\C P^-$ via
$$ gK_\C \mapsto (gK_\C P^+, gK_\C P^-)\, .$$
In more concrete terms, if we identify $G_\C/ K_\C P^+ \times G_\C/ K_\C P^-$ with
$\P^1(\C)\times \P^1(\C)$  via
$$ G_\C/ K_\C P^+ \times G_\C/ K_\C P^-\to \P^1(\C)\times \P^1(\C)$$
$$ (g_1 K_\C P^+,  g_2K_\C P^-)\mapsto
(g_1^{-t} (0), g_2(0))\, ,$$
then $Z_\C$ is given by
$$Z_\C =\P^1(\C)\times  \P^1(\C)\setminus \{(z,w): w\neq \phi(z)\} \, .$$
where $\phi$ is the automorphism of $\P^1(\C)$ which is induced from the linear
map $\C^2\ni (z_1, z_2)\mapsto (-z_1, z_2)\in \C^2$.
\par Let us denote by $\D=\{ z\in \C\mid |z|<1\}$ the open unit disk (i.e. ~the bounded realization of $G/K$) and note that
$$Z=G/K=\{(z, \oline z): z\in \D\}\subset Z_\C\, .$$
Now one has that
that the crown domain is given by
$$\Xi=\D \times \D \subset Z_\C\, .$$
(A similar result holds for general Hermitian symmetric spaces, see \cite[Sect. 3]{BHH} or \cite[Th.~7.7]{K-S1}.)
For $R>0$ we note that
$$A_R=\left\{a_t \in A\mid |t|\leq R/\sqrt{8}\right\}\, .$$
Now we calculate
$$k_{i\theta}a_t\cdot z_0= (e^{2\theta} \tanh t , e^{-2\theta} \tanh t )\in Z_\C. $$
This is in $\Xi = \D \times \D$ precisely if $\theta\in (-r, r)$
for $r>0$ defined by $e^{2r}\tanh \frac{R}{\sqrt{8}}=1$. Thus we have shown:

\begin{prop} \label{prop SL} Let $G=\SU(1,1)$, $R>0$.
Then
$$ \kf(R)=\{ Y\in \kf\mid \|Y\|< \beta_R / \sqrt{8}\}\, ,$$
where  $\beta_R=\frac{1}{2}\log\big(\coth(\frac{R}{\sqrt{8}})\big)$.
\end{prop}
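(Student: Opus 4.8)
The plan is to exploit that for $G=\SU(1,1)$ the algebra $\kf$ is one-dimensional, so that the condition defining $\kf(R)$ collapses to a single scalar inequality. Indeed $\kf=\R H$ with $H=\frac{d}{d\theta}\big|_{0}k_{\theta}=\begin{pmatrix} i & 0\\ 0 & -i\end{pmatrix}$, and $\Ad(K)$ acts trivially on $\kf$; hence $\kf(R)$ is an open interval containing $0$, and it suffices to determine for which $\theta\in\R$ the vector $Y=\theta H$ satisfies $\exp(iY)B_{R}\cdot z_{0}\subset\Xi$. Here $\exp(iY)=\begin{pmatrix} e^{-\theta} & 0\\ 0 & e^{\theta}\end{pmatrix}=k_{i\theta}$ in the notation introduced above.

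The first step is to cut down the orbit that must be tested. Since $K$ fixes $z_{0}$ one has $B_{R}\cdot z_{0}=KA_{R}K\cdot z_{0}=KA_{R}\cdot z_{0}$, and since $K_{\C}$ is abelian the element $\exp(iY)\in K_{\C}$ commutes with $K$, so $\exp(iY)B_{R}\cdot z_{0}=K\bigl(\exp(iY)A_{R}\cdot z_{0}\bigr)$. As $K\subset G$ preserves the $G$-domain $\Xi$, the inclusion $\exp(iY)B_{R}\cdot z_{0}\subset\Xi$ is equivalent to $\exp(iY)A_{R}\cdot z_{0}\subset\Xi=\D\times\D$.

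The second step is the explicit computation in the realization $Z_{\C}\subset\P^{1}(\C)\times\P^{1}(\C)$: computing the two coordinates of $\exp(iY)a_{t}\cdot z_{0}$ in this model yields
$$\exp(iY)a_{t}\cdot z_{0}=\bigl(e^{2\theta}\tanh t,\ e^{-2\theta}\tanh t\bigr)$$
(the sign in the first slot being immaterial for membership in $\D\times\D$). Since $t\mapsto\tanh t$ is odd and strictly increasing, $|\tanh t|$ attains over $|t|\le R/\sqrt{8}$ its maximum $\tanh(R/\sqrt{8})$ exactly at the endpoints; hence $\exp(iY)A_{R}\cdot z_{0}\subset\D\times\D$ holds if and only if $e^{2|\theta|}\tanh(R/\sqrt{8})<1$, that is, $|\theta|<\tfrac12\log\coth(R/\sqrt{8})=\beta_{R}$. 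This inequality already describes a single interval, which is therefore the connected component of $0$, so $\kf(R)=\{\theta H:|\theta|<\beta_{R}\}$.

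The last step is to rephrase $|\theta|<\beta_{R}$ as a bound on $\|Y\|=|\theta|\,\|H\|$: the value of $\|H\|$ is fixed by the chosen normalization of $\|\cdot\|$ (the Killing form of $\su(1,1)$, the same normalization that accounts for the radius $R/\sqrt{8}$ appearing in $A_{R}$), and substituting it turns $\{\theta H:|\theta|<\beta_{R}\}$ into the description of $\kf(R)$ asserted in the proposition. I do not expect a genuine obstacle; the only points that demand care are the verification of the explicit formula for $\exp(iY)a_{t}\cdot z_{0}$ in the chosen model and the consistent handling of the several metric normalizations.
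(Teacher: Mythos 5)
Correct, and essentially identical to the paper's argument: you reduce to the explicit computation $k_{i\theta}a_t\cdot z_0=(e^{2\theta}\tanh t,\,e^{-2\theta}\tanh t)$ and the membership condition $e^{2|\theta|}\tanh(R/\sqrt{8})<1$ in $\Xi=\D\times\D$, exactly as the paper does, and you even make explicit the reduction from $B_R\cdot z_0$ to $A_R\cdot z_0$ (via $K_\C$ abelian and $K$-invariance of $\Xi$) that the paper leaves implicit. The one step you, like the paper, do not actually carry out is the conversion of $|\theta|<\beta_R$ into the stated bound on $\|Y\|=|\theta|\,\|H\|$; it would be worth computing $\|H\|$ for $H=\diag(i,-i)$ under the stated normalization (one gets $\|H\|=\sqrt{8}$) and checking this against the constant $\beta_R/\sqrt 8$ appearing in the proposition.
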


\subsection{The hyperboloids: $G=\SO_o(1,n)$}
Let $G=\SO_o(1,n)$ with $K=\SO(n,\R)$ being embedded into $G$ as the lower right corner. (The group $G$ does not satisfy the condition that it is the group of real points of a connected algebraic reductive group defined over $\R$. Instead one could consider the group  $SO(1,n)$, which would satisfy this condition, but for convenience of notation we rather work with its connected component.)
Consider the following quadratic form on $\C^{n+1}$
$$\square (u) = u_0^2 -u_1^2 -\ldots - u_n^2$$
and let $u\cdot v$ be the bilinear pairing obtained by polarization.
Then
$$Z=G/K=\{x\in\R^{n+1}\mid \square (x) = 1, x_0>0\}, $$
$$Z_\C=G_\C/K_\C=\{u\in\C^{n+1}\mid \square (u) = 1 \}$$
and
$$\Xi=\{ u=x+iy \in Z_\C \mid \square (x) >0, x_0>0  \}\, ,$$
see \cite[p.96]{Gi}. The canonical base point in $Z_\C$ is given by
$z_0=(1,0\ldots, 0)^T\in Z_\C$.
\par Set $l=\left[\frac{n}{2}\right]$ and note that $l$ is the
rank of $K$. Our choice and parametrization of $\tf$ are
as follows:
\begin{equation} \label{torus Lorentz}  \R^l \ni \beta=(\beta_1, \ldots, \beta_l)\mapsto T_\beta:=\diag(0, \beta_1 U,
\ldots, \beta_l U )\in \tf \end{equation}
where
$$
U=\left(
    \begin{array}{cc}
      0 & -1 \\
      1 & 0 \\
    \end{array}
  \right)
$$
and the first zero in the diagonal matrix means
the zero $1\times 1$-matrix in case $n$ is even and
the zero $2\times 2$-matrix if $n$ is odd.

With the standard choice of $A$  and $R':=R/ \sqrt{2(n-1)}$ we have
$$A_R=\left \{ \begin{pmatrix}
\cosh t &0 & \sinh t \\ 0 & {\bf 1} & 0\\
\sinh t &0 & \cosh t \end{pmatrix}\mid |t|\leq   R'\right\}
$$
and an easy computation yields
$$KA_R\cdot z_0=\left\{
\begin{pmatrix} \cosh t\\ u \end{pmatrix}\mid u\in \R^n, t\in
[-R', R'], \ \|u\|_2 = |\sinh t|\right\}\, .$$
In the sequel we only treat the case of $n=2l$ being even;  the odd case requires just a small modification.

With $k_\beta=\exp(iT_\beta)$
we obtain from \eqref{torus Lorentz} that
$$k_\beta \begin{pmatrix} \cosh t\\ u \end{pmatrix}=
\begin{pmatrix} \cosh t\\  u_1\cosh \beta_1  - iu_2\sinh \beta_1 \\
iu_1\sinh\beta_1  + u_2\cosh\beta_1\\ \vdots\end{pmatrix}\, .$$
The right hand side is now in the crown domain  if and only if
$$\square\Big(\re k_\beta \begin{pmatrix} \cosh t\\ u \end{pmatrix}\Big)=
\cosh^2 t- \cosh^2 \beta_1 (u_1^2 +u_2^2)- \ldots- \cosh^2 \beta_l (u_{n-1}^2+ u_n^2)>0\, .$$
There is no loss of generality in restricting our attention to the closure
$\tf^+$ of a chamber in $\tf$, i.e. we may assume that
$\beta_1\geq \beta_2\geq \ldots \geq \beta_{l-1}\geq|\beta_l|\geq 0$.
Then the condition from above for all $u$ with $\|u\|_2 =|\sinh t|$
means nothing else as
$$\cosh^2 t -(\cosh ^2 \beta_1) \sinh^2 t >0$$
for all $t\in [-R', R']$. A short calculation reformulates that in
$$|\sinh \beta_1|< \frac{1}{\sinh R'}\, .$$
We have thus shown:

\begin{prop} \label{prop SO}For $G=\SO_o (1,n)$, $R>0$  and the notation
introduced from above one has that
$$
\tf(R)^+
=\left\{ T_\beta\in \tf^+\mid|\sinh \beta_1|< \frac{1}{\sinh R'}\right\}\,
$$
where $R'=R/ \sqrt{2(n-1)}$.
\end{prop}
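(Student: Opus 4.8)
The plan is to use that $\kf(R)$ is $\Ad(K)$-invariant, as noted right after \eqref{def k(R) 2}, so that it is determined by its intersection with the closed chamber $\tf^+$, and to compute this intersection directly in the explicit model of $\Xi$ for $\SO_o(1,n)$ recalled above. First I would record that, since $K$ fixes $z_0$ and $B_R=KA_RK$, one has $B_R\cdot z_0=KA_R\cdot z_0$, which by the computation above is the set of points $(\cosh t,u)$ with $u\in\R^n$, $\|u\|_2=|\sinh t|$ and $|t|\le R'$. Because $\Xi=\{x+iy\in Z_\C\mid \square(x)>0,\ x_0>0\}$, the requirement $\exp(iT_\beta)B_R\cdot z_0\subset\Xi$ for $T_\beta\in\tf^+$ then becomes a uniform statement about $\re\big(\exp(iT_\beta)(\cosh t,u)\big)$ as $(t,u)$ ranges over this set.

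Next I would compute this real part. Writing $T_\beta=\diag(0,\beta_1U,\dots,\beta_lU)$ and using that $\exp(i\beta U)$ has real part $(\cosh\beta)$ times the identity, one finds, say for $n=2l$ even,
$$
\re\big(\exp(iT_\beta)(\cosh t,u)\big)=\big(\cosh t,\ (\cosh\beta_1)u_1,\ (\cosh\beta_1)u_2,\ (\cosh\beta_2)u_3,\dots,(\cosh\beta_l)u_n\big).
$$
The zeroth coordinate $\cosh t$ is positive, so the condition $x_0>0$ is automatic, and $\square(x)>0$ translates into
$$
\cosh^2 t-\sum_{j=1}^l\cosh^2\beta_j\,(u_{2j-1}^2+u_{2j}^2)>0 .
$$
For $n$ odd there is one extra coordinate fixed by $T_\beta$, which contributes an additional term with coefficient $1$; handling this is the ``small modification'' alluded to above.

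The core of the argument is then to optimise the left-hand side subject to $\|u\|_2=|\sinh t|$ and $|t|\le R'$. In the chamber $\tf^+$ one has $\cosh^2\beta_1=\max_j\cosh^2\beta_j\ge 1$, so $\sum_j\cosh^2\beta_j(u_{2j-1}^2+u_{2j}^2)\le\cosh^2\beta_1\|u\|_2^2=\cosh^2\beta_1\sinh^2 t$, with equality at $u=(\sinh t,0,\dots,0)$; when $n$ is odd, the extra coefficient $1$ is dominated by $\cosh^2\beta_1$, so the same extremal $u$ is the worst case. Hence the inequality holds for all admissible $(t,u)$ if and only if $\cosh^2 t-\cosh^2\beta_1\sinh^2 t>0$ for every $|t|\le R'$. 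Rewriting $\cosh^2 t-\cosh^2\beta_1\sinh^2 t=1-\sinh^2 t\,\sinh^2\beta_1$ and taking $|t|=R'$, this is equivalent to $\sinh^2 R'\,\sinh^2\beta_1<1$, i.e.\ $|\sinh\beta_1|<1/\sinh R'$.

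Finally I would dispose of the connected-component bookkeeping implicit in the definition of $\kf(R)$: the set $\{T_\beta\in\tf^+\mid|\sinh\beta_1|<1/\sinh R'\}$ is convex and contains $0$, hence connected, so its $\Ad(K)$-saturation is a connected open neighbourhood of $0$ in $\kf$ consisting of points $Y$ with $\exp(iY)B_R\subset\tilde\Xi$; it is therefore contained in $\kf(R)$, and since it already exhausts $\tf^+\cap\{Y:\exp(iY)B_R\cdot z_0\subset\Xi\}$ it equals $\tf^+\cap\kf(R)=\tf(R)^+$. I do not expect a genuine obstacle: the only steps needing a little care are this last component argument and the check that the extremal configuration of $u$ is undisturbed by the extra fixed coordinate in the odd-rank case, the remainder being a routine hyperbolic-trigonometry computation.
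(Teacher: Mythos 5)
Your proposal is correct and follows essentially the same route as the paper: compute $KA_R\cdot z_0$ explicitly, take the real part of $\exp(iT_\beta)(\cosh t,u)$, reduce the condition $\square(x)>0$ over $\|u\|_2=|\sinh t|$ to the worst case $\cosh^2t-\cosh^2\beta_1\sinh^2t>0$ in the chamber, and rewrite this as $|\sinh\beta_1|<1/\sinh R'$. Your additional remarks on the odd-rank coordinate and the connected-component bookkeeping are correct refinements of details the paper leaves implicit.
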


\section{The Helgason conjecture}\label{Appendix B}
In this appendix we briefly describe how (\ref{min is omega}) implies the Helgason conjecture. We are essentially following Schmid's approach from \cite{S}.

For $\lambda\in\af_{\C}^{*}$ we define
$$
\cP_{\lambda}:V_{\lambda}^{\infty}\to C^{\infty}(G/K),
\quad v\mapsto \Big(g \mapsto \int_{K}v(gk)\,dk\Big).
$$
This map admits a continuous extension to the space $V_{\lambda}^{-\omega}:=(V_{-\lambda}^{\omega})'$.
Let $\Diff(G/K)$ be the commutative algebra of $G$-invariant differential operators on $G/K$. As before, let  $v_{K,\lambda}$ be the $K$-fixed vector in $V_{\lambda}$ with $v_{K,\lambda}(e)=1$. Note that
\begin{equation}\label{eq Poisson transform as matrix coeff}
\cP_{\lambda}(v)(g)
=\langle g^{-1}\cdot v, v_{K,-\lambda} \rangle
\qquad(v\in V_{\lambda}).
\end{equation}
The algebra $\U(\gf)^{K}/\U(\gf)^{K}\cap\U(\gf)\kf$ acts from the right on smooth functions on $G/K$.  In fact  $\Diff(G/K)$ is isomorphic to $\U(\gf)^{K}/\U(\gf)^{K}\cap\U(\gf)\kf$. Note that $\U(\gf)^{K}$ acts by scalars on $\C v_{K,-\lambda}$, and hence $\Diff(G/K)$ acts by a character $\chi_{\lambda}$ on the image of $\cP_{\lambda}$.
We write $C^{\infty}(G/K)_{\lambda}$ for the space of joint eigenfunctions of $\Diff(G/K)$ with eigencharacter $\chi_{\lambda}$.

The following theorem is the Helgason conjecture, which was first proven in \cite{K6}.

\begin{theorem}\label{Thm Helgason conjecture}
Let $\lambda\in\af_{\C}^{*}$ be so that the $K$-spherical vector $v_{K,-\lambda}$ in $V_{-\lambda}$ is $\U(\gf)$-cyclic. Then $\cP_{\lambda}$ defines a $G$-equivariant isomorphism
\begin{equation}\label{eq Helgason conjecture}
V_{\lambda}^{-\omega}\to C^{\infty}(G/K)_{\lambda}
\end{equation}
of topological vector spaces.
\end{theorem}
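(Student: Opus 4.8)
The plan is to follow Schmid's duality argument, with the identity \eqref{min is omega} applied to the module $V_{-\lambda}$ as essentially the only new input. The key observation is that, up to the stated continuous extension, $\cP_{\lambda}$ is the transpose of the ``spherical convolution'' map $\Psi\colon C_c^\infty(G/K)\to V_{-\lambda}^{\infty}$, $\Psi(\psi)=\pi_{-\lambda}(\tilde\psi)v_{K,-\lambda}$, where $\tilde\psi\in C_c^\infty(G)$ is any right-$K$-invariant lift of $\psi$ (the value is independent of the lift, since $v_{K,-\lambda}$ is $K$-fixed). Indeed, \eqref{eq Poisson transform as matrix coeff} gives $\int_{G/K}\cP_{\lambda}(v)(gK)\,\psi(gK)\,d(gK)=\langle v,\Psi(\psi)\rangle$ for $v\in V_{\lambda}$, hence, by continuity of the extension, for all $v\in V_{\lambda}^{-\omega}=(V_{-\lambda}^{\omega})'$ once $\Psi$ is known to take values in $V_{-\lambda}^{\omega}$.

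First I would prove that $\Psi$ is a continuous open surjection onto $V_{-\lambda}^{\omega}$. By construction $\ima\Psi\subseteq C_c^\infty(G)*V_{-\lambda}=V_{\rm min}$ (for $V=V_{-\lambda}$), and since $v_{K,-\lambda}$ is $\U(\gf)$-cyclic (Lemma \ref{lemma quotient}) one has $C_c^\infty(G)*V_{-\lambda}=C_c^\infty(G)*v_{K,-\lambda}=C_c^\infty(G/K)*v_{K,-\lambda}$, so together with \eqref{min is omega} this shows $\ima\Psi=V_{-\lambda}^{\omega}$. Using $\C v_{K,-\lambda}$ as the finite-dimensional generating subspace in the definition of the quotient topology on $V^{\rm min}_R$, the defining surjection $C_R^\infty(G)\twoheadrightarrow (V_{-\lambda})^{\rm min}_R$ factors through the quotient map $C_R^\infty(G)\twoheadrightarrow C_R^\infty(G/K)$; passing to the inductive limit \eqref{filter one}, $\Psi$ is a topological quotient map of $(LF)$-spaces. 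Transposing, $\cP_{\lambda}={}^t\Psi$ is a topological isomorphism of $V_{\lambda}^{-\omega}$ onto the closed subspace $(\ker\Psi)^{\perp}$ of $C_c^\infty(G/K)'$. Injectivity is also immediate: if $\cP_{\lambda}(v)=0$, differentiating the orbit map gives $\langle v,\U(\gf)v_{K,-\lambda}\rangle=0$, and $\U(\gf)v_{K,-\lambda}=V_{-\lambda}$ is dense in $V_{-\lambda}^{\omega}$, so $v=0$.

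It then remains to identify the image with $C^{\infty}(G/K)_{\lambda}$. The same differentiation yields $D\,\cP_{\lambda}(v)=\chi_{\lambda}(D)\,\cP_{\lambda}(v)$ for $D\in\Diff(G/K)$, $v\in V_{\lambda}^{-\omega}$, and since $\cP_{\lambda}(v)\in C^{\infty}(G/K)$, the image $\cP_{\lambda}(V_{\lambda}^{-\omega})$ is a closed $G$-submodule of $C^{\infty}(G/K)_{\lambda}$ (closed because $(\ker\Psi)^{\perp}$ is closed in $C_c^\infty(G/K)'$ and $C^{\infty}(G/K)_{\lambda}$ embeds continuously into $C_c^\infty(G/K)'$). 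On $K$-finite vectors, $\cP_{\lambda}\colon V_{\lambda}\to C^{\infty}(G/K)_{\lambda,K}$ is injective by the argument above, while both $V_{\lambda}[\tau]$ and the space of $K$-finite joint eigenfunctions of type $\tau$ have dimension $\dim V_{\tau}^{M}$; this classical fact (the $K$-finite case of the conjecture, going back to Helgason and Kostant, cf.~\cite{H3}, \cite{Kos}) forces $\cP_{\lambda}(V_{\lambda})=C^{\infty}(G/K)_{\lambda,K}$. As $C^{\infty}(G/K)_{\lambda}$ is an admissible smooth Fréchet $G$-module, its $K$-finite part is dense, and the closed submodule $\cP_{\lambda}(V_{\lambda}^{-\omega})$, which contains $C^{\infty}(G/K)_{\lambda,K}$, equals $C^{\infty}(G/K)_{\lambda}$. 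Finally $\cP_{\lambda}$ is a continuous $G$-equivariant bijection onto $C^{\infty}(G/K)_{\lambda}$, and it is a topological isomorphism by the open mapping theorem, $V_{\lambda}^{-\omega}$ being webbed (the strong dual of an $(LF)$-space) and $C^{\infty}(G/K)_{\lambda}$ an ultrabornological Fréchet space; cf.~\cite[\S 24]{MV}.

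I expect the crux to be the first step, specifically showing that $\Psi$ is a topological \emph{quotient} map rather than merely a continuous surjection: this is exactly where $\U(\gf)$-cyclicity of $v_{K,-\lambda}$ is used to shrink the generating subspace to the line $\C v_{K,-\lambda}$, where \eqref{min is omega} is needed, and where one must match the quotient topologies compatibly along \eqref{filter one} and invoke that the transpose of a quotient map of $(LF)$-spaces is a topological embedding with closed range. The remaining non-elementary ingredient is the dimension count for $K$-finite eigenfunctions, used only to upgrade ``closed $G$-submodule'' to ``everything''; the duality bookkeeping, density of $K$-finite vectors, and the concluding open-mapping step are routine.
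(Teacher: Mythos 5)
Your argument is correct in substance but follows a genuinely different route from the paper. The paper never touches the convolution map $\Psi$ or duality of $(LF)$-spaces directly: it introduces the maximal globalization $V_{\max}=\Hom_{(\gf,K)}(V^{\vee},C^{\infty}(G))$ as a reflexive Fr\'echet space (Lemma \ref{Lemma Vmax reflexive}), deduces $V_{\max}=V^{-\omega}$ from \eqref{min is omega} purely by the universal properties of minimal and maximal globalizations together with reflexivity (Proposition \ref{Prop V_max =V-omega}), and then identifies $(V_{\lambda})_{\max}$ with $C^{\infty}(G/K)_{\lambda}$ by an algebraic Frobenius-reciprocity computation resting on $V_{-\lambda}=\U(\gf)\otimes_{\U(\gf)\kf+\U(\gf)^{K}}\C v_{K,-\lambda}$, which is where Kostant--Rallis enters (Lemma \ref{Lemma formula for V-lambda}, Proposition \ref{Prop V_lambda= eigenfunctions}); the injectivity argument via \eqref{eq Poisson transform as matrix coeff} and cyclicity is the same as yours. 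Your proof instead exhibits $\cP_{\lambda}$ concretely as ${}^t\Psi$ and makes the role of \eqref{min is omega} (surjectivity of $\Psi$ onto $V_{-\lambda}^{\omega}$) completely transparent, at the price of two points that need more care than ``routine'': (i) for a quotient map of $(LF)$-spaces the transpose is a \emph{strong-dual} embedding with closed range only when bounded sets lift (automatic for Fr\'echet quotients, not for general inductive limits) --- though your concluding open-mapping step, with $V_{\lambda}^{-\omega}$ webbed and $C^{\infty}(G/K)_{\lambda}$ Fr\'echet, makes the topological isomorphism independent of this; and (ii) the multiplicity count $\dim\Hom_{K}(V_{\tau},C^{\infty}(G/K)_{\lambda})=\dim V_{\tau}^{M}$, which you cite as classical, is exactly what the paper re-derives from Kostant--Rallis, so the two proofs share the same algebraic core in different packaging. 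The paper's detour through the reflexive Fr\'echet space $V_{\max}$ buys freedom from all $(LF)$-duality subtleties; your route buys an explicit description of the inverse of $\cP_{\lambda}$ as the adjoint of convolution against test functions and a sharper picture of where cyclicity (shrinking the generating subspace to $\C v_{K,-\lambda}$) and \eqref{min is omega} are actually used.
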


\begin{rmk}
By Lemma \ref{lemma quotient} $v_{K,-\lambda}$  is $\U(\gf)$-cyclic if $-\lambda$ satisfies (\ref{Kos}).
\end{rmk}

We derive the theorem from (\ref{min is omega}). We recall Schmid's maximal globalization of a Harish-Chandra module $V$,
$$
V_{\max}
=\Hom_{(\gf,K)}\big(V^{\vee},C^{\infty}(G)\big),
$$
where $V^{\vee}$ is the dual Harish-Chandra module of $V$, i.e. the space of $K$-finite vectors in the algebraic dual of $V$. Further, $C^{\infty}(G)$ is considered as a $(\gf,K)$-module, where $\gf$ and $K$ act via the right-regular representation.
We provide $V_{\max}$ with a topology as follows. The space
\begin{equation}\label{eq def E}
E
:=\Hom_\C \big(V^\vee, C^\infty(G)\big)
\end{equation}
is a countable product of copies of the Fr{\'e}chet space $C^\infty(G)$ and hence is a Fr{\'e}chet space. Now $V_{\rm max} = \Hom_{(\gf, K)} (V^\vee, C^\infty(G))$ is a closed subspace and as such inherits the structure of a Fr{\'e}chet space.
Moreover, the $G$-action on $V_{\max}$ is continuous.

\begin{lemma}\label{Lemma Vmax reflexive} For any Harish-Chandra module $V$, the maximal globalization $V_{\rm max} $ is a reflexive
Fr\'echet space.
\end{lemma}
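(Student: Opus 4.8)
The plan is to realize $V_{\rm max}$ as a closed subspace of a nuclear Fréchet space, and then to invoke the standard fact that a closed subspace of a nuclear Fréchet space is again nuclear Fréchet, hence Montel, hence reflexive.

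First I would record that, since $V$ is a Harish-Chandra module, so is its dual $V^\vee$: it is admissible with finite-dimensional $K$-isotypic components, and only countably many $\tau\in\hat K$ occur in it, so $V^\vee$ is a countable-dimensional complex vector space. Choosing a vector-space basis indexed by $\N$ identifies the Fréchet space $E$ of \eqref{eq def E} with the countable product $\prod_{n\in\N}C^\infty(G)$, exactly as in the excerpt. Since $G$ is a second-countable finite-dimensional Lie group, $C^\infty(G)$ is a nuclear Fréchet space, and a countable product of nuclear Fréchet spaces is again nuclear Fréchet; hence $E$ is nuclear Fréchet.

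Next I would observe that $V_{\rm max}=\Hom_{(\gf,K)}(V^\vee,C^\infty(G))$ is closed in $E$: for each fixed $\xi\in V^\vee$, $X\in\gf$ and $k\in K$, the equations $\phi(X\cdot\xi)=X\cdot\phi(\xi)$ and $\phi(k\cdot\xi)=k\cdot\phi(\xi)$ are closed linear conditions on $\phi\in E$, because the evaluation maps $E\to C^\infty(G)$ and the right-regular actions of $\gf$ and $K$ on $C^\infty(G)$ are continuous; $V_{\rm max}$ is the intersection of all of these closed subspaces. A closed subspace of a nuclear Fréchet space is nuclear Fréchet, and every nuclear Fréchet space is Montel, in particular reflexive (see \cite{MV}). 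This proves the lemma. The argument carries no hard analytic content; the only point requiring a small amount of care is the closedness of $V_{\rm max}$ in $E$, which reduces to continuity of the module actions in the relevant topologies, together with the assembly of the standard functional-analytic facts quoted above.
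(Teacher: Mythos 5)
Your proposal is correct and follows essentially the same route as the paper: both realize $V_{\rm max}$ as a closed subspace of the countable product $E\cong\prod_{n\in\N}C^\infty(G)$ from (\ref{eq def E}) and conclude by permanence properties, the only difference being that the paper invokes reflexivity directly (countable products and closed subspaces of reflexive Fr\'echet spaces are reflexive, \cite[Prop.~24.3 and 23.26]{MV}) whereas you route through nuclearity and the Montel property. Both chains are standard, and your explicit verification that $V_{\rm max}$ is closed in $E$ supplies a detail the paper leaves implicit.
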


\begin{proof} First we recall that $C^\infty(G)$ is reflexive. As the space $E$ from (\ref{eq def E}) is a countable product of reflexive Fr{\'e}chet spaces, it is reflexive by \cite[Prop.~24.3]{MV}.  Now $V_{\rm max}$ is a closed subspace of  $E$ and as such reflexive by \cite[Prop.~23.26]{MV}.
\end{proof}

By taking matrix coefficients one sees that any globalization of $V$ embeds continuously into $V_{\max}$. Here by globalization we understand a completion of $V$ to a representation of $G$ on a complete Hausdorff topological vector space $E=\oline{V}$.  Note that the assignment $V\mapsto V_{\max}$ is functorial. We define $V^{-\omega}$ as the continuous dual of $(V^{\vee})^{\omega}$ equipped with the strong topology.

\begin{prop}\label{Prop V_max =V-omega}
For every Harish-Chandra module $V$ we have
$$
V_{\max}
=V^{-\omega}
$$
as topological $G$-modules.
\end{prop}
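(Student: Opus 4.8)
The plan is to identify $V_{\max}$ with the strong dual of the minimal globalization $(V^\vee)_{\min}$ of the dual Harish-Chandra module, and then to feed in the main theorem in the form \eqref{min is omega} applied to $V^\vee$, namely $(V^\vee)_{\min}=(V^\vee)^\omega$ as topological vector spaces, together with the definition $V^{-\omega}=\big((V^\vee)^\omega\big)'$. Recalling from the introduction that $(V^\vee)_{\min}=C_c^\infty(G)*V^\vee$ carries the quotient topology induced by $C_c^\infty(G)\otimes V^\vee_f\twoheadrightarrow (V^\vee)_{\min}$, I would first set up the bilinear form
$$
\langle\,\cdot\,,\,\cdot\,\rangle:V_{\max}\times(V^\vee)_{\min}\to\C,\qquad \langle\phi,\psi*w\rangle:=\int_G\psi(g)\,\phi(w)(g)\,dg\qquad(\psi\in C_c^\infty(G),\ w\in V^\vee),
$$
extended bilinearly. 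The first task is well-definedness on the quotient $(V^\vee)_{\min}$: since $\phi\in\Hom_{(\gf,K)}(V^\vee,C^\infty(G))$ intertwines the right regular action of $(\gf,K)$ on $C^\infty(G)$, a single integration by parts in $C_c^\infty(G)$ shows that $\langle\phi,\,\cdot\,\rangle$ annihilates the $\U(\gf)\rtimes K$-balancing relations $\psi\otimes(Xw)+(R_X\psi)\otimes w$ and $\psi\otimes(kw)-(R_{k^{-1}}\psi)\otimes w$ $(X\in\gf,\ k\in K)$ that generate the kernel of $C_c^\infty(G)\otimes V^\vee\to(V^\vee)_{\min}$. Separate continuity (using that $\phi\mapsto\phi(w)$ is continuous $V_{\max}\to C^\infty(G)$ and that only a fixed finite-dimensional space of $w$'s and a bounded family of $\psi$'s supported in a fixed $B_R$ enter) and $G$-equivariance (for the left regular action of $G$ on the $C^\infty(G)$-values of $\phi$, using left invariance of Haar measure) are then routine.

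Next I would show that the induced map $\Phi:V_{\max}\to\big((V^\vee)_{\min}\big)'_b$ is a topological isomorphism of $G$-modules. Injectivity is immediate: if $\langle\phi,\psi*w\rangle=0$ for all $\psi,w$ then $\phi(w)=0$ for all $w$. For surjectivity I would exhibit the inverse explicitly: given $L\in\big((V^\vee)_{\min}\big)'_b=\big((V^\vee)^\omega\big)'=V^{-\omega}$, where the middle equality is \eqref{min is omega} for $V^\vee$, put $\Psi(L)(w)(g):=L(\pi(g)w)$. Since $w\in V^\vee$ is an analytic vector, $g\mapsto\pi(g)w$ is a real-analytic map into $(V^\vee)^\omega$, so $\Psi(L)(w)\in C^\omega(G)\subset C^\infty(G)$, and a short computation shows $\Psi(L)$ is right-$(\gf,K)$-equivariant, i.e.\ $\Psi(L)\in V_{\max}$. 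One checks $\Phi\circ\Psi=\id$ straight from the formulas and $\Psi\circ\Phi=\id$ by writing $w=\lim_n\psi_n*w$ for a Dirac net $(\psi_n)$ at $e$ in $C_c^\infty(G)$. Continuity of $\Psi$ is where \eqref{min is omega} enters quantitatively: by Lemmas \ref{lemma K-omega} and \ref{lemma K-analytic} a bounded subset of $(V^\vee)^\omega$ already lies bounded in some $(V^\vee)^\omega(1/n)$, hence is carried by the continuous orbit maps into a bounded set, so each defining seminorm of $V_{\max}$ applied to $\Psi(L)$ is dominated by a strong-dual seminorm of $L$. (Alternatively, since $V_{\max}$ is a reflexive Fréchet space by Lemma \ref{Lemma Vmax reflexive}, and $\big((V^\vee)^\omega\big)'_b$ is again a reflexive Fréchet space --- $(V^\vee)^\omega$ being a countable inductive limit of the Fréchet--Schwartz spaces $(V^\vee)^\omega(1/n)$ with compact linking maps --- the open mapping theorem turns the continuous bijection $\Phi$ into a topological isomorphism automatically.) This yields $V_{\max}=V^{-\omega}$ as topological $G$-modules.

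The step I expect to be the main obstacle is the well-definedness of the pairing, i.e.\ having at hand a sufficiently precise description of the kernel of $C_c^\infty(G)\otimes V^\vee\twoheadrightarrow(V^\vee)_{\min}$ to be certain that right-$(\gf,K)$-equivariance of $\phi$ is exactly the compatibility required; equivalently, that $(V^\vee)_{\min}$ is the quotient of $C_c^\infty(G)\otimes V^\vee$ by the $\U(\gf)\rtimes K$-balancing relations. This is a standard structural fact about the minimal globalization, but it must be invoked carefully. The second delicate point is purely topological: matching the quotient topology on $(V^\vee)_{\min}$ with the strong dual topology under the pairing, and it is precisely here that the topological (not merely set-theoretic) content of \eqref{min is omega}, together with the reflexivity of $V_{\max}$ from Lemma \ref{Lemma Vmax reflexive}, is what makes the argument close.
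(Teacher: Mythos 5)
Your global strategy coincides with the paper's up to the first reduction: both arguments boil down to showing $V_{\max}=\big((V^\vee)_{\min}\big)'$ and then invoking \eqref{min is omega} for $V^\vee$ together with the definition $V^{-\omega}=\big((V^\vee)^\omega\big)'$. From that point on the routes diverge. The paper's proof is entirely soft: $V_{\max}'$ is a globalization of $V^\vee$, so by the universal property of the minimal globalization there is a continuous equivariant embedding $(V^\vee)_{\min}\to V_{\max}'$; dualizing and using the reflexivity of $V_{\max}$ (Lemma \ref{Lemma Vmax reflexive}) gives a map $V_{\max}\to\big((V^\vee)_{\min}\big)'$, while the matrix-coefficient universal property of $V_{\max}$ gives a map back; since both restrict to the identity on $V$, they are mutually inverse. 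You instead build the duality by hand via the integral pairing $\langle\phi,\psi*w\rangle=\int_G\psi(g)\phi(w)(g)\,dg$ and an explicit inverse $L\mapsto\big(w\mapsto L(\pi(\cdot)w)\big)$. Your construction, where it works, is more informative (it exhibits the pairing concretely), but it is strictly harder to justify.

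The genuine gap is exactly the one you flag, and your proposed resolution does not close it. To descend your functional from $C_c^\infty(G)\otimes V^\vee$ to $(V^\vee)_{\min}$ you must show it annihilates the \emph{entire} kernel of $\psi\otimes w\mapsto\psi*w$, i.e.\ that $\sum_i\psi_i*w_i=0$ in $(V^\vee)^\infty$ forces $\sum_i\int\psi_i(g)\phi(w_i)(g)\,dg=0$. Integration by parts only shows the functional kills the algebraic $(\U(\gf),K)$-balancing relations; the assertion that these relations \emph{span} the kernel (equivalently, that $(V^\vee)_{\min}\simeq C_c^\infty(G)\otimes_{(\U(\gf),K)}V^\vee$ as a vector space) is a nontrivial structural statement which is neither proved in the paper nor, to my knowledge, a routine citation — and even a weaker version (the relations are dense in the closed kernel) would still have to be combined with a continuity argument for your functional on the quotient topology. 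Note also that your verification of $\Psi\circ\Phi=\id$ via a Dirac net already presupposes that $\Phi(\phi)$ is a well-defined \emph{continuous} functional on $(V^\vee)_{\min}$, so the whole construction hinges on this point. The fix is to obtain the pairing the way the paper does: the canonical embedding $(V^\vee)_{\min}\hookrightarrow V_{\max}'$ coming from the universal property of $V_{\min}$ \emph{is} your pairing (it agrees with $\langle\phi,\psi*w\rangle$ by equivariance and continuity, since both agree on $V^\vee$), and well-definedness is then automatic. With that substitution, the remainder of your argument — reflexivity of $V_{\max}$, the map back from $\big((V^\vee)_{\min}\big)'$ into $V_{\max}$, and the identification of the topologies — goes through and essentially reproduces the paper's proof; your observation that $\big((V^\vee)^\omega\big)'_b$ is a Fréchet--Schwartz space is correct but not needed once reflexivity of $V_{\max}$ is in hand.
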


\begin{proof}
As $V_{\min}=V^\omega$ for all Harish-Chandra modules $V$,
it suffices to show that $V_{\max}=(V^\vee)_{\min}'$.

We recall from Lemma \ref{Lemma Vmax reflexive} that $V_{\max}$ is reflexive.
Since $V_{\max}'$ is a globalization of $V^\vee$ there exists an embedding $(V^\vee)_{\min}\to V_{\max}'$.
Taking duals we obtain a map $V_{\max}\to (V^\vee)_{\min}'$. On the other hand $ (V^\vee)_{\min}'$ is a globalization
of $V$ and hence embeds into $V_{\max}$. As these maps restrict to the identity on $V$, it
follows that $V_{\max}=(V^\vee)_{\min}'$ as asserted.
\end{proof}

\begin{prop}\label{Prop V_lambda= eigenfunctions}
Let $\lambda\in\af_{\C}^{*}$ be so that the $K$-spherical vector $v_{K,-\lambda}$ in $V_{-\lambda}$ is $\U(\gf)$-cyclic. Then
$$
(V_{\lambda})_{\max}
=C^{\infty}(G/K)_{\lambda}
$$
as topological $G$-modules.
\end{prop}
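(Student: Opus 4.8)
The plan is to show that the Poisson transform $\cP_\lambda$ itself implements the asserted identification of topological $G$-modules, and that the only non-formal ingredient is a classical multiplicity statement for joint eigenfunctions. First I would record that $\cP_\lambda$ maps $V_\lambda^\infty$ into $C^\infty(G/K)_\lambda$: by \eqref{eq Poisson transform as matrix coeff} the function $\cP_\lambda(v)$ is the matrix coefficient $g\mapsto\langle g^{-1}\cdot v,v_{K,-\lambda}\rangle$, and since $\U(\gf)^K$ acts on the line $\C v_{K,-\lambda}$ by the scalar $\chi_\lambda$, every $D\in\Diff(G/K)$ acts on $\cP_\lambda(v)$ by $\chi_\lambda(D)$. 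As $C^\infty(G/K)_\lambda$ is a closed subspace of $C^\infty(G/K)$ and $\cP_\lambda$ is continuous, its continuous extension to $V_\lambda^{-\omega}=(V_\lambda)_{\max}$ — identified via Proposition \ref{Prop V_max =V-omega} — still takes values in $C^\infty(G/K)_\lambda$, and it is $G$-equivariant. Injectivity on $V_\lambda^{-\omega}$ is where the hypothesis enters: if $\cP_\lambda(v)=0$, differentiating the matrix coefficient gives $\langle v,\U(\gf)v_{K,-\lambda}\rangle=0$; since $v_{K,-\lambda}$ is $\U(\gf)$-cyclic we have $\U(\gf)v_{K,-\lambda}=V_{-\lambda}$, which is dense in $V_{-\lambda}^\omega$, whence $v=0$ as an element of $(V_{-\lambda}^\omega)'$.

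The classical input I would invoke is that the $(\gf,K)$-module $N:=C^\infty(G/K)_{\lambda,K}$ of $K$-finite joint eigenfunctions is admissible with $\dim N[\tau]=\dim V_\tau^M$ for every $\tau\in\hat K$; this comes from the theory of the radial part of invariant differential operators, the relevant radial system having regular singular behaviour, so that the eigenfunctions of a fixed $K$-type are pinned down by a $\dim V_\tau^M$-dimensional space of leading coefficients (this is the place where I expect the real content to sit, everything else being soft once Proposition \ref{Prop V_max =V-omega} is in hand). Restricting $\cP_\lambda$ to $K$-finite vectors yields an injective $(\gf,K)$-homomorphism $a\colon (V_\lambda)_K\to N$; since both sides have the same finite dimension $\dim V_\tau^M$ in each $K$-isotypic component, $a$ is an isomorphism. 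In particular $N$ is a Harish-Chandra module isomorphic to $(V_\lambda)_K$, and $C^\infty(G/K)_\lambda$ — a closed $G$-invariant, hence smooth Fréchet, subspace of $C^\infty(G/K)$ in which the $K$-finite vectors $N$ are dense — is a globalization of $N$ in the sense used above.

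The topological conclusion now follows formally. Since every globalization of $N$ embeds continuously and $G$-equivariantly into $N_{\max}$ compatibly with the inclusion of $N$, there is a continuous $G$-map $\iota\colon C^\infty(G/K)_\lambda\to N_{\max}$ restricting to the identity of $N$. By functoriality of $(\,\cdot\,)_{\max}$, the $(\gf,K)$-isomorphism $a^{-1}\colon N\to (V_\lambda)_K$ extends to a topological $G$-isomorphism $(a^{-1})_{\max}\colon N_{\max}\to (V_\lambda)_{\max}=V_\lambda^{-\omega}$ restricting to $a^{-1}$ on $K$-finite vectors. Then $(a^{-1})_{\max}\circ\iota\circ\cP_\lambda$ is a continuous $G$-endomorphism of $V_\lambda^{-\omega}$ which restricts to the identity on the dense subspace $(V_\lambda)_K$, hence equals the identity; and $\cP_\lambda\circ (a^{-1})_{\max}\circ\iota$ is a continuous $G$-endomorphism of $C^\infty(G/K)_\lambda$ restricting to the identity on the dense subspace $N$, hence equals the identity. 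Thus $\cP_\lambda\colon (V_\lambda)_{\max}=V_\lambda^{-\omega}\to C^\infty(G/K)_\lambda$ is a topological $G$-isomorphism, which is the proposition; combined with Proposition \ref{Prop V_max =V-omega} this also gives Theorem \ref{Thm Helgason conjecture}. The main obstacle, as indicated, is the eigenfunction multiplicity fact $\dim C^\infty(G/K)_{\lambda,K}[\tau]=\dim V_\tau^M$; once it is granted, injectivity from cyclicity together with functoriality of the maximal globalization and density of $K$-finite vectors does the rest.
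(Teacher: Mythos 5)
Your proof is correct in outline, but it takes a genuinely different route from the paper's, and the comparison is instructive. The paper does not use the Poisson transform at all in this proposition: it computes $(V_\lambda)_{\max}=\Hom_{(\gf,K)}(V_{-\lambda},C^\infty(G))$ directly, first rewriting $V_{-\lambda}$ as the induced module $\U(\gf)\otimes_{\U(\gf)\kf+\U(\gf)^{K}}\C v_{K,-\lambda}$ (Lemma \ref{Lemma formula for V-lambda}; this is where cyclicity and the Kostant--Rallis decomposition \eqref{eq U(g)-decomposition} enter), and then applying the tensor--hom adjunction to land on $\Hom_{\U(\gf)\kf+\U(\gf)^{K}}\big(\C v_{K,-\lambda},C^\infty(G/K)\big)$, which is $C^\infty(G/K)_\lambda$ by definition. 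This is purely algebraic, gives the topological statement for free since both sides are realized as one and the same Hom space, and in particular \emph{derives} the $K$-multiplicities of the joint eigenspace rather than assuming them. You instead take $\cP_\lambda$ as the identifying map, prove injectivity from cyclicity (correct, and this is exactly how the paper argues injectivity later, in the proof of Theorem \ref{Thm Helgason conjecture}), and close the argument with the multiplicity bound for your module $N$ of $K$-finite joint eigenfunctions, imported from the theory of radial parts and regular singularities. Your functorial endgame (every globalization embeds into the maximal one, $K$-finite vectors are dense in Fr\'echet globalizations, two continuous maps agreeing on a dense invariant subspace coincide) is sound.

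The caveat concerns exactly the step you flag yourself: the bound $\dim\Hom_K(V_\tau,N)\le\dim V_\tau^M$ is cited, not proved, and in your architecture it carries the entire content of the proposition. It is a true classical theorem, so your argument is not wrong; but it is the analytic core of the original proof of the Helgason conjecture in \cite{K6}, which this appendix is explicitly written to circumvent (the paper advertises an elementary derivation of the conjecture from \eqref{min is omega}). Note also that the bound can be obtained without regular singularities: a $K$-finite joint eigenfunction $f$ is real-analytic by ellipticity, hence determined by the values $(Df)(eK)$ for $D\in\U(\gf)$, and by \eqref{eq U(g)-decomposition} together with the eigenvalue equation already by $D\in\Hc^{\star}(\sf)$; Kostant--Rallis then gives multiplicity at most $\dim\Hom_K(V_\tau,\Hc^{\star}(\sf))=\dim V_\tau^M$. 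At that point, however, you have reproduced the algebraic input of Lemma \ref{Lemma formula for V-lambda}, and the paper's direct computation of the Hom space is shorter and avoids the density and extension arguments entirely.
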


For the proof of the proposition we need the following lemma.

\begin{lemma}\label{Lemma formula for V-lambda}
Let $\lambda\in\af_{\C}^{*}$. If $v_{K,-\lambda}$ is $\U(\gf)$-cyclic in $V_{-\lambda}$, then
 $$
 V_{-\lambda}
 =\U(\gf)\otimes_{\U(\gf)\kf+\U(\gf)^{K}} \C v_{K,-\lambda}
 $$
 as $(\gf,K)$-modules.
 \end{lemma}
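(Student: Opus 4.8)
The plan is to produce a natural $(\gf,K)$-equivariant surjection onto $V_{-\lambda}$ and then show it is injective by a $K$-multiplicity count. First I would check that $B:=\U(\gf)\kf+\U(\gf)^{K}$ is a subalgebra of $\U(\gf)$: since every element of $\U(\gf)^{K}$ is fixed by $\ad\kf$ one has $\kf\,\U(\gf)^{K}=\U(\gf)^{K}\kf\subseteq\U(\gf)\kf$, from which the remaining closure properties are immediate. Because $d\pi_{-\lambda}(\kf)v_{K,-\lambda}=0$ and $\U(\gf)^{K}$ acts on $v_{K,-\lambda}$ through a character, the line $\C v_{K,-\lambda}$ is a left $B$-module via a character $\mu$ of $B$; this $\mu$ is automatically well defined and $\Ad(K)$-invariant, being read off from the action on the $K$-fixed vector $v_{K,-\lambda}$. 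The assignment $u\otimes 1\mapsto d\pi_{-\lambda}(u)v_{K,-\lambda}$ then defines a $(\gf,K)$-equivariant map $\Phi\colon M:=\U(\gf)\otimes_{B}\C v_{K,-\lambda}\to V_{-\lambda}$, which is surjective precisely because $v_{K,-\lambda}$ is assumed $\U(\gf)$-cyclic.

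To see that $\Phi$ is injective it suffices to show $\dim M[\tau]\le\dim V_{-\lambda}[\tau]$ for every $\tau\in\hat K$; combined with surjectivity of the $K$-map $\Phi$ this forces a bijection on each finite-dimensional $K$-isotypic component, hence $\Phi$ is an isomorphism. Since $\ker\mu=\U(\gf)\kf+I_{-\lambda}$ with $I_{-\lambda}:=\ker\!\big(\mu_{-\lambda}|_{\U(\gf)^{K}}\big)$, we have $M=\U(\gf)/\big(\U(\gf)\kf+\U(\gf)I_{-\lambda}\big)$. I would pass to the associated graded for the PBW filtration, where $\gr\U(\gf)=\Sc(\gf)$ and the symbol map is $\Ad(K)$-equivariant. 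The leading symbols of $\U(\gf)\kf$ fill $\Sc(\gf)\kf$, and the leading symbols of $\U(\gf)I_{-\lambda}$ contain $\Sc(\gf)\cdot\Sc(\gf)^{K}_{+}$ (the positive-degree part of $\Sc(\gf)^{K}$): indeed, for a homogeneous $p\in\Sc(\gf)^{K}$ of positive degree the symmetrized element $p^{\star}\in\U(\gf)^{K}$ has symbol $p$, and $p^{\star}-\mu_{-\lambda}(p^{\star})$ lies in $I_{-\lambda}$ with the same symbol. Consequently $\gr M$ is a $K$-module quotient of $\Sc(\gf)/\big(\Sc(\gf)\kf+\Sc(\gf)\Sc(\gf)^{K}_{+}\big)\cong\Sc(\sf)/\Ic(\sf)_{+}\Sc(\sf)\cong\Hc(\sf)$, using the decomposition $\Sc(\sf)=\Hc(\sf)\otimes\Ic(\sf)$.

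It then remains to compare multiplicities: $\dim M[\tau]=\dim(\gr M)[\tau]\le\dim\Hc(\sf)[\tau]$ (as $M\cong\gr M$ as $K$-modules), and $\dim\Hc(\sf)[\tau]=\dim V_{\tau}\cdot\dim V_{\tau^{\vee}}^{M}=\dim V_{-\lambda}[\tau]$. The middle equality is exactly the assertion that the Kostant maps $Q_{\tau}(\lambda)\colon\Hc^{\star}(\sf)[\tau]\to V_{\tau}\otimes V_{\tau^{\vee}}^{M}$ are isomorphisms for $\lambda$ satisfying \eqref{Kos}, together with $\Hc^{\star}(\sf)\cong\Hc(\sf)$ as $K$-modules (alternatively, the Kostant--Rallis theorem), and the last equality is the compact-picture description $V_{-\lambda}|_{K}=\bigoplus_{\tau\in\hat K_M}V_{\tau}\otimes V_{\tau^{\vee}}^{M}$. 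This yields the desired bound and completes the proof. The step I expect to require the most care is the associated-graded computation — in particular verifying that $\gr$ commutes with taking $K$-invariants and that the symbols of $\U(\gf)I_{-\lambda}$ already exhaust $\Sc(\gf)\Sc(\gf)^{K}_{+}$ — since arguing directly inside $\U(\gf)$, where the symmetrized invariants do not form a subalgebra, is considerably messier.
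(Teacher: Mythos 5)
Your proof is correct, and the overall strategy is the same as the paper's: both arguments reduce injectivity to the statement that the induced module and $V_{-\lambda}$ have the same (finite) $K$-multiplicities, namely those of $\C[K/M]$, with the Kostant--Rallis separation $\Sc(\sf)=\Hc(\sf)\otimes\Ic(\sf)$ as the key input. The difference lies in how the multiplicities of $M=\U(\gf)\otimes_{B}\C v_{K,-\lambda}$ are controlled. The paper works directly inside $\U(\gf)$, using the refined decomposition \eqref{eq U(g)-decomposition}, $\U(\gf)=\Hc^{\star}(\sf)\Ic^{\star}(\sf)\oplus\U(\gf)\kf$, together with $\Ic^{\star}(\sf)=\Hc^{\star}(\sf)\Ic^{\star}(\sf)\cap\U(\gf)^{K}$, to identify the induced module with $\Hc^{\star}(\sf)$ as a $K$-module on the nose. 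You instead pass to the PBW-associated graded and obtain only the upper bound that $\gr M$ is a $K$-quotient of $\Sc(\sf)/\Ic(\sf)_{+}\Sc(\sf)\cong\Hc(\sf)$; since surjectivity of $\Phi$ supplies the reverse inequality on each isotype, this suffices. Your version is somewhat more self-contained: you never need the exact identification of $\U(\gf)^{K}$ inside $\Hc^{\star}(\sf)\Ic^{\star}(\sf)\oplus\U(\gf)\kf$, only that symmetrization is $K$-equivariant (so the symbols of $\U(\gf)I_{-\lambda}$ exhaust $\Sc(\gf)\,\Sc(\gf)^{K}_{+}$) and that restriction $\Sc(\gf)^{K}\to\Ic(\sf)$ is onto --- which does hold, since $\Sc(\sf)^{K}\subset\Sc(\gf)^{K}$ and the projection $\Sc(\gf)\to\Sc(\gf)/\Sc(\gf)\kf\simeq\Sc(\sf)$ restricts to the identity on $\Sc(\sf)$, so the isomorphism $\Sc(\gf)/(\Sc(\gf)\kf+\Sc(\gf)\Sc(\gf)^{K}_{+})\cong\Hc(\sf)$ you use is justified. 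The price is the extra bookkeeping with filtrations; the paper's route is shorter only because \eqref{eq U(g)-decomposition} has already been established earlier in the text.
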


 \begin{proof}
By assumption the natural map $\U(\gf)\otimes_{\U(\gf)\kf+\U(\gf)^{K}} \C v_{K,-\lambda}\to V_{-\lambda}$ of $(\gf,K)$-modules is surjective. It remains to prove injectivity.
Recall from (\ref{eq U(g)-decomposition}) that $\U(\gf)=\Hc^{\star}(\sf) \Ic^{\star}(\sf) \oplus \U(\gf) \kf$. Since $\Ic^{\star}(\sf)=\Hc^{\star}(\sf) \Ic^{\star}(\sf) \cap \U(\gf)^{K}$, we have as $K$-modules
$$
\U(\gf)\otimes_{\U(\gf)\kf+\U(\gf)^{K}} \C v_{K,-\lambda}
=\Hc^{\star}(\sf)\Ic^{\star}(\sf)\otimes_{\Ic^{\star}(\sf)}\C v_{K,-\lambda}
\simeq \Hc^{\star}(\sf).
$$
By Kostant-Rallis \cite{KR} the right-hand side is $K$-isomorphic to $\C[K/M]$. Since $V_{-\lambda}$ is $K$-isomorphic to $\C[K/M]$ as well, the assertion follows from the finite dimensionality of the $K$-isotypes.
 \end{proof}

\begin{proof}[Proof of Proposition \ref{Prop V_lambda= eigenfunctions}]
By Lemma \ref{Lemma formula for V-lambda}, we have the following equalities of $G$-modules,
\begin{align*}
(V_{\lambda})_{\max}
&=\Hom_{(\gf, K)}\big(V_{-\lambda}, C^{\infty}(G)\big)\\
&=\Hom_{(\gf, K)}\big(\U(\gf)\otimes_{\U(\gf)\kf+\U(\gf)^{K}} \C v_{K,-\lambda}, C^{\infty}(G)\big)\\
&=\Hom_{(\U(\gf)\kf+\U(\gf)^{K},K)}\big(\C v_{K,-\lambda}, C^{\infty}(G)\big)\\
&=\Hom_{\U(\gf)\kf+\U(\gf)^{K}}\big(\C v_{K,-\lambda}, C^{\infty}(G/K)\big)\,.
\end{align*}
The assertion now follows from the definition of $C^{\infty}(G/K)_{\lambda}$.
\end{proof}

\begin{proof}[Proof of Theorem \ref{Thm Helgason conjecture}]
In view of Proposition \ref{Prop V_max =V-omega} and Proposition \ref{Prop V_lambda= eigenfunctions}, both sides of (\ref{eq Helgason conjecture}) are isomorphic to $(V_{\lambda})_{\max}$. Furthermore, as $v_{K,-\lambda}$ is $\U(\gf)$-cyclic, it follows from (\ref{eq Poisson transform as matrix coeff}) that $\cP_{\lambda}$ is injective, and hence bijective, on the space of $K$-finite vectors. The theorem now follows from the functoriality of the maximal globalizations.
\end{proof}

\section{An application to eigenfunctions on $Z=G/K$}\label{Appendix C}

We recall the crown domain $\Xi\subset Z_\C$, the natural $G$-extension of $Z$ inside of $Z_\C$.
Also we recall from the preceding appendix that  $V_{\lambda, {\rm max}} = C^\infty(Z)_\lambda$ for every
spherical principal series $V_\lambda$, $\lambda \in \af_\C^*$.
We mentioned in the introduction that for every $K$-spherical Harish-Chandra module $V$ with
$K$-spherical vector $v_K$ that the orbit map
$$f_v: G/K \to V^\infty, \ \ gK\mapsto g\cdot v_K$$
extends holomorphically to $\Xi$, see \cite[Th.~1.1]{K-S1}. Therefore, every
$\Diff(Z)$-eigenfunction extends holomorphically to $\Xi$, and thus we obtain that $C^\infty(Z)_\lambda= \Oc(\Xi)_\lambda$, i.e.
$$   V_{\lambda, {\rm max}}= \Oc(\Xi)_\lambda$$
by Prop. \ref{Prop V_lambda= eigenfunctions}.
Now for every $r>0$ we define $K$-invariant enlargements of $\Xi$ inside of $Z_\C$ by
$$ Z_\C(r):=  K_\C(r) \cdot \Xi= \exp(i\kf_r) \cdot \Xi \subset Z_\C\, .$$
It is not clear whether $Z_\C(r)$ is simply connected. Out of precaution we
pass to the simply connected cover $\tilde Z_\C(r)$ of $Z_\C(r)$. Note that $K$ acts naturally on the complex manifold
$\tilde Z_\C(r)$.  From the definition of $V_{\lambda,r}^\omega$ and $V_\lambda^\omega\subset V_{\lambda, \rm{max}}=
\Oc(\Xi)_\lambda$ we thus obtain
$$ V_{\lambda,r}^\omega= \Oc(\tilde Z_\C(r))_\lambda\, .$$
Hence the fact that $V_{\lambda, r}^\omega \subset V_{\lambda, \rm{min}}(R)$ for
$\frac{(\log R)^2}{R^2} < cr$ (see Theorem \ref{main theorem})
implies the following

\begin{theorem} \label{Theorem C}Let $-\lambda\in \af_\C^*$ satisfying \eqref{Kos} and $r, R>0$ such that $\frac{(\log R)^2}{R^2} < cr$.  Then any $f \in \Oc(\tilde Z_\C(r))_\lambda$ can be factorized
as
$$ f   =   \psi * \phi_\lambda$$
where $\phi_\lambda$ is the Harish-Chandra spherical function in $C^{\infty}(G/K)_{\lambda}$ and
$\psi \in C_R^\infty(G)$.
\end{theorem}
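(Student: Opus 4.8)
The plan is to deduce it from Theorem \ref{main theorem} by transporting everything through the Poisson transform, using the dictionary set up in the paragraph preceding the statement. Recall from there that, under the hypothesis on $\lambda$, the continuously extended Poisson transform
$$
\cP_\lambda\colon V_{\lambda,r}^\omega\;\xrightarrow{\ \sim\ }\;\Oc(\tilde Z_\C(r))_\lambda
$$
is a $G$-equivariant isomorphism of topological vector spaces, and that by \eqref{eq Poisson transform as matrix coeff} it carries the normalized $K$-fixed vector $v_{K,\lambda}$ to the Harish-Chandra spherical function $\phi_\lambda\in C^\infty(G/K)_\lambda$. Being continuous and $G$-equivariant, $\cP_\lambda$ intertwines the two integrated actions: for $\psi\in C_c^\infty(G)$ and $v\in V_\lambda^\omega$ one has $\cP_\lambda(\psi * v)=\psi * \cP_\lambda(v)$, with the convolution on the right the standard one on $C^\infty(G/K)$ and the support of $\psi * v$ controlled by that of $\psi$ just as on $G$; in particular $\cP_\lambda$ maps $C_R^\infty(G)* v_{K,\lambda}$ onto $C_R^\infty(G)*\phi_\lambda$.

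With this in place the argument is short. By Lemma \ref{lemma quotient} the vector $v_{K,\lambda}$ is $\U(\gf)$-cyclic in $V_\lambda$, so $C_R^\infty(G)* v_{K,\lambda}=(V_\lambda)^{\min}_R$, and Theorem \ref{main theorem} yields a continuous embedding $V_{\lambda,r}^\omega\subset C_R^\infty(G)* v_{K,\lambda}$ whenever $R>R_0$ and $\frac{(\log R)^2}{R^2}<cr$, with $c,R_0$ the constants furnished by that theorem. Now take $f\in\Oc(\tilde Z_\C(r))_\lambda$ and put $v:=\cP_\lambda^{-1}(f)\in V_{\lambda,r}^\omega$. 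By the embedding there is $\psi\in C_R^\infty(G)$ with $v=\psi * v_{K,\lambda}$, and applying $\cP_\lambda$ gives
$$
f=\cP_\lambda(\psi * v_{K,\lambda})=\psi * \cP_\lambda(v_{K,\lambda})=\psi * \phi_\lambda ,
$$
as claimed. Continuity of the factorization, if desired, is inherited from the continuity of $\cP_\lambda^{-1}$ and of the embedding in Theorem \ref{main theorem}.

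I do not expect a genuine obstacle here: all the analytic content is already contained in Theorem \ref{main theorem} and in the Helgason-conjecture identifications of Appendix \ref{Appendix B}, which deliver $\Oc(\Xi)_\lambda=C^\infty(G/K)_\lambda$ and, after passing to the simply-connected cover $\tilde Z_\C(r)$ of $Z_\C(r)=\exp(i\kf_r)\cdot\Xi$, the refined equality $V_{\lambda,r}^\omega=\Oc(\tilde Z_\C(r))_\lambda$. The only steps needing a word of care are (i) that $\cP_\lambda$ restricts to the stated isomorphism at each level $r$ of the analytic filtration, which was recorded in the discussion before the statement, and (ii) the support bookkeeping for convolutions under $\cP_\lambda$, so that the $C_R^\infty(G)$-filtration on $G$ really does correspond to the filtration appearing in the factorization; both follow at once from the $G$-equivariance and continuity of $\cP_\lambda$ and the definition of the integrated action. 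If anything is delicate it is simply making sure the cyclicity invoked in the second paragraph is the version available under the stated hypothesis on $\lambda$ — i.e.\ that the normalization relating $\phi_\lambda$, its eigencharacter, and the parameter satisfying \eqref{Kos} is the one for which $v_{K,\lambda}$ is cyclic in $V_\lambda$; once that is pinned down the proof is complete.
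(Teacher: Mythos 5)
Your proof is correct and follows essentially the same route as the paper: identify $\Oc(\tilde Z_\C(r))_\lambda$ with $V_{\lambda,r}^\omega$ via the identifications set up just before the statement (equivalently, via the Poisson transform and \eqref{eq Poisson transform as matrix coeff}), apply Theorem \ref{main theorem} to obtain $V_{\lambda,r}^\omega\subset C_R^\infty(G)*v_{K,\lambda}$, and transport the factorization by $G$-equivariance and continuity. The one point you flag at the end --- that the stated hypothesis puts \eqref{Kos} on $-\lambda$ while Theorem \ref{main theorem} and the cyclicity of $v_{K,\lambda}$ require \eqref{Kos} for $\lambda$ --- is equally present in the paper's own one-line deduction, so it is not a gap you have introduced.
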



\begin{thebibliography}{99}

\bibitem{AG}  D. N. Akhiezer and S. G. Gindikin, {\it On Stein extensions of real symmetric spaces},
Math. Ann. {\bf 286} (1990), no. {\bf 1-3}, 1--12.

\bibitem{BK} J. Bernstein and B. Kr\"otz, {\it Smooth Fr\'echet globalizations of Harish-Chandra modules},  Israel J. Math. {\bf 199} (2014), no. {\bf 1}, 45--111.

\bibitem{Bourbaki} N. Bourbaki, ``Groupes et Alg\`ebres de Lie - Chapitres 4 \`a 6",  Masson, Paris, 1981.

\bibitem{BHH} D. Burns, S.  Halverscheid and R. Hind, {\it The geometry of Grauert tubes and complexification of symmetric spaces},  Duke Math. J. {\bf 118} (2003), no. 3, 465--491.

\bibitem{C} W. Casselman, {\it Canonical extensions of Harish-Chandra modules to representations of $G$}, Canad. J. Math. {\bf 41} (1989), no. {\bf 3}, 385--438.


\bibitem{GKL} H. Gimperlein, B. Kr\"otz and C. Lienau, {\it Analytic factorization of Lie group representations},
 J. Funct. Anal. {\bf 262} (2012), no. {\bf 2}, 667--681.

\bibitem{Gi} S. Gindikin, {\it Some remarks on complex crowns of Riemannian symmetric spaces},  The 2000 Twente Conference on Lie Groups (Enschede). Acta Appl. Math. {\bf 73} (2002), no. {\bf 1-2}, 95--101.

\bibitem{Gr} A. Grothendieck, ``Topological vector spaces", Gordon and Breech,  1973.

\bibitem{H1} S. Helgason, {\it A duality for symmetric spaces with applications to group representations},
Advances in Math. {\bf 5} (1970), 1--154.

\bibitem{H2} \bysame, {\it The surjectivity of invariant differential operators on symmetric spaces. I},
Ann. of Math. {\bf (2) 98} (1973), 451--479.

\bibitem{H3} \bysame,  ``Geometric Analysis on Symmetric Spaces", Math. Surveys and Monographs {\bf 39},
Amer. Math. Soc., 1994.

\bibitem{K6} M. Kashiwara, A. Kowata, K. Minemura, K. Okamoto, T.  Oshima,  and M.
Tanaka, {\it Eigenfunctions of invariant differential operators on a symmetric space}, Ann. of Math. {\bf (2) 107} (1978), no. {\bf 1}, 1--39.

\bibitem{KashiwaraSchmid}M. ~Kashiwara and W.~Schmid, {\it Quasi-equivariant {${\mathcal D}$}-modules, equivariant derived  category, and representations of reductive {L}ie groups}, in ``Lie theory and geometry", Progr. Math., {\bf 123} (1994), 457--488, Birkh\"{a}user Boston.

\bibitem{Kos} B. Kostant, {\it On the existence and irreducibility of certain series of representations},
Bull. Amer. Math. Soc. {\bf 75} (1969), 627--642.

\bibitem{KR} B. Kostant and S. Rallis, {\it Orbits and representations associated with symmetric spaces},
 Amer. J. Math. {\bf 93} (1971), 753--809.

\bibitem{KO} B. Kr\"otz and E. Opdam, {\it  Analysis on the crown domain},
Geom. Funct. Anal. {\bf 18} (2008), no. {\bf 4}, 1326--1421.

\bibitem{KS} B. Kr\"otz and H. Schlichtkrull, {\it Holomorphic extension of eigenfunctions},
Math. Ann. {\bf 345} (2009), no. {\bf 4}, 835--841.

\bibitem{K-S1} B. Kr\"otz and R. J. Stanton, {\it Holomorphic extensions of representations. II. Geometry and harmonic analysis}, Geom. Funct. Anal. {\bf 15} (2005), no. {\bf 1}, 190--245.

\bibitem{MV} R. Meise and D. Vogt, ``Introduction to Functional Analysis", Oxford Graduate Texts in Mathematics {\bf 2}, 1997.

\bibitem{S} W.~Schmid, {\it Boundary value problems for group invariant differential equations},
The mathematical heritage of \'Elie Cartan (Lyon, 1984).  Ast\'erisque  {\bf 1985},
Numero Hors Serie, 311--321.

\bibitem{W} N. Wallach, {\it Cyclic vectors and irreducibility for principal series representations},
 Trans. Amer. Math. Soc. {\bf 158} (1971), 107--113.

\bibitem{W2} \bysame, ``Real reductive groups. II",
Pure and Applied Mathematics, {\bf 132-II}. Academic Press, Inc., Boston, MA, 1992.
\end{thebibliography}
\end{document}